\numberwithin{equation}{section}
\newfont{\Fr}{eufm10 scaled\magstep1}
\newfont{\Bb}{msbm10 scaled\magstep1}
\def\bkC{\mbox{\Bb \symbol{67}}}
\def\u{{\bf u}}
\def\vv{{\bf v}} % \v in conflict with the hacek symbol
\def\V{{\bf V}}
\def\n{{\bf n}}
\def\grad{\nabla}
\def\div{\grad \cdot}
\def\divh{\grad_h \cdot}
\def\O{{\Omega}}
\def\d{\partial}
\def\dt{\tilde\partial_t}
\def\<{\langle}
\def\>{\rangle}
\def\S{{\bf S}}
\def\r{{\bf r}}
\def\R{\mathbb{R}}
\def\bzeta{\boldsymbol{\zeta}}
\def\bpsi{\boldsymbol{\psi}}
\def\bPi{\boldsymbol{\Pi}}
\def\H{{\bf H}}
\def\L{{\bf L}}
\def\M{{\bf M}}
\def\T{{\cal T}}
\def\P{{\cal P}}
\def\Q{{\cal Q}}
\def\cO{{\cal O}}
\def\I{{\cal I}}
\def\Dt{{\Delta t}}
\def\DT{{\Delta T}}
\def\DG{{\rm DG}}
\newcommand\ie{i.e.}
\newcommand\cf{cf.}
\newcommand\eg{e.g.}
\newtheorem{theorem}{Theorem}[section]
\newtheorem{remark}[theorem]{Remark}
\newtheorem{lemma}[theorem]{Lemma}
\title{A space-time multiscale mortar mixed finite element method for
  parabolic equations\thanks{This project has received partial
    funding from the US National Science Foundation grants DMS 1818775 and DMS 2111129 and the European Research Council (ERC) under the
      European Union's Horizon 2020 research and innovation program
      (grant agreement No 647134 GATIPOR).}}
\author{
  Manu Jayadharan\footnotemark[2]
\and
Michel Kern\footnotemark[3] \footnotemark[4]
\and
Martin Vohral\'ik\footnotemark[3] \footnotemark[4]
\and
Ivan Yotov\footnotemark[2]}
\renewcommand{\thefootnote}{\fnsymbol{footnote}}
\date{\today}
\begin{document}
\footnotetext[2]{Department of Mathematics, University of Pittsburgh, Pittsburgh, PA 15260, USA
  (\href{mailto:manu.jayadharan@pitt.edu}{\texttt{manu.jayadharan@pitt.edu}},
\href{mailto:yotov@math.pitt.edu}{\texttt{yotov@math.pitt.edu}}).}
\footnotetext[3]{Inria, 2 rue Simone Iff, 75589 Paris, France
  (\href{mailto:michel.kern@inria.fr}{\texttt{michel.kern@inria.fr}},
  \href{mailto:martin.vohralik@inria.fr}{\texttt{martin.vohralik@inria.fr}}).}
\footnotetext[4]{CERMICS, Ecole des Ponts, 77455 Marne-la-Vallée, France.}

\renewcommand{\thefootnote}{\arabic{footnote}}
\maketitle

\begin{abstract}
We develop a space-time mortar mixed finite element method for parabolic problems. The domain is decomposed into a union of subdomains discretized with non-matching spatial grids and asynchronous time steps. The method is based on a space-time variational formulation that couples mixed finite elements in space with discontinuous Galerkin in time. Continuity of flux (mass conservation) across space-time interfaces is imposed via a coarse-scale space-time mortar variable that approximates the primary variable. Uniqueness, existence, and stability, as well as a priori error estimates for the spatial and temporal errors are established. A space-time non-overlapping domain
decomposition method is developed that reduces the global problem to a
space-time coarse-scale mortar interface problem. Each interface
iteration involves solving in parallel space-time subdomain problems. The spectral properties of the interface operator and the convergence of the interface iteration are analyzed. Numerical experiments are provided that illustrate the theoretical results and the flexibility of the method for modeling problems with features that are localized in space and time.
\end{abstract}  

%tableofcontents

%%%%%%%%%%%%%%%%%%%%%%%%%%%%%%%%%%%%%%%%%%%%%%%%%%%%
\section{Introduction}
%%%%%%%%%%%%%%%%%%%%%%%%%%%%%%%%%%%%%%%%%%%%%%%%%%%%

The multiscale mortar mixed finite element method of~\cite{ACWY,APWY} allows for highly efficient and accurate discretization of elliptic problems. Let a spatial domain $\Omega$
be given, which is decomposed into subdomains $\O_i$. Then, on each $\O_i$, an individual mesh is set, and
a standard mixed finite element scheme is considered. A stand-alone mortar variable approximating the primary variable is further
introduced on an independent interface mesh, which is typically coarser but where one possibly
employs polynomials of higher degree. It is used to couple the subdomain problems and to
ensure (multiscale) weak continuity of the normal component of the mixed finite element flux
variable over the interfaces between subdomains. Moreover, the mortar variable enables a very efficient parallelization in space via a non-overlapping domain decomposition algorithm based on reduction to an interface problem
\cite{ACWY,APWY,GANIS20093989}.

We introduce here a {\em space-time discretization} of a model parabolic equation, extending the
above philosophy to time-dependent problems. Let a time interval $(0,T)$ be given. For each
subdomain $\O_i$, our approach considers an individual space mesh of $\O_i$ along with individual
time stepping on $(0,T)$. On each {\em space-time subdomain} $\O_i \times (0,T)$, any standard mixed
finite element scheme is combined with the discontinuous Galerkin (DG) time discretization \cite{Thomee}. Then a
stand-alone mortar variable approximating the primary variable is introduced on an independent {\em space-time interface mesh},
which is typically coarse and where higher polynomial degrees may be used. It
is used to couple the space-time subdomain problems and to ensure (multiscale) weak continuity of
the normal component of the mixed finite element flux variable (and consequently mass conservation) over the space-time interfaces.
This setting allows for high flexibility with individual discretizations of each space-time
subdomain $\Omega_i \times (0,T)$, and in particular for {\em local time stepping}. Moreover, {\em space-time parallelization}
  can be achieved via reduction to a space-time interface problem requiring
the solution of discrete problems on the individual space-time subdomains $\Omega_i \times (0,T)$, exchanging space-time boundary data through transmission conditions, in the spirit of space-time domain decomposition methods as in, \eg,
\cite{hoang2013space,hoang-space-time-fracture,GanderKwokMandal,FAUCHER,Yu-space-time,Gander50year,Benes,NakNakTor,HalJapSze,GanHal}.

We mention some of the related previous works on local time stepping for parabolic problems in mixed formulations. The early work \cite{EwiLazVas90} studies finite difference methods on grids with local refinement in space and time. A similar approach is employed in \cite{ADM-LTS} for transport equations. Two space-time domain decomposition methods are considered in \cite{hoang2013space} -- a space-time Steklov--Poincar\'e  operator and optimized Schwarz waveform relaxation (OSWR) \cite{Gander50year} with Robin transmission conditions. Asynchronous time stepping is allowed, but the spatial grids are assumed matching. The focus is on the analysis of the iterative convergence of the OSWR method. A posteriori error estimates for these methods are developed in \cite{Arbogast-aposteriori,Hassan-aposteriori} for nested time grids, with \cite{Arbogast-aposteriori} also allowing for non-matching spatial grids through the use of mortar finite elements. The methods from \cite{hoang2013space} a
 re extended to fracture modeling in \cite{hoang-space-time-fracture}. Overlapping Schwarz domain decomposition with local grid refinement in space and time for two-phase flow in porous media is developed in \cite{Kheriji-near-well}. Domain decomposition methods for mortar mixed finite element methods for parabolic problems with non-matching spatial grids and uniform time stepping are studied in \cite{GaiGloMas,ArshadParkShin}. For parabolic problems in non-mixed form, domain decomposition methods with local time stepping have been studied, e.g., in \cite{Benes,NakNakTor,FAUCHER,Yu-space-time,HalJapSze,GanHal,Krause,DawDuDup,Hager}.
Parallelism in time has also been explored, such as the Parareal
  algorithm~\cite{parareal, GanVan} and multigrid in
  time~\cite{Falgout,GanNeum}. Local time stepping techniques have been developed for multiphysics systems coupled through interface conditions, e.g., for the Stokes--Darcy system \cite{hoang2021global,RybMag}. Finally, we mention some earlier works on space-time methods for parabolic problems coupling mixed finite element discretizations in space with DG in time on a single domain. In \cite{bause2017error}, a method using continuous trial and discontinuous test functions in time is developed. A posteriori error estimation and space-time adaptivity for mixed finite element -- DG methods is studied in \cite{Cascon,KimParkSeo}.

  To the best of the authors' knowledge, the solvability, stability, and a priori error analysis for space-time domain decomposition methods with non-matching spatial grids and asynchronous time stepping have not been studied in the literature, which is the main goal of this paper. A key tool in the analysis is the construction of an interpolant in a space-time weakly continuous velocity space, which is used to prove a discrete divergence inf-sup condition on this space. Another key component is establishing a discrete space-time mortar inf-sup condition under a suitable assumption on the mortar space. In addition to performing complete analysis in the general case, we also consider conforming time discretizations. In this case, we provide stability and error bounds for the velocity divergence and improved error estimates. To the best of our knowledge, such result has not been established in the literature for space-time mixed finite element methods with a DG time discretization, eve
 n on a single domain. Finally, we develop a parallel non-overlapping domain decomposition algorithm for the solution of the resulting algebraic problem. In particular, we utilize a time-dependent Steklov--Poincar\'e operator approach to reduce the global problem to a space-time interface problem. We show that the interface operator is positive definite and analyze its spectral properties. We employ an interface GMRES algorithm, which involves solving in parallel space-time subdomain problems at each iteration. The current iteration value of the mortar variable provides a Dirichlet boundary data on the space-time interfaces for the subdomain problems. We emphasize that, due to the discontinuous time discretization, the space-time subdomain problems are solved using classical time marching over the local time grid. We utilize the spectral bound of the interface operator to obtain an estimate for the number of interface GMRES iterations through field-of-values analysis. 
  
This contribution is organized as follows. In Section~\ref{sec_setting}, we describe the model
problem and its domain decomposition weak formulation. Our space-time multiscale mortar discretization is introduced in
Section~\ref{space-time-mortar}, and we prove its existence, uniqueness, and stability with
respect to data in Section~\ref{sec_WP}. Section~\ref{sec_a_priori} derives a priori error estimates. Control of the velocity divergence and improved error estimates are established in Section~\ref{sec:div}. The reduction to a space-time interface problem and its analysis are presented in Section~\ref{sec_red_int}. We finally present numerical illustrations in Section~\ref{sec_num} and close with conclusions in Section~\ref{sec_concl}.

%%%%%%%%%%%%%%%%%%%%%%%%%%%%%%%%%%%%%%%%%%%%%%%%%%%%
\section{Setting} \label{sec_setting}
%%%%%%%%%%%%%%%%%%%%%%%%%%%%%%%%%%%%%%%%%%%%%%%%%%%%

In this section we introduce the setting for our study.

\subsection{Model problem}

We consider a parabolic partial differential equation in a mixed form,
modeling single phase flow in porous media. Let $\Omega \subset \R^d$,
$d = 2,3$, be a spatial polytopal domain with Lipschitz boundary
and let $T > 0$ be the final time. The governing equations are
\begin{subequations}\label{parabolic}\begin{equation}\label{PDE}
\u = - K \grad p, \quad \frac{\d p}{\d t} + \div \u = q \quad \mbox{in } \Omega \times (0,T],
\end{equation}
where $p$ is the fluid pressure, $\u$ is the Darcy velocity,
$q$ is a source term, and $K$ is a tensor representing the rock
permeability divided by the fluid viscosity. We assume for simplicity
homogeneous Dirichlet boundary condition
\begin{equation}\label{BC}
    p(x,t) = 0 \text{ on } \d \Omega \times (0,T]
\end{equation}
and assign the initial pressure
\begin{equation}\label{IC}
    p(x,0) = p_0(x) \text{ on } \Omega.
\end{equation}\end{subequations}
We assume that $q \in L^2(0,T;L^2(\O))$, $p_0 \in H^1_0(\O)$, $\div K \nabla p_0 \in L^2(\Omega)$,  and that
$K$ is a spatially-dependent, uniformly bounded, symmetric, and
positive definite tensor, \ie, for constants $0 < k_{\min} \le k_{\max}
< \infty$,
\begin{equation}\label{k-spd}
 \text{ for a.e. } x \in \O, \quad k_{\min}\zeta^T \zeta \le \zeta^T K(x) \zeta \le k_{\max}\zeta^T \zeta
  \quad \forall \zeta \in \R^d, \ d = 2,3.
\end{equation}
Moreover, we suppose a scaling such that the diameter of $\O$ and the final time $T$ are of order one.

\subsection{Space-time subdomains}

Let $\Omega$ be a union of non-overlapping
polytopal subdomains with Lipschitz boundary, $\overline\Omega = \cup \overline \Omega_i$. Let
$\Gamma_i = \d\Omega_i\setminus\d\Omega$ be the interior boundary of $\Omega_i$,
let $\Gamma_{ij} = \Gamma_i \cap \Gamma_j$ be the interface between two adjacent
subdomains $\O_i$ and $\O_j$, and let $\Gamma = \cup \Gamma_{ij}$ be
the union of all subdomain interfaces.
We also introduce the space-time counterparts
$\O^T = \O\times (0,T)$, $\O_i^T = \O_i\times (0,T)$, $\Gamma_i^T = \Gamma_i\times (0,T)$,
and $\Gamma_{ij}^T = \Gamma_{ij}\times (0,T)$.
We will introduce space-time domain decomposition discretizations based on $\O_i^T$.

\subsection{Basic notation}

We will utilize the following notation. For a domain $\cO \subset \R^d$, the
$L^2(\cO)$ inner product and norm for scalar and vector-valued functions
are denoted by $(\cdot,\cdot)_{\cO}$ and $\|\cdot\|_{\cO}$,
respectively. The norms and seminorms of the Sobolev spaces
$W^{k,p}(\cO),\, k \in \R, p \ge 1$, are denoted by $\| \cdot \|_{k,p,\cO}$ and
$| \cdot |_{k,p,\cO}$, respectively. The norms and seminorms of the
Hilbert spaces $H^k(\cO)$ are denoted by $\|\cdot\|_{k,\cO}$ and $| \cdot
|_{k,\cO}$, respectively. For
a section of a subdomain boundary $S \subset \R^{d-1}$ we
write $\<\cdot,\cdot\>_S$ and $\|\cdot\|_S$ for the $L^2(S)$ inner
product (or duality pairing) and norm, respectively. By $\M$ we denote the
vectorial counterpart of a generic scalar space $M$.

The above notation is extended
to space-time domains as follows. For $\cO^T = \cO\times(0,T)$ and $S^T = S\times(0,T)$,
let $(\cdot,\cdot)_{\cO^T} = \int_0^T (\cdot,\cdot)_{\cO}$ and
$\<\cdot,\cdot\>_{S^T} = \int_0^T \<\cdot,\cdot\>_S$. For space-time norms we use the standard
Bochner notation. For example, given a spatial norm $\|\cdot\|_V$, we denote, for $p > 0$,
$$
\|\cdot\|_{L^p(0,T;V)} = \left(\int_0^T \|\cdot\|_V^p\right)^{\frac1p}, \quad
\|\cdot\|_{L^\infty(0,T;V)} = \text{ess}\sup \|\cdot\|_V,
$$
with the usual extension for $\|\cdot\|_{W^{k,p}(0,T;V)}$ and $\|\cdot\|_{H^k(0,T;V)}$.
Let $\|\cdot\|_{S^T} = \|\cdot\|_{L^2(0,T;L^2(S))}$. We will use the space
$$
\H({\rm div}; \cO) = \left\{\vv \in \L^2(\cO): \div \vv \in L^2(\cO) \right\},
$$
equipped with the norm
$$
\|\vv\|_{{\rm div};\cO} = \left( \|\vv\|_{\cO}^2 + \|\div \vv\|_{\cO}^2 \right)^{\frac12}.
$$
Finally, throughout the paper, $C$ will denote a generic constant that is independent of the spatial and temporal discretization parameters.

\subsection{Weak formulation}

The weak formulation of problem~\eqref{parabolic} reads: find $(\u, p): [0,T]
\mapsto \H({\rm div}; \O) \times L^2(\O)$
such that $p(x,0) = p_0$ and for a.e. $t \in (0,T)$,
\begin{subequations}\label{weak-omega}\begin{eqnarray}
  &&(K^{-1}\u,\vv)_{\Omega} - (p,\div \vv)_{\Omega} = 0 \quad \forall \, \vv \in \H({\rm div}; \O),
\label{weak-omega.1} \\
&&(\d_t p,w)_{\Omega} + (\div \u, w)_{\Omega} = (q,w)_\Omega
\quad \forall \, w \in L^2(\O). \label{weak-omega.2}
\end{eqnarray}\end{subequations}
The following well-posedness result is rather standard and presented in, \eg,
  \cite[Theorem~2.1]{hoang2013space}.
\begin{theorem}[Well-posedness]\label{cont-existence}
    Problem~\eqref{weak-omega} has
  a unique solution  $\u \in L^2(0,T;\H({\rm div};\O)) \cap L^{\infty}(0,T;\L^2(\Omega))$,
$p \in H^1(0,T;H_0^1(\O))$.
\end{theorem}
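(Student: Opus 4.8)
The plan is to establish well-posedness of the mixed parabolic system \eqref{weak-omega} by reducing it to a standard parabolic problem in primal form and invoking classical linear parabolic theory, or alternatively by a Galerkin/energy argument directly in the mixed setting. I would first observe that \eqref{weak-omega.1} is the weak statement that $\u = -K\grad p$ together with $p \in H^1_0(\O)$: choosing $\vv$ smooth and integrating by parts shows $K^{-1}\u = \grad p$ in a distributional sense and that the boundary trace of $p$ vanishes, so in fact $\div\u = -\div(K\grad p)$ and \eqref{weak-omega.2} becomes the usual weak parabolic equation $(\d_t p, w)_\O + (K\grad p, \grad w)_\O = (q,w)_\O$ for all $w \in H^1_0(\O)$, with $p(0) = p_0$. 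This is a coercive, self-adjoint, time-independent linear parabolic problem to which the standard Lions--Magenes / Galerkin existence theory applies, under the stated hypotheses $q \in L^2(0,T;L^2(\O))$ and $p_0 \in H^1_0(\O)$.

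Next I would extract the claimed regularity. The standard energy estimate (testing with $w = p$) gives $p \in L^\infty(0,T;L^2(\O)) \cap L^2(0,T;H^1_0(\O))$ and $\d_t p \in L^2(0,T;H^{-1}(\O))$. To upgrade to $p \in H^1(0,T;H^1_0(\O))$ one uses the extra hypotheses: since $p_0 \in H^1_0(\O)$ and, by the assumption $\div K\grad p_0 \in L^2(\O)$, the elliptic operator applied to the initial datum lies in $L^2(\O)$, one may test the Galerkin equations with $w = \d_t p_h$ (the discrete time derivative) and integrate in time; the coercivity of $a(p,w) = (K\grad p,\grad w)_\O$ and \eqref{k-spd} yield a uniform bound on $\|\d_t p_h\|_{L^2(0,T;L^2(\O))}$ and on $\|\grad p_h\|_{L^\infty(0,T;L^2(\O))}$, hence in the limit $\d_t p \in L^2(0,T;L^2(\O))$ and $p \in L^\infty(0,T;H^1_0(\O))$, giving $p \in H^1(0,T;H^1_0(\O))$. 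Then $\u = -K\grad p$ inherits $\u \in L^\infty(0,T;\L^2(\O))$ from $k_{\max}$ and, since $\div\u = q - \d_t p \in L^2(0,T;L^2(\O))$, also $\u \in L^2(0,T;\H(\div;\O))$. Uniqueness follows by linearity: the difference of two solutions solves the homogeneous problem with zero data, and the basic energy estimate forces it to vanish.

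The main obstacle is the time-derivative regularity $\d_t p \in L^2(0,T;L^2(\O))$ (equivalently $p \in H^1(0,T;H^1_0(\O))$ via coercivity), which is not part of the baseline weak theory and is exactly where the hypotheses $p_0 \in H^1_0(\O)$ and $\div K\grad p_0 \in L^2(\O)$ are needed. Care must be taken that the Galerkin basis is chosen so that the discrete initial data converge appropriately in $H^1_0(\O)$ (e.g. eigenfunctions of the associated elliptic operator, or a projection that is stable and consistent), so that the estimate obtained by testing with $\d_t p_h$ passes to the limit without a boundary-in-time term blowing up. Everything else — the passage from the mixed form to the primal form, the basic energy estimate, and uniqueness — is routine, and indeed the statement records that this result is "rather standard" and available in \cite[Theorem~2.1]{hoang2013space}, so I would cite that for the full details and only sketch the reduction and the key estimate here.
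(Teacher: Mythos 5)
Your reduction to the primal form and the basic Galerkin/energy machinery are fine, but the final step does not reach the stated conclusion. Testing with $w=\d_t p_h$ (using $p_0\in H^1_0(\O)$) yields $\d_t p\in L^2(0,T;L^2(\O))$ and $\grad p\in L^\infty(0,T;\L^2(\O))$, i.e.\ $p\in H^1(0,T;L^2(\O))\cap L^\infty(0,T;H^1_0(\O))$. This is strictly weaker than the claimed $p\in H^1(0,T;H^1_0(\O))$, which requires $\grad \d_t p\in L^2(0,T;\L^2(\O))$. Your parenthetical ``equivalently $p\in H^1(0,T;H^1_0(\O))$ via coercivity'' is exactly where the argument breaks: coercivity of $(K\grad\cdot,\grad\cdot)_\O$ relates spatial norms at a fixed time; it cannot convert an $L^2(0,T;L^2(\O))$ bound on $\d_t p$ into an $L^2(0,T;H^1_0(\O))$ bound. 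To control $\grad\d_t p$ by energy methods you would have to differentiate the equation in time and test with $\d_t p$, which demands regularity of $\d_t q$ (or of $q$ in space) that is not among the hypotheses, or you must obtain time regularity of the velocity directly in the mixed setting. The remaining deductions ($\u\in L^\infty(0,T;\L^2(\O))$ from $k_{\max}$, $\div\u=q-\d_t p\in L^2(0,T;L^2(\O))$, uniqueness by linearity) are fine once the pressure regularity is in hand.

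For comparison, the paper does not reprove the theorem at all: it cites \cite[Theorem~2.1]{hoang2013space} for the mixed well-posedness and then remarks that the inclusion $p\in H^1(0,T;H^1_0(\O))$ follows from~\eqref{weak-omega.1}, i.e.\ from the a.e.-in-time identity $\grad p=-K^{-1}\u$. In other words, the spatial regularity of $\d_t p$ is inherited from the time regularity of $\u$ established in the mixed analysis (where the compatibility assumptions $p_0\in H^1_0(\O)$ and $\div K\grad p_0\in L^2(\O)$ enter), rather than from the primal test with $\d_t p_h$ that you propose. If you want a self-contained proof, you need to add the step that bounds $\u$ (equivalently $K\grad p$) in $H^1(0,T;\L^2(\O))$ and then transfer this to $\grad p$ via $\grad p=-K^{-1}\u$; otherwise, defer to the cited theorem as the paper does. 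As written, your chain of estimates stops one space short of the asserted $H^1(0,T;H^1_0(\O))$ regularity.
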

We note that in particular the inclusion $p \in
H^1(0,T;H_0^1(\O))$ follows from~\eqref{weak-omega.1},
which implies that for a.e. $t \in (0,T)$, $\grad p = -K^{-1}\u$ in the sense of distributions.

\subsection{Domain decomposition weak formulation}

We now give a domain decomposition weak formulation of~\eqref{weak-omega}.
Introduce the subdomain velocity and pressure spaces
$$
\V_{i} = \H({\rm div}; \O_{i}), \,\, \V = \bigoplus \V_i,  \quad
W_{i} = L^2(\O_{i}), \,\, W = \bigoplus W_i = L^2(\O),
$$
endowed with the norms
$$
\|\vv\|_{\V_i} = \|\vv\|_{{\rm div}; \O_i}, \quad \|\vv\|_{\V}
= \left( \sum_i \|\vv\|_{\V_i}^2 \right)^{\frac12},
\quad \|w\|_W = \|w\|_{\Omega}.
$$
We also introduce the following spatial
bilinear forms, which will prove useful below:
\begin{subequations}\label{forms}\begin{align}
    & a_i(\u,\vv) = (K^{-1}\u,\vv)_{\Omega_i},
    \quad a(\u,\vv) = \sum_i a_i(\u,\vv), \label{forms.a}\\
  & b_i(\vv,w) = - (\div \vv, w)_{\Omega_i}, \quad b(\vv,w) = \sum_i b_i(\vv,w), \label{forms.b}\\
  & b_{\Gamma}(\vv,\mu) = \sum_i \<\vv\cdot\n_i, \mu\>_{\Gamma_i}. \label{forms.bG}
\end{align}\end{subequations}
In addition, for any spatial bilinear form $s(\cdot,\cdot)$, let
$\displaystyle s^T(\cdot,\cdot) = \int_0^T s(\cdot,\cdot)$.

Now, since $p \in H^1(0,T;H_0^1(\O))$, we can consider the trace of
the pressure $p$ on the interfaces, $\lambda = p|_\Gamma$. Thus,
integrating in time, it is easy to see that the solution $(\u,p)$
of~\eqref{weak-omega} satisfies
\begin{subequations}\label{weak}\begin{eqnarray}
&&a^T(\u,\vv) + b^T(\vv,p) + b^T_{\Gamma}(\vv,\lambda) = 0 \quad \forall \, \vv \in L^2(0,T;\V), \label{weak.1} \\
&&(\d_t p,w)_{\Omega^T} - b^T(\u,w) = (q,w)_{\Omega^T}
\quad \forall \, w \in L^2(0,T;W). \label{weak.2}
\end{eqnarray}\end{subequations}

%%%%%%%%%%%%%%%%%%%%%%%%%%%%%%%%%%%%%%%%%%%%%%%%%%%%
\section{Space-time mortar mixed finite element method}\label{space-time-mortar}
%%%%%%%%%%%%%%%%%%%%%%%%%%%%%%%%%%%%%%%%%%%%%%%%%%%%

We consider a space-time discretization of~\eqref{weak}, motivated
by~\cite{hoang2013space}.  It employs a mortar finite element variable to
approximate the pressure trace $\lambda$ from~\eqref{weak} and uses it
as a Lagrange multiplier to impose weakly the continuity of flux
across space-time interfaces.

\subsection{Space-time grids and spaces}\label{sec_grids_spaces}

Let $\T_{h,i}$ be a shape-regular partition of the subdomain
$\Omega_i$ into parallelepipeds or simplices in the sense
  of~\cite{ciarlet}. We stress that this allows for grids that
  do not match along the interfaces $\Gamma_{ij}$ between subdomains $\Omega_i$ and $\Omega_j$. Let $h_i = \max_{E \in \T_{h,i}} \text{diam}\,E$ and $h = \max_i h_i$.
  In some parts of the analysis, we will additionally require $\T_{h,i}$ to be quasi-uniform in that $h_i \le C \text{diam}\,E \ \forall E \in \T_{h,i}$, as well as that the mesh sizes in all subdomains be comparable in that $h \le C h_i \ \forall i$. Similarly, let $\T^\Dt_i: 0 =
t_i^0 < t_i^1 < \cdots < t_i^{N_i} = T$ be a partition of the time
interval $(0,T)$ corresponding to subdomain $\Omega_i$. This means
that we consider different time discretizations on different
subdomains. Let $\Dt_i = \max_{1\le k \le N_i} |t_i^k - t_i^{k-1}|$ and
$\Dt = \max_i \Dt_i$. Though we admit non-uniform time stepping, we will sometimes require that $\T^\Dt_i$ be quasi-uniform in that
$\Dt_i \le C |t_i^k - t_i^{k-1}| \ \forall (t_i^{k-1},t_i^k) \in \T^\Dt_i$, as well as that the time steps in all subdomains be comparable in that
$\Dt \le C \Dt_i \ \forall i$.
\begin{figure}
\centering
\includegraphics[width=3in]{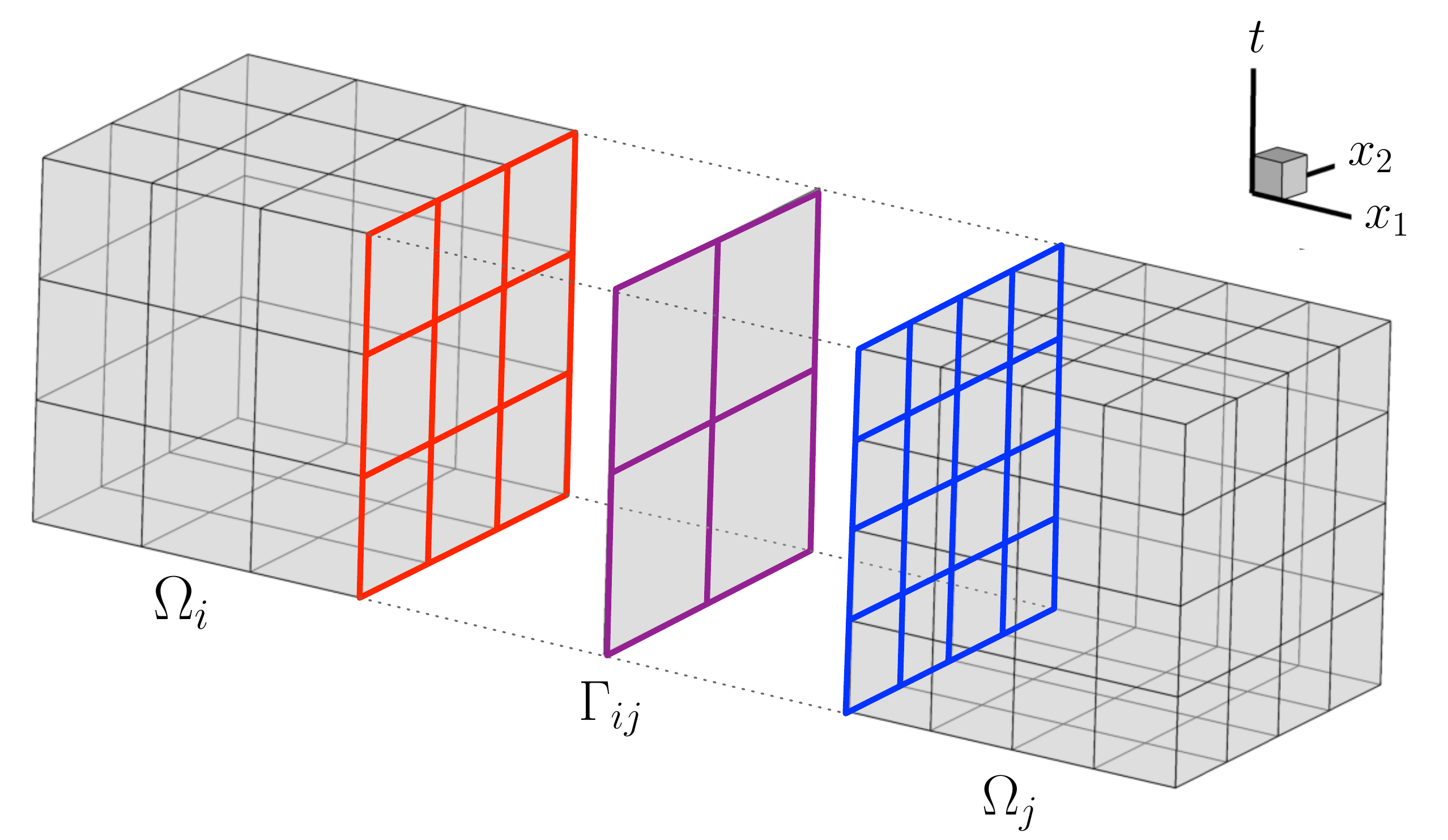}
\caption{Non-matching space-time subdomain and mortar grids in two spatial dimensions.}
\label{fig:spacetime-mortar}
\end{figure}
Composing $\T_{h,i}$ and $\T^\Dt_i$
by tensor product results in a space-time partition
\[
\T^\Dt_{h,i} = \T_{h,i} \times \T^\Dt_i
\]
of the space-time subdomain $\Omega_i^T$. An illustration is given in
Figure~\ref{fig:spacetime-mortar}, where yet a different, mortar space-time grid,
is also shown in the middle.

For discretization in space, we consider any of the inf--sup stable mixed finite element
spaces $\V_{h,i} \times W_{h,i} \subset \V_i \times W_i$ such as the
Raviart--Thomas or the Brezzi--Douglas--Marini spaces, see, \eg,
\cite{BF}. For discretization in time, we will in turn utilize the discontinuous
Galerkin (DG) method, \cf~\cite{Thomee}, which is based on a discontinuous
piecewise polynomial approximation of the solution on the mesh
$\T^\Dt_i$. Denote by $W_i^\Dt$ the subdomain time discretizations of the velocity and pressure. % , respectively. $\V_i^\Dt = [W_i^\Dt]^d$ and 
Composing the space and time discretizations
\[
\V_{h,i}^\Dt = \V_{h,i} \times W_i^\Dt, \quad W_{h,i}^\Dt = W_{h,i} \times W_i^\Dt
\]
results in the space-time mixed finite element spaces $\V_{h,i}^\Dt \times W_{h,i}^\Dt$ in
each space-time subdomain $\Omega_i^T$. We will also need the spatial variable only spaces
\[
    \V_h = \bigoplus \V_{h,i}, \quad W_h = \bigoplus W_{h,i}.
\]

Let $\T_{H,ij}$ be a shape-regular finite element partition of $\Gamma_{ij}$, where
$H = \max_{i,j} \max_{e \in \T_{H,ij}} \text{diam}\, e$, see Figure~\ref{fig:spacetime-mortar},
middle. The use of index $H$ indicates a possibly coarser interface grid compared to the
subdomain grids, resulting in a multiscale approximation. Let
$\T^\DT_{ij}: 0 = t_{ij}^0 < t_{ij}^1 < \cdots <
t_{ij}^{N_{ij}} = T$ be a partition of $(0,T)$ corresponding to
$\Gamma_{ij}$, which may be different from (and again possibly coarser than)
the time-partitions for the neighboring subdomains.
Let $\Delta T = \max_{i,j} \max_{1 \le k \le N_{ij}}|t_{ij}^k - t_{ij}^{k-1}|$.
Composing $\T_{H,ij}$ and $\T^\DT_{ij}$ by tensor product gives a space-time
partition
\[
    \T^\DT_{H,ij} = \T_{H,ij} \times \T^\DT_{ij}
\]
of the space-time interface $\Gamma_{ij}^T$. Finally, let
\[
    \Lambda_{H,ij}^\DT = \Lambda_{H,ij} \times \Lambda_{ij}^\DT
\]
be a space-time mortar finite element
space on $\T^\DT_{H,ij}$ consisting of continuous or
discontinuous piecewise polynomials in space and in time. We will also need the spatial variable only space
\[
    \Lambda_H = \bigoplus \Lambda_{H,ij}.
\]

Finally, the global space-time finite element spaces are defined as
\begin{equation}\label{spaces}
\V_h^\Dt = \bigoplus \V_{h,i}^\Dt, \quad W_h^\Dt =
\bigoplus W_{h,i}^\Dt, \quad \Lambda_H^\DT = \bigoplus
\Lambda_{H,ij}^\DT.
\end{equation}
In particular, the Lagrange multiplier will be sought for in the mortar space $\Lambda_H^\DT$. For the purpose of the analysis, we also define the space of velocities with space-time weakly continuous normal components
\begin{equation}\label{weak-cont}
  \V_{h,0}^\Dt = \left\{\vv \in \V_{h}^\Dt: b^T_{\Gamma}(\vv,\mu) = 0 \quad \forall \, \mu \in
  \Lambda_H^\DT\right\}.
\end{equation}
The discrete velocity and pressure spaces inherit
the norms $\|\cdot\|_{\V}$ and $\|\cdot\|_W$, respectively. The mortar space
is equipped with the spatial norm $\|\mu\|_{\Lambda_H} =
\|\mu\|_{L^2(\Gamma)}$.

\subsection{Space-time multiscale mortar mixed finite element method}\label{sec_MMFE}

For the DG time discretization, we introduce the notation for $\varphi(x,\cdot)$, $\phi(x,\cdot) \in W_i^\Dt$, $x \in \Omega_i$, see~\cite{Thomee},
\begin{equation}\label{dt-DG}
\int_0^T \dt \varphi \, \phi  =
\sum_{k=1}^{N_i} \int_{t_i^{k-1}}^{t_i^k} \d_t \varphi \, \phi +
\sum_{k=1}^{N_i} [\varphi]_{k-1} \, \phi_{k-1}^+,
\end{equation}
where
$[\varphi]_k = \varphi_k^+ - \varphi_k^-$, with $\varphi_k^+ = \lim_{t \to t_i^{k,+}}
\,\varphi$ and $\varphi_k^- = \lim_{t \to t_i^{k,-}} \,\varphi$.

\begin{remark}[Initial value]
In what follows, we will tacitly assume that a function $\varphi(x,\cdot) \in
W_i^\Dt$ has an associated initial value $\varphi_0^-$, which will be
defined if it is explicitly used.
\end{remark}

The space-time multiscale mortar mixed
finite element method for approximating~\eqref{weak}
is: find $\u_h^\Dt \in \V_h^\Dt$, $p_h^\Dt \in
W_h^\Dt$, and $\lambda_H^\DT \in \Lambda_{H}^\DT$
such that
\begin{subequations}\label{method}\begin{align}
  & a^T(\u_h^\Dt,\vv) + b^T(\vv,p_h^\Dt) + b^T_{\Gamma}(\vv,\lambda_H^\DT) = 0
  \quad \forall \, \vv \in \V_h^\Dt, \label{method.1} \\
  & (\dt p_h^\Dt,w)_{\Omega^T} - b^T(\u_h^\Dt,w) = (q,w)_{\Omega^T}
  \quad \forall \, w \in W_h^\Dt, \label{method.2} \\
  & b^T_{\Gamma}(\u_h^\Dt,\mu) = 0 \quad \forall \, \mu \in \Lambda_H^\DT, \label{method.3}
\end{align}\end{subequations}
where the obvious notation $(\dt p_h^\Dt,w)_{\Omega^T} = \sum_i (\dt p_h^\Dt,w)_{\Omega_i^T}$ has been used. We note that
$(\dt p_h^\Dt,w)_{\Omega^T}$ involves the term
$\big((p_{h}^\Dt)_0^+ - (p_{h}^\Dt)_0^-,w_0^+\big)_{\Omega}$, see the last term in
\eqref{dt-DG} for $k=1$. Here,
$(p_{h}^\Dt)_0^+ $ is computed by the method, while $(p_{h}^\Dt)_0^-$
is determined by the initial condition.
We discuss the construction of initial data in Section~\ref{sec:init}.

The above method provides
a highly general and flexible framework, allowing for different
spatial and temporal discretizations in different subdomains.  We note
that according to~\eqref{method.3}, continuity of the flux is imposed
weakly on the space-time interfaces $\Gamma_{ij}^T$, requiring that
the jump in flux is orthogonal to the space-time mortar space
$\Lambda_{H,ij}^\DT$. This formulation results in a correct
notion of mass conservation across interfaces for time-dependent
domain decomposition problems with non-matching grids in both space
and time. In the case of discontinuous mortars, \eqref{method.3}
implies that the total flux across any space-time interface cell $e \times (t_{ij}^{k-1},t_{ij}^{k})$, $e \in \T_{H,ij}$, is
continuous.

%%%%%%%%%%%%%%%%%%%%%%%%%%%%%%%%%%%%%%%%%%%%%%%%%%%%
\section{Well-posedness analysis}\label{sec_WP}
%%%%%%%%%%%%%%%%%%%%%%%%%%%%%%%%%%%%%%%%%%%%%%%%%%%%

In this section we analyze the existence, uniqueness, and stability of
the solution to~\eqref{method}.

\subsection{Space-time interpolants}\label{interpolants}

We will make use of several space-time interpolants. Let $\P_{h,i}$ be the $L^2$-orthogonal projection onto $W_{h,i}$ and let $\P_i^\Dt$ be the $L^2$-orthogonal projection onto $W_i^\Dt$. We then define the $L^2$-orthogonal projection in space and time on subdomain $\Omega_i$ by
\[
    \P_{h,i}^\Dt = \P_{h,i} \times \P_i^\Dt: L^2(0,T;L^2(\Omega_i)) \to
    W_{h,i}^\Dt
\]
and globally by
\[
    \P_h^\Dt: L^2(0,T;L^2(\Omega)) \to W_{h}^\Dt, \quad \P_h^\Dt|_{\Omega_i} = \P_{h,i}^\Dt.
\]
Setting $\P_h|_{\Omega_i} = \P_{h,i}$ and $\P^\Dt|_{\Omega_i} = \P_i^\Dt$, we will also write $\P_h^\Dt = \P_h \times \P^\Dt$. Since $\div \V_{h,i} = W_{h,i}$, we have, for all $\varphi \in L^2(0,T;L^2(\Omega_i))$,
\begin{equation}\label{div-orth}
  (\P_h^\Dt \varphi - \varphi,\div \vv)_{\O_i^T} = 0 \quad \forall \, \vv \in \V_{h,i}^\Dt.
\end{equation}
For $\epsilon > 0$, denote $\H^\epsilon({\rm div};\Omega_i):= \H^\epsilon(\Omega_i) \cap \H({\rm div};\Omega_i)$. Let $\bPi_{h,i}: \H^\epsilon({\rm div};\Omega_i) \to \V_{h,i}$ be the canonical mixed interpolant \cite{BF} and let
\[
    \bPi_{h,i}^\Dt = \bPi_{h,i} \times \P_i^\Dt: L^2(0,T;\H^\epsilon({\rm div};\Omega_i)) \to \V_{h,i}^\Dt.
\]
In particular, this space-time interpolant satisfies,
for all $\bpsi \in L^2(0,T;\H^\epsilon({\rm div};\Omega_i))$,
\begin{subequations}\begin{align}
  & (\div (\bPi_{h,i}^\Dt \bpsi - \bpsi),w)_{\Omega_i^T}
  = 0 \quad \forall \, w \in W_{h,i}^\Dt, \label{pi-div} \\
  & \< (\bPi_{h,i}^\Dt \bpsi
  - \bpsi)\cdot\n_i,\vv\cdot\n_i\>_{\d\Omega_i^T} = 0 \quad \forall \, \vv \in \V_{h,i}^\Dt, \label{pi-normal} \\
  & \|\bPi_{h,i}^\Dt \bpsi\|_{L^2(0,T;\V_{i})} \le C
  (\|\bpsi\|_{L^2(0,T; \H^\epsilon(\Omega_i))}
  + \|\div \bpsi\|_{L^2(0,T;L^2(\Omega_i))}). \label{pi-cont}
\end{align}\end{subequations}
Let $\Q_{h,i}: L^2(\d\Omega_i) \to \V_{h,i}\cdot\n_i$ be the $L^2$-orthogonal projection and let
\begin{equation}\label{eq_Qhidt}
    \Q_{h,i}^\Dt = \Q_{h,i} \times \P_i^\Dt: L^2(0,T;L^2(\d\Omega_i)) \to \V_{h,i}^\Dt\cdot\n_i.
\end{equation}
Finally, let $\P_{H,\Gamma_{ij}}: L^2(\Gamma_{ij}) \to \Lambda_{H,ij}$ and $\P_{ij}^\DT: L^2(0,T) \to \Lambda_{ij}^\DT$
be the $L^2$-orthogonal projections and let
\begin{equation}\label{eq_PHG}
    \P_{H,\Gamma_{ij}}^\DT = \P_{H,\Gamma_{ij}} \times \P_{ij}^\DT: L^2(0,T; L^2(\Gamma_{ij})) \to \Lambda_{H,ij}^\DT, \quad \P_{H,\Gamma}^\DT|_{\Gamma_{ij}} = \P_{H,\Gamma_{ij}}^\DT
\end{equation}
be the mortar space-time $L^2$-orthogonal projection.

\subsection{Assumptions on the mortar grids}

We make the
following assumptions on the mortar grids, which are needed to guarantee
that the method~\eqref{method} is well posed:
there exists a positive constant $C$ independent of the spatial mesh sizes $h$ and $H$ (as well as of the temporal mesh sizes $\Dt$ and $\Delta T$) such that
\begin{subequations}\label{mortar-ass}\begin{align}
  & \forall \, \mu \in \Lambda_H, \ \forall \, i,j, \quad
  \|\mu\|_{\Gamma_{ij}}
  \le C (\|\Q_{h,i} \, \mu\|_{\Gamma_{ij}} + \|\Q_{h,j} \, \mu\|_{\Gamma_{ij}}),
  \label{mortar-assumption-space} \\
  & \forall \, i,j, \quad \Lambda_{ij}^\DT \subset W_i^\Dt \cap W_j^\Dt.
    \label{mortar-assumption-time}
    %  & \forall \, \mu \in \Lambda_{ij}^\DT, \forall \, i,j, \quad \|\mu\|_{L^2(0,T)}
%  \le C (\|\P_i^\Dt \, \mu\|_{L^2(0,T)} +
%  \|\P_j^\Dt \, \mu\|_{L^2(0,T)}).
  \end{align}\end{subequations}

The spatial mortar assumption~\eqref{mortar-assumption-space} is the same as
the assumption made in~\cite{ACWY,APWY}. Note that it is in particular satisfied with $C = \frac12$ when $\T_{H,ij}$ is a coarsening of both $\T_{h,i}$ and $\T_{h,j}$ on the interface $\Gamma_{ij}$ and the space $\Lambda_{H,ij}$ consists of discontinuous piecewise polynomials contained in $\V_{h,i} \cdot\n_i$ and $\V_{h,j} \cdot\n_j$ on $\Gamma_{ij}$. In general, it requires that the mortar space $\Lambda_H$ is
sufficiently coarse, so that it is controlled by the normal traces of the neighboring subdomain velocity spaces.

The temporal mortar assumption~\eqref{mortar-assumption-time}
similarly provides control of the mortar time discretization by the subdomain time
discretizations. It requires that each subdomain time
discretization be a refinement of the mortar time discretization. We also note that
\eqref{mortar-assumption-space} and~\eqref{mortar-assumption-time} imply
\begin{equation}\label{mortar-assumption}
  \forall \, \mu \in \Lambda_H^\DT, \forall \, i,j, \quad
  \|\mu\|_{L^2(0,T;L^2(\Gamma_{ij}))}
  \le C (\|\Q_{h,i}^\Dt \, \mu\|_{L^2(0,T;L^2(\Gamma_{ij}))} +
  \|\Q_{h,j}^\Dt \, \mu\|_{L^2(0,T;L^2(\Gamma_{ij}))})
\end{equation}
for a constant $C$ independent of $h$, $H$, $\Dt$, and $\Delta T$.

\subsection{Discrete inf--sup conditions}

Recall the form $b^T(\cdot,\cdot)$ from~\eqref{forms.b}. Under the above assumptions on the mortar grids, the weakly continuous velocity space $\V_{h,0}^\Dt$ of~\eqref{weak-cont} satisfies the following inf--sup condition.

\begin{lemma}[Discrete divergence inf--sup condition on $\V_{h,0}^\Dt$]\label{lem_inf--sup-weak-cont}
Let~\eqref{mortar-ass} hold. Then there exists a constant $\beta > 0$, independent of $h$, $H$, $\Dt$, and $\Delta T$, such that
\begin{equation}\label{inf--sup-weak-cont}
\forall \, w \in W_h^\Dt, \quad \sup_{0 \ne \vv \in \V_{h,0}^\Dt}
\frac{b^T(\vv,w)}
{\|\vv\|_{L^2(0,T;\V)}} \ge \beta \|w\|_{L^2(0,T;L^2(\O))}.
\end{equation}
\end{lemma}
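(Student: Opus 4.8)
The plan is to prove the inf--sup condition by constructing, for a given $w \in W_h^\Dt$, an explicit velocity $\vv \in \V_{h,0}^\Dt$ that realizes the bound. The natural starting point is the continuous inf--sup (Nečas) inequality: solve an auxiliary problem $\div \bpsi = -w$ on $\Omega^T$ with, say, $\bpsi = -\grad \phi$ where $-\Delta \phi = w$ pointwise in time with homogeneous Dirichlet data on $\partial\Omega$, extended by zero outside each time slab so that the time regularity is inherited from $w \in W_h^\Dt \subset L^2(0,T;L^2(\Omega))$. This gives $\bpsi \in L^2(0,T;\H^1(\Omega)) \hookrightarrow L^2(0,T;\H^\epsilon(\div;\Omega_i))$ on each subdomain, with $\|\bpsi\|_{L^2(0,T;\H^1(\Omega))} + \|\div\bpsi\|_{L^2(0,T;L^2(\Omega))} \le C\|w\|_{L^2(0,T;L^2(\Omega))}$. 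Then I would apply the space-time mixed interpolant $\bPi_h^\Dt$ subdomain-by-subdomain and correct it to land in the weakly continuous space $\V_{h,0}^\Dt$.

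The key steps, in order, are as follows. First, set $\tilde\vv = \bPi_h^\Dt \bpsi$; by the commuting diagram property \eqref{pi-div}, $(\div\tilde\vv, w)_{\Omega_i^T} = (\div\bpsi, w)_{\Omega_i^T} = -\|w\|_{\Omega_i^T}^2$ after summing, since $w \in W_{h,i}^\Dt$; and by \eqref{pi-cont}, $\|\tilde\vv\|_{L^2(0,T;\V)} \le C\|w\|_{L^2(0,T;L^2(\Omega))}$. However $\tilde\vv$ need not satisfy $b^T_\Gamma(\tilde\vv,\mu)=0$. Second, I would add a subdomain-local correction $\bzeta_i \in \V_{h,i}^\Dt$ built via the normal-trace projection $\Q_{h,i}^\Dt$ so as to kill the interface jump: for each interface $\Gamma_{ij}$ the jump $[\tilde\vv\cdot\n]$ tested against $\Lambda_H^\DT$ must be cancelled. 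Following the elliptic argument of \cite{ACWY,APWY}, one uses a subdomain Neumann-type problem (or directly the surjectivity of $\vv \mapsto \vv\cdot\n_i$ from $\V_{h,i}$ onto $\V_{h,i}\cdot\n_i$, realized with bounded right inverse) to produce $\bzeta_i$ with prescribed normal trace equal to $-\tfrac12 \Q_{h,i}^\Dt$ of the relevant mortar quantity, and $\div\bzeta_i = 0$ (or controlled), and $\|\bzeta_i\|$ bounded by the interface data. Third — and this is where the mortar assumption \eqref{mortar-assumption} enters crucially — one shows the correction is bounded: the mortar inf--sup-type bound \eqref{mortar-assumption} lets one control $\|\mu\|_{L^2(0,T;L^2(\Gamma_{ij}))}$ for the mortar projection $\mu = \P_{H,\Gamma}^\DT(\tilde\vv\cdot\n)$ by the subdomain normal-trace projections $\|\Q_{h,i}^\Dt\mu\|$, which in turn are bounded by $\|\tilde\vv\cdot\n_i\|_{\Gamma_i^T}$ and hence, via a discrete trace/scaling inequality on $\V_{h,i}$, by $\|\tilde\vv\|_{L^2(0,T;\V_i)}$. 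Because of the tensor-product structure $\Q_{h,i}^\Dt = \Q_{h,i}\times\P_i^\Dt$ and $\P_{H,\Gamma}^\DT = \P_{H,\Gamma}\times\P^\DT$, the time projections commute cleanly and the whole argument reduces to the spatial one slab-by-slab, then integrated in time. Setting $\vv = \tilde\vv + \sum_i \bzeta_i \in \V_{h,0}^\Dt$, one gets $b^T(\vv,w) = -\|w\|_{\Omega^T}^2$ (the correction contributes $(\div\bzeta_i,w)$, handled by taking $\div\bzeta_i=0$ or absorbing it) and $\|\vv\|_{L^2(0,T;\V)} \le C\|w\|_{\Omega^T}$, giving $\beta = 1/C$.

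The main obstacle I anticipate is step two: constructing the subdomain correction $\bzeta_i$ with the correct weakly-continuous-enforcing normal trace while keeping its divergence under control and its norm bounded independently of $h,H,\Dt,\DT$. In the elliptic case this is the heart of \cite{ACWY}; here one must additionally check that the DG-in-time structure does not spoil anything, which is where \eqref{mortar-assumption-time} ($\Lambda_{ij}^\DT \subset W_i^\Dt \cap W_j^\Dt$) is essential — it guarantees that $\P_i^\Dt$ acts as the identity on mortar-time functions, so the time projections in $\Q_{h,i}^\Dt$ and $\P_{H,\Gamma}^\DT$ are mutually compatible and the correction's time dependence stays inside $W_i^\Dt$. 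A secondary technical point is ensuring $\div\bzeta_i$ is either zero or $L^2$-orthogonal to $W_{h,i}^\Dt$ so it does not perturb $b^T(\vv,w)$; this can be arranged because the normal trace we prescribe on $\partial\Omega_i$ can be chosen to have zero mean on each mesh face compatible with $\div\V_{h,i}=W_{h,i}$, or by a standard splitting into a mean-zero part plus a divergence-correcting part each of which is controlled.
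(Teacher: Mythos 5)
Your overall plan is viable and, at bottom, it is the same mathematics as the paper's proof, just unrolled: the paper does not reconstruct the weakly continuous interpolant in space-time at all. It takes the spatial interpolant $\bPi_{h,0}:\H^{\frac12+\epsilon}({\rm div};\Omega)\to\V_{h,0}$ of \cite{ACWY,APWY} (precisely the ``heart of ACWY'' you defer to), tensors it with the temporal $L^2$-projection, $\bPi_{h,0}^\Dt=\bPi_{h,0}\times\P^\Dt$, checks via the time-mortar inclusion \eqref{mortar-assumption-time} that this product maps into $\V_{h,0}^\Dt$ (the time projection drops out when testing against $\mu\in\Lambda_H^\DT$ because $\Lambda_{ij}^\DT\subset W_i^\Dt$), and then invokes Fortin's lemma together with the continuous inf--sup condition. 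You correctly identify the role of \eqref{mortar-assumption-time}, and your auxiliary-problem-plus-interpolant-plus-correction construction is exactly what is inside the cited spatial result; so the difference is mainly that you re-derive in space-time what the paper imports as a black box and then tensorizes, which is more work and more places to go wrong.

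Two of those places are genuine problems as you have written them. First, in your third step you bound the correction data by $\|\tilde\vv\cdot\n_i\|_{\Gamma_i^T}$ and then ``via a discrete trace/scaling inequality on $\V_{h,i}$'' by $\|\tilde\vv\|_{L^2(0,T;\V_i)}$. The discrete trace inequality \eqref{trace} carries a factor $h_i^{-\frac12}$, so this route yields an inf--sup constant of order $h^{\frac12}$, which defeats the entire point of the lemma (uniformity in $h$, $H$, $\Dt$, $\DT$). The correct route avoids taking traces of the discrete function: by \eqref{pi-normal} one has $\bPi_{h,i}^\Dt\bpsi\cdot\n_i=\Q_{h,i}^\Dt(\bpsi\cdot\n_i)$, and the $L^2$-stability of $\Q_{h,i}^\Dt$ plus the \emph{continuous} trace bound for $\bpsi$ give a bound by $\|\bpsi\|_{L^2(0,T;\H^{\frac12+\epsilon}(\Omega_i))}+\|\div\bpsi\|_{\Omega_i^T}\le C\|w\|_{\Omega^T}$, uniformly in the mesh parameters; this is how the $h$-independent bound \eqref{pi-h0-cont} is obtained in \cite{ACWY,APWY}. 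Second, and related, your claimed regularity $\bpsi\in L^2(0,T;\H^1(\Omega))$ for the auxiliary Dirichlet problem requires convexity; on a general Lipschitz polytope only $\H^{\frac12+\epsilon}$ regularity is available. That weaker regularity suffices (it is exactly what the domain $\H^{\frac12+\epsilon}({\rm div};\Omega)$ of $\bPi_{h,0}$ demands, and it is enough for the normal traces of $\bpsi$ to lie in $L^2$ of the interfaces), but you need it stated at that level for the trace argument above to be the one you actually use.
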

\begin{proof}
  Let $\V_{h,0} = \left\{\vv \in \V_{h}: b_\Gamma(\vv,\mu) = 0 \quad
\forall \, \mu \in \Lambda_H\right\}$.
It is shown in~\cite{ACWY,APWY} that if~\eqref{mortar-assumption-space} holds, then
there is an interpolant
$\bPi_{h,0}: \H^{\frac12+\epsilon}({\rm div};\Omega) \to \V_{h,0}$ such that,
for all $\bpsi \in \H^{\frac12+\epsilon}({\rm div};\Omega)$,
\begin{subequations}\begin{align}
  & \sum_i (\div (\bPi_{h,0} \bpsi - \bpsi),w)_{\Omega_i}
  = 0 \quad \forall \, w \in W_{h}, \label{pi-h0-div} \\
  & \|\bPi_{h,0} \bpsi\|_{\V} \le C (\|\bpsi\|_{\H^{\frac12+\epsilon}(\Omega)}
  + \|\div \bpsi\|_{L^2(\Omega)}), \label{pi-h0-cont}
\end{align}\end{subequations}
for a constant $C$ independent of $h$ and $H$. Define
\[
    \bPi_{h,0}^\Dt = \bPi_{h,0} \times \P^\Dt.
\]
We claim that $\bPi_{h,0}^\Dt: L^2(0,T;\H^{\frac12+\epsilon}({\rm div};\Omega)) \to \V_{h,0}^\Dt$. To see this, note first that, for all functions
$\bpsi \in L^2(0,T;\H^{\frac12+\epsilon}({\rm div};\Omega))$, clearly $\bPi_{h,0}^\Dt\bpsi \in \V_h^\Dt$. Thus~\eqref{mortar-assumption-time} implies
\begin{align*}
b^T_{\Gamma}(\bPi_{h,0}^\Dt\bpsi,\mu) & =
\sum_i \int_0^T\<\bPi_{h,0}^\Dt\bpsi \cdot\n_i, \mu\>_{\Gamma_i} =
\sum_i \int_0^T\<\bPi_{h,0}\bpsi \cdot\n_i, \mu\>_{\Gamma_i} \\
& = \int_0^T b_\Gamma(\bPi_{h,0}\bpsi,\mu) = 0
\quad \forall \, \mu \in \Lambda_H^\DT,
\end{align*}
i.e., indeed $\bPi_{h,0}^\Dt\bpsi \in \V_{h,0}^\Dt$ by virtue of~\eqref{weak-cont}.
Moreover, \eqref{pi-h0-div} and~\eqref{pi-h0-cont} imply
\begin{subequations}\begin{align}
& \sum_i (\div (\bPi_{h,0}^\Dt \bpsi - \bpsi),w)_{\Omega_i^T}
  = 0 \quad \forall \, w \in W_{h}^\Dt, \label{pi-0-div} \\
& \|\bPi_{h,0}^\Dt \bpsi\|_{L^2(0,T;\V)} \le C
  (\|\bpsi\|_{L^2(0,T; \H^{\frac12+\epsilon}(\Omega))}
  + \|\div \bpsi\|_{L^2(0,T;L^2(\Omega))}). \label{pi-0-cont}
\end{align}\end{subequations}
The inf--sup condition~\eqref{inf--sup-weak-cont} then follows from the classical
continuous inf--sup condition for $b^T(\cdot,\cdot)$, the existence of the interpolant
$\bPi_{h,0}^\Dt$, and Fortin's lemma~\cite{BF}.
\end{proof}

To control the mortar variable, we need the following mortar inf--sup condition.

\begin{lemma}[Discrete mortar inf--sup condition on $\V_{h}^\Dt$]\label{lem:inf--sup} Let~\eqref{mortar-assumption} hold. Then there exists a constant $\beta_\Gamma > 0$, independent of $h$, $H$, $\Dt$, and $\Delta T$, such that
\begin{equation}\label{inf--sup}
\forall \, \mu \in \Lambda_H^\DT, \quad \sup_{0 \ne \vv \in \V_h^\Dt}
\frac{b^T_\Gamma(\vv,\mu)}
{\|\vv\|_{L^2(0,T;\V)}} \ge \beta_\Gamma \|\mu\|_{L^2(0,T;L^2(\Gamma))}.
\end{equation}
\end{lemma}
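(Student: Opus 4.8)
The plan is to establish the mortar inf--sup condition \eqref{inf--sup} by constructing, for each fixed $\mu \in \Lambda_H^\DT$, a test velocity $\vv \in \V_h^\Dt$ whose normal trace recovers $\mu$ in an $L^2$-controlled way, exactly mimicking the elliptic argument of~\cite{ACWY,APWY} but now time-slice by time-slice. Since the method decouples in time for the purpose of this estimate, the natural approach is: first work at the spatial level, and then lift to space-time via the tensor-product structure of the spaces and the temporal mortar assumption.

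First I would recall from~\cite{ACWY,APWY} that the spatial mortar assumption~\eqref{mortar-assumption-space} yields a bounded right-inverse of the normal-trace pairing: for each $\mu \in \Lambda_H$ there exists $\vv_\mu \in \V_h$ with $b_\Gamma(\vv_\mu,\mu) \ge c\,\|\mu\|_{L^2(\Gamma)}^2$ and $\|\vv_\mu\|_{\V} \le C\,\|\mu\|_{L^2(\Gamma)}$, with constants independent of $h$ and $H$; concretely, one takes $\vv_\mu$ to be the projection $\Q_{h,i}\mu$ of $\mu$ into $\V_{h,i}\cdot\n_i$ on each $\Gamma_i$, extended into the subdomain interior with controlled $\H({\rm div})$ norm. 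Next, given $\mu \in \Lambda_H^\DT$, I would apply this spatial construction pointwise in time, i.e.\ set $\vv(\cdot,t) := \vv_{\mu(\cdot,t)}$. The key point, which uses the temporal mortar assumption~\eqref{mortar-assumption-time}, is that this $\vv$ lies in $\V_h^\Dt = \bigoplus \V_{h,i}\times W_i^\Dt$: since $\Lambda_{ij}^\DT \subset W_i^\Dt \cap W_j^\Dt$, the time-dependence of $\mu$ on $\Gamma_{ij}$ is already a piecewise polynomial of the subdomain temporal mesh, and the spatial interpolation $\mu \mapsto \Q_{h,i}\mu$ is linear and acts only in space, so it preserves the temporal polynomial structure; equivalently, $\Q_{h,i}^\Dt$ from~\eqref{eq_Qhidt} equals $\Q_{h,i}\times\P_i^\Dt$ and $\P_i^\Dt\mu = \mu$ on the relevant interface. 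Integrating the two spatial bounds over $(0,T)$ then gives $b^T_\Gamma(\vv,\mu) \ge c\,\|\mu\|_{L^2(0,T;L^2(\Gamma))}^2$ and $\|\vv\|_{L^2(0,T;\V)} \le C\,\|\mu\|_{L^2(0,T;L^2(\Gamma))}$, from which \eqref{inf--sup} follows with $\beta_\Gamma = c/C$.

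The main obstacle, and the step requiring the most care, is verifying that the fiberwise-constructed $\vv$ genuinely belongs to the discrete space-time velocity space $\V_h^\Dt$ and is measurable/integrable in time with the claimed norm. This is where assumption~\eqref{mortar-assumption-time} is essential: without it, the interface temporal mesh could be finer than a subdomain mesh, and $\Q_{h,i}\mu(\cdot,t)$ would not be a polynomial on the subdomain time intervals, so $\vv \notin \V_h^\Dt$. The cleanest way to handle this rigorously is to avoid the pointwise-in-$t$ language altogether and instead define $\vv$ directly on the space-time subdomain: on each $\Gamma_i^T$ prescribe its normal trace as $\Q_{h,i}^\Dt\mu = \Q_{h,i}^\Dt(\P_{H,\Gamma}^\DT$-represented $\mu)$, observe via~\eqref{mortar-assumption-time} that $\P_i^\Dt$ acts as the identity on $\Lambda_{ij}^\DT$ so that $\Q_{h,i}^\Dt\mu$ coincides with the tensor action $(\Q_{h,i}\times\mathrm{id})\mu$, and then extend into $\O_i^T$ using the space-time interpolant machinery of Section~\ref{interpolants} exactly as the elliptic extension is done in~\cite{ACWY,APWY}. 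The duality pairing $b^T_\Gamma(\vv,\mu) = \sum_i \int_0^T \langle \Q_{h,i}^\Dt\mu\cdot\n_i,\mu\rangle_{\Gamma_i}$ is then bounded below using~\eqref{mortar-assumption} together with the identity $\langle \Q_{h,i}^\Dt\mu,\mu\rangle = \|\Q_{h,i}^\Dt\mu\|^2$ (orthogonality of the projection) and a Cauchy--Schwarz/Young argument, precisely as in the $H$-independent elliptic proof; the extension bound gives the denominator control. I would therefore present the proof in that order: (i) recall the spatial right-inverse and its $H$-independent bounds from~\cite{ACWY,APWY}; (ii) tensor it with $\P^\Dt$ and invoke~\eqref{mortar-assumption-time} to land in $\V_h^\Dt$; (iii) use~\eqref{mortar-assumption} and projection orthogonality to get the lower bound; (iv) conclude \eqref{inf--sup}.
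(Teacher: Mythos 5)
Your proposal is correct and is essentially the paper's own argument: the paper likewise builds the test function by prescribing the discrete normal trace $\Q_{h,i}^\Dt\,\mu$ on each $\d\Omega_i^T$ (realized through an auxiliary Neumann problem, elliptic regularity, and the space-time mixed interpolant $\bPi_{h,i}^\Dt$, cf.~\eqref{pi-normal}--\eqref{pi-cont} and~\eqref{ell-reg}), obtains the numerator from projection orthogonality plus~\eqref{mortar-assumption} as in~\eqref{inf--sup-1}, and the denominator from the extension bound~\eqref{inf--sup-2}. The only superfluous step is your appeal to~\eqref{mortar-assumption-time} and the identity $\P_i^\Dt\mu=\mu$: since the time projection is already built into $\Q_{h,i}^\Dt$ in~\eqref{eq_Qhidt}, membership of the constructed velocity in $\V_h^\Dt$ is automatic and the lower bound needs only the stated hypothesis~\eqref{mortar-assumption}, which is why the paper can state the lemma under~\eqref{mortar-assumption} alone.
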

\begin{proof}
  Let $\mu \in \Lambda_H^\DT$ be given. In the
  following we assume that $\mu$ is extended by zero on $\d\Omega$.
  We consider a set of auxiliary subdomain problems.
Let $\varphi_i(x,t)$ be the solution for a.e.
$t \in (0,T)$ of the problem
  \begin{subequations}\label{aux-problem}\begin{align}
    & \div \grad \varphi_i (\cdot,t) =  \overline{(\Q_{h,i}^\Dt \,  \mu) (\cdot,t)} \quad \mbox{ in } \Omega_i,
    \label{aux-problem-1}\\
    & \grad \varphi_i (\cdot,t) \cdot \n_i = (\Q_{h,i}^\Dt \, \mu) (\cdot,t) \quad \mbox{ on }
    \d\Omega_i, \label{aux-problem-2}
  \end{align}\end{subequations}
where $\overline{\Q_{h,i}^\Dt \, \mu}$ denotes the mean value
of $\Q_{h,i}^\Dt \, \mu$ on $\d\Omega_i$.  Let $\bpsi_i = \grad \varphi_i$.  Elliptic regularity~\cite{lions2011non,grisvard2011elliptic} implies that for a.e. $t \in
(0,T)$,
\begin{equation}\label{ell-reg}
    \|\bpsi_i\|_{\frac12,\Omega_i} + \|\div \bpsi_i\|_{\Omega_i}
    \le C \|\Q_{h,i}^\Dt \, \mu\|_{\d\Omega_i}.
\end{equation}
Let $\vv_i = \bPi_{h,i}^\Dt \bpsi_i \in \V_{h,i}^\Dt$. Note that~\eqref{pi-normal} together with~\eqref{eq_Qhidt} and~\eqref{aux-problem-2} imply that $\vv_i \cdot \n_i = \Q_{h,i}^\Dt \, \mu$ on $\d\Omega_i$. Thus,
using definition~\eqref{forms.bG} of $b^T_\Gamma$, the fact that $\mu$ is extended by zero on $\d\Omega_i \setminus \Gamma_i$, and definition~\eqref{eq_Qhidt} of the projection $\Q_{h,i}^\Dt$, we have
\begin{equation}\label{inf--sup-1}\begin{split}
  b^T_\Gamma(\vv,\mu) & =
   \sum_i \<\bPi_{h,i}^\Dt \bpsi_i\cdot\n_i, \mu\>_{\Gamma_i^T}
   = \sum_i \<\bPi_{h,i}^\Dt \bpsi_i\cdot\n_i, \mu\>_{\d\Omega_i^T}
   = \sum_i \<\bPi_{h,i}^\Dt \bpsi_i\cdot\n_i, \Q_{h,i}^\Dt \, \mu\>_{\d\Omega_i^T} \\
   & = \sum_i\|\Q_{h,i}^\Dt \, \mu\|_{L^2(0,T;L^2(\d\Omega_i))}^2
   \ge C  \sum_i \|\mu\|_{L^2(0,T;L^2(\Gamma_i))}^2,
\end{split}\end{equation}
  where we used~\eqref{mortar-assumption} in the inequality. On the other hand,
  \eqref{pi-cont} with $\epsilon = \frac12$ and~\eqref{ell-reg}, along with
  the stability of $L^2$-orthogonal projection $Q_{h,i}^\Dt$, imply
\begin{equation}\label{inf--sup-2}
    \|\vv_i\|_{L^2(0,T;\V_{i})}
    \le C \|\mu\|_{L^2(0,T;L^2(\Gamma_i))}.
\end{equation}
The assertion of the lemma follows from combining~\eqref{inf--sup-1} and~\eqref{inf--sup-2}.
\end{proof}

\subsection{Initial data}\label{sec:init}
We next discuss the construction of discrete initial data for all variables. The data need to be compatible in the sense that they satisfy the equations without time derivatives in the method, \eqref{method.1} and \eqref{method.3}. Recall that we are given initial pressure datum $p(0) = p_0 \in H^1_0(\O)$ with $\div K \nabla p_0 \in L^2(\Omega)$. Let us define $\u_0 = -K \nabla p_0$ and
$\lambda_0 = p_0|_{\Gamma}$. Then the solution to \eqref{weak} satisfies
$\u(0) = \u_0$ and $\lambda(0) = \lambda_0$. Moreover, we have
\begin{align*}
  & a(\u_0,\vv) + b(\vv,p_0) + b_\Gamma(\vv,\lambda_0) = 0
  \quad \forall \, \vv \in \V_h, \\
  & b_\Gamma(\u_0,\mu) = 0 \quad \quad \forall \, \mu \in \Lambda_H.
\end{align*}
Next, define the discrete initial data $(\u_{h,0},p_{h,0},\lambda_{H,0}) \in
\V_h \times W_h \times \Lambda_H$ as the elliptic projection of
$(\u_{0},p_{0},\lambda_{0})$, i.e., the unique solution to the problem
\begin{subequations}\label{init}\begin{align}
    & a(\u_{h,0},\vv) + b(\vv,p_{h,0}) + b_{\Gamma}(\vv,\lambda_{H,0})
= a(\u_{0},\vv) + b(\vv,p_{0}) + b_{\Gamma}(\vv,\lambda_{0})
    = 0
  \quad \forall \, \vv \in \V_h, \label{init.1} \\
  & b(\u_{h,0},w) = b(\u_{0},w) = -(\div K \nabla p_0,w)
  \quad \forall \, w \in W_h, \label{init.2} \\
  & b_{\Gamma}(\u_{h,0},\mu) = b_{\Gamma}(\u_{0},\mu) = 0 \quad \forall \, \mu \in \Lambda_H. \label{init.3}
\end{align}\end{subequations}
The well-posedness of \eqref{init} is shown in \cite{ACWY,APWY} under the spatial mortar assumption \eqref{mortar-assumption-space}. In particular, it follows from the analysis in \cite{ACWY,APWY} that
\begin{align}
  & \|\u_{h,0}\|_{\V} + \|p_{h,0}\|_W + \|\lambda_{H,0}\|_{\Lambda_H}
  \le C \|\div K \nabla p_0\|_{\Omega}, \label{init-stab} \\
  & \|\u_0 - \u_{h,0}\|_{\V} + \|p_0 - p_{h,0}\|_W
  + \|\lambda_0 - \lambda_{H,0}\|_{\Lambda_H} \nonumber \\
  & \qquad \le C(\|\u_0 - \bPi_{h,0}\u_0\|_{\V}
  + \|p_0 - \P_h p_{0}\|_W + \|\lambda_0 - \P_{H,\Gamma}\lambda_{0}\|_{\Lambda_H}).
  \label{init-error}
\end{align}
We now set
\begin{equation}\label{init-data}
  (p_{h}^\Dt)_0^- = p_{h,0}, \quad (\u_{h}^\Dt)_0^- = \u_{h,0}, \quad
  (\lambda_{H}^\DT)_0^- = \lambda_{h,0}.
\end{equation}
As we noted earlier, $(p_{h}^\Dt)_0^-$ provides initial condition for the method \eqref{method}, cf. \eqref{method.2}. The data $(\u_{h}^\Dt)_0^-$ and $(\lambda_{H}^\DT)_0^-$ are not needed in the method, but it will be utilized in the analysis of $\div \u_{h}^\Dt$ in Section~\ref{sec:div}.

\subsection{Existence, uniqueness, and stability with respect to data}

In the analysis we will utilize the following auxiliary result.

\begin{lemma}[Summation in time]\label{lem:dt-positive}
For all $\Omega_i$ and for any $\phi(x,\cdot) \in W_i^\Dt$, $x \in \Omega_i$, there holds
\begin{equation}\label{dt-positive}
\int_0^T \dt \varphi \, \varphi = \frac12 \left((\varphi_{N_i}^-)^2
  - (\varphi_0^-)^2 \right)
+ \frac12\sum_{k=1}^{N_i}([\varphi]_{k-1})^2.
\end{equation}
\end{lemma}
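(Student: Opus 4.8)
The plan is to substitute $\phi = \varphi$ directly into the definition~\eqref{dt-DG} of the discrete time derivative and then simplify the two resulting sums separately. This reduces the lemma to elementary one-dimensional calculus on each time subinterval plus a telescoping argument at the interior nodes.

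First I would treat the cell integrals. On each subinterval $(t_i^{k-1},t_i^k)$ the function $\varphi(x,\cdot)$ is a polynomial in $t$, hence smooth, so $\d_t\varphi\,\varphi = \tfrac12\,\d_t(\varphi^2)$ and the fundamental theorem of calculus gives $\int_{t_i^{k-1}}^{t_i^k}\d_t\varphi\,\varphi = \tfrac12\big((\varphi_k^-)^2 - (\varphi_{k-1}^+)^2\big)$. Summing over $k=1,\dots,N_i$ yields $\tfrac12\sum_{k=1}^{N_i}\big((\varphi_k^-)^2 - (\varphi_{k-1}^+)^2\big)$.

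Next I would handle the jump terms using the elementary identity $(a-b)\,a = \tfrac12(a^2-b^2) + \tfrac12(a-b)^2$, applied with $a = \varphi_{k-1}^+$ and $b = \varphi_{k-1}^-$, which gives $[\varphi]_{k-1}\,\varphi_{k-1}^+ = \tfrac12\big((\varphi_{k-1}^+)^2 - (\varphi_{k-1}^-)^2\big) + \tfrac12([\varphi]_{k-1})^2$. Summing this over $k$ and adding it to the sum from the previous step, the terms $(\varphi_{k-1}^+)^2$ cancel, leaving $\tfrac12\sum_{k=1}^{N_i}\big((\varphi_k^-)^2 - (\varphi_{k-1}^-)^2\big) + \tfrac12\sum_{k=1}^{N_i}([\varphi]_{k-1})^2$. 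The first sum telescopes to $\tfrac12\big((\varphi_{N_i}^-)^2 - (\varphi_0^-)^2\big)$, which is precisely~\eqref{dt-positive}.

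There is no genuine obstacle here; the only thing that needs a modicum of care is the bookkeeping of the one-sided limits $\varphi_k^\pm$ at the interior nodes so that the cancellation and telescoping are correctly aligned, together with the convention (recorded in the Remark on the initial value) that $\varphi_0^-$ denotes the prescribed initial datum entering the $k=1$ jump term.
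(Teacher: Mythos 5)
Your proof is correct and follows essentially the same route as the paper: the cell integrals are evaluated via $\d_t\varphi\,\varphi = \tfrac12\d_t(\varphi^2)$, the jump terms are expanded with the identity $(a-b)a = \tfrac12(a^2-b^2)+\tfrac12(a-b)^2$, and the resulting sum telescopes to \eqref{dt-positive}. No further comment is needed.
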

\begin{proof}
Using the definition~\eqref{dt-DG} of $\dt w$, we have
\begin{align*}
\int_0^T \dt \varphi \, \varphi & = \sum_{k=1}^{N_i} \int_{t_i^{k-1}}^{t_i^k}
\frac12\frac{\d}{\d t} \varphi^2
+ \sum_{k=1}^{N_i} [\varphi]_{k-1} \varphi_{k-1}^+ \\
& = \frac12 \sum_{k=1}^{N_i}\left((\varphi_k^-)^2 - (\varphi_{k-1}^+)^2
+ (\varphi_{k-1}^+)^2 - (\varphi_{k-1}^-)^2
+ (\varphi_{k-1}^+ - \varphi_{k-1}^-)^2 \right) \\
& = \frac12 \left((\varphi_{N_i}^-)^2
  - (\varphi_0^-)^2 \right)
+ \frac12\sum_{k=1}^{N_i}(\varphi_{k-1}^+ - \varphi_{k-1}^-)^2.
\end{align*}
\end{proof}

To simplify the presentation, we introduce the notation
\begin{equation}\label{eq_DG}
\|\varphi\|_{\DG}^2 = \sum_i \big(\|\varphi_{N_i}^-\|_{\Omega_i}^2
  + \sum_{k=1}^{N_i}\|[\varphi]_{k-1}\|_{\Omega_i}^2 \big).
\end{equation}

\begin{theorem}[Existence and uniqueness of the discrete solution, stability with respect to data]\label{well-posed}
Assume that conditions~\eqref{mortar-ass} hold. Then
the space-time mortar method~\eqref{method} has a
unique solution. Moreover, for some constant $C > 0$ independent of $h$, $H$, $\Dt$, and $\Delta T$,
 \begin{align}
  \|p_h^\Dt\|_\DG
  + \|\u_h^\Dt\|_{\Omega^T} + \|p_h^\Dt\|_{\Omega^T}
  + \|\lambda_H^\DT\|_{\Gamma^T}
  \le C(\|q\|_{\Omega^T} + \|\div K \nabla p_{0}\|_{\O}). \label{stability}
 \end{align}
\end{theorem}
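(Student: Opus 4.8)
The plan is to establish existence and uniqueness first by a dimension-counting/linear-algebra argument, then prove the stability bound by the standard energy method adapted to the DG-in-time, mortar-in-space setting.

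\textbf{Existence and uniqueness.} Since \eqref{method} is a finite-dimensional square linear system (the unknowns $(\u_h^\Dt, p_h^\Dt, \lambda_H^\DT)$ are tested against the same spaces), it suffices to show that the homogeneous problem (with $q = 0$ and zero initial data) has only the trivial solution; the stability estimate \eqref{stability} then automatically gives uniqueness and hence existence. So I would in fact fold uniqueness into the stability proof: run the energy argument below with $q=0$ and $(p_h^\Dt)_0^- = 0$, conclude $p_h^\Dt = 0$ in the $\|\cdot\|_\DG$ and $\|\cdot\|_{\Omega^T}$ norms, then use \eqref{method.1} with the mortar inf--sup condition (Lemma~\ref{lem:inf--sup}) to get $\lambda_H^\DT = 0$, and finally the divergence inf--sup condition on $\V_{h,0}^\Dt$ (Lemma~\ref{lem_inf--sup-weak-cont}) together with $\div$-control to get $\u_h^\Dt = 0$. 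Actually, once $p_h^\Dt = 0$, \eqref{method.1} reads $a^T(\u_h^\Dt,\vv) + b_\Gamma^T(\vv,\lambda_H^\DT) = 0$ for all $\vv \in \V_h^\Dt$; testing with $\vv = \u_h^\Dt \in \V_{h,0}^\Dt$ (using \eqref{method.3}) kills the $b_\Gamma^T$ term and gives $a^T(\u_h^\Dt,\u_h^\Dt) = 0$, hence $\u_h^\Dt = 0$ by coercivity \eqref{k-spd}, and then $\lambda_H^\DT = 0$ by Lemma~\ref{lem:inf--sup}.

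\textbf{Stability, energy estimate.} The core step: take $\vv = \u_h^\Dt$ in \eqref{method.1} and $w = p_h^\Dt$ in \eqref{method.2}, then add. The $b^T(\u_h^\Dt, p_h^\Dt)$ terms cancel, and the $b_\Gamma^T(\u_h^\Dt, \lambda_H^\DT)$ term vanishes by \eqref{method.3} since $\lambda_H^\DT \in \Lambda_H^\DT$. This yields
\[
a^T(\u_h^\Dt,\u_h^\Dt) + (\dt p_h^\Dt, p_h^\Dt)_{\Omega^T} = (q, p_h^\Dt)_{\Omega^T}.
\]
Now apply Lemma~\ref{lem:dt-positive} subdomain-by-subdomain to the time-derivative term: it equals $\tfrac12 \|p_h^\Dt\|_\DG^2 - \tfrac12 \sum_i \|(p_h^\Dt)_0^-\|_{\Omega_i}^2$ in the notation \eqref{eq_DG}, i.e.\ a genuine positive contribution modulo the known initial data $p_{h,0}$. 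Combined with the coercivity bound $a^T(\u_h^\Dt,\u_h^\Dt) \ge k_{\max}^{-1}\|\u_h^\Dt\|_{\Omega^T}^2$ and Cauchy--Schwarz plus Young on the right-hand side, this controls $\|\u_h^\Dt\|_{\Omega^T}$ and $\|p_h^\Dt\|_\DG$ in terms of $\|q\|_{\Omega^T}$ and $\|p_{h,0}\|_\Omega$; the latter is bounded by $C\|\div K\nabla p_0\|_\Omega$ via \eqref{init-stab}. To control $\|p_h^\Dt\|_{\Omega^T}$ (the full $L^2(0,T;L^2(\Omega))$ norm, not just the DG jump/endpoint norm) I would use the divergence inf--sup condition \eqref{inf--sup-weak-cont}: pick $\vv \in \V_{h,0}^\Dt$ realizing the supremum for $w = p_h^\Dt$, test \eqref{method.1} with it (the $b_\Gamma^T$ term drops since $\vv \in \V_{h,0}^\Dt$), so $\beta\|p_h^\Dt\|_{\Omega^T} \le b^T(\vv,p_h^\Dt)/\|\vv\|_{L^2(0,T;\V)} = -a^T(\u_h^\Dt,\vv)/\|\vv\|_{L^2(0,T;\V)} \le k_{\min}^{-1}\|\u_h^\Dt\|_{\Omega^T}$, already controlled. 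Finally $\|\lambda_H^\DT\|_{\Gamma^T}$ comes from the mortar inf--sup \eqref{inf--sup}: for $\mu = \lambda_H^\DT$ choose $\vv \in \V_h^\Dt$ attaining the sup, test \eqref{method.1}, so $\beta_\Gamma\|\lambda_H^\DT\|_{\Gamma^T} \le (b^T(\vv,p_h^\Dt) + a^T(\u_h^\Dt,\vv))/\|\vv\|_{L^2(0,T;\V)} \le C(\|p_h^\Dt\|_{\Omega^T} + \|\u_h^\Dt\|_{\Omega^T})$, again controlled. Summing the four estimates gives \eqref{stability}.

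\textbf{Main obstacle.} The delicate point is not the energy identity itself but verifying that all the pieces are legitimate: that $p_h^\Dt$ is an admissible test function in \eqref{method.2} including the correct interpretation of the jump term at $t=0$ (where $(p_h^\Dt)_0^-$ is the prescribed initial datum, not a free variable), so that Lemma~\ref{lem:dt-positive} genuinely applies with $\varphi_0^- = p_{h,0}$; and that the two inf--sup constants $\beta$, $\beta_\Gamma$ from Lemmas~\ref{lem_inf--sup-weak-cont} and~\ref{lem:inf--sup} are uniform in all four discretization parameters, which is exactly what those lemmas assert under \eqref{mortar-ass}. A secondary technical point is handling the right-hand side $(q,p_h^\Dt)_{\Omega^T}$: since we only control $\|p_h^\Dt\|_\DG$ from the energy identity and not directly $\|p_h^\Dt\|_{\Omega^T}$, one must first absorb it using a Young's inequality against the DG norm (noting $\|p_h^\Dt\|_{\Omega^T}$ does not appear on the left of the raw identity), which is why the inf--sup step for the full $L^2$-in-time pressure norm must come \emph{after} the basic energy estimate rather than be intertwined with it — a bootstrapping order that needs to be set up carefully but presents no real difficulty.
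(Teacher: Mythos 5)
Your proposal follows essentially the same route as the paper: test with $\vv=\u_h^\Dt$, $w=p_h^\Dt$, $\mu=\lambda_H^\DT$, apply Lemma~\ref{lem:dt-positive} with $\varphi_0^-=p_{h,0}$, use Young's inequality on $(q,p_h^\Dt)_{\Omega^T}$, then bound $\|p_h^\Dt\|_{\Omega^T}$ via the inf--sup condition \eqref{inf--sup-weak-cont} applied to \eqref{method.1} and $\|\lambda_H^\DT\|_{\Gamma^T}$ via \eqref{inf--sup}, control $\|p_{h,0}\|_\Omega$ by \eqref{init-stab}, and deduce existence and uniqueness from the stability of the square linear system with $q=0$, $p_0=0$.

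One correction to your closing remarks: you cannot absorb the term $\tfrac{\epsilon}{2}\|p_h^\Dt\|_{\Omega^T}^2$ \emph{against the DG norm}, since $\|\cdot\|_{\DG}$ only controls the final values and the jumps, not the $L^2(0,T;L^2(\Omega))$ norm of a piecewise polynomial in time. The correct (and, in your own inequalities, available) mechanism is the one the paper uses: first derive $\|p_h^\Dt\|_{\Omega^T}\le C\|K^{-\frac12}\u_h^\Dt\|_{\Omega^T}$ from \eqref{inf--sup-weak-cont} and \eqref{method.1}, then insert this into the Young term and absorb it into $\|K^{-\frac12}\u_h^\Dt\|_{\Omega^T}^2$ on the left by taking $\epsilon$ small; so the pressure inf--sup bound is in fact intertwined with the choice of $\epsilon$ rather than a pure postprocessing step. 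With that fix the argument closes exactly as in the paper.
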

\begin{proof}
We begin with establishing the stability bound~\eqref{stability}. Taking $\vv = \u_h^\Dt$, $w = p_h^\Dt$, and $\mu = \lambda_H^\DT$ in~\eqref{method} and combining the equations, we obtain, using~\eqref{dt-positive} and Young's inequality,
\begin{align*}
    \frac12 \sum_i \Big(\|(p_h^\Dt)_{N_i}^- \|_{\Omega_i}^2
  + \sum_{k=1}^{N_i}\|[p_h^\Dt]_{k-1}\|_{\Omega_i}^2 \Big)
  + \|K^{-\frac12}\u_h^\Dt\|_{\Omega^T}^2
  \le \frac{\epsilon}{2} \|p_h^\Dt\|_{\Omega^T}^2
  + \frac{1}{2\epsilon} \|q\|_{\Omega^T}^2 + \frac12\|p_{h,0}\|_\Omega^2.
\end{align*}
The inf--sup condition for the weakly continuous velocity~\eqref{inf--sup-weak-cont} and~\eqref{method.1} imply
  \begin{equation*}
\|p_h^\Dt\|_{\Omega^T} \le C \|K^{-\frac12}\u_h^\Dt\|_{\Omega^T}.
  \end{equation*}
Furthermore, the mortar inf--sup condition~\eqref{inf--sup} and~\eqref{method.1}
imply
  \begin{equation*}
    \|\lambda_H^\DT\|_{\Gamma^T} \le C(\|K^{-\frac12}\u_h^\Dt\|_{\Omega^T}
    + \|p_h^\Dt\|_{\Omega^T}).
  \end{equation*}
Combining the above three inequalities, taking $\epsilon$ sufficiently small, and using~\eqref{k-spd} and \eqref{init-stab}, we obtain~\eqref{stability}.
 The existence and uniqueness of a
 solution follows from~\eqref{stability} by taking $q = 0$ and $p_0 = 0$.
\end{proof}

\begin{remark}[Control of divergence]
Control on $\|\div \u_h^\Dt\|_{L^2(0,T;L^2(\O_i))}$ can be established under the assumption of matching time steps between subdomains and choosing the mortar finite element space in time to match the subdomains. We present this result later in Section~\ref{sec:div}, along with improved error estimates.
\end{remark}

%%%%%%%%%%%%%%%%%%%%%%%%%%%%%%%%%%%%%%%%%%%%%%%%%%%%
\section{A priori error analysis}\label{sec_a_priori}
%%%%%%%%%%%%%%%%%%%%%%%%%%%%%%%%%%%%%%%%%%%%%%%%%%%%

In this section we derive a priori error estimates for the solution of
the space-time mortar MFE method~\eqref{method}.

\subsection{Approximation properties of the space-time interpolants}

Assume that the spaces $\V_h^\Dt$ and $W_h^\Dt$ from~\eqref{spaces}
contain on each space-time element polynomials in $P_k$ and $P_l$, respectively, in space and $P_q$ in time, where $P_r$ denotes the space of polynomials of degree up to $r$.
Let $\Lambda_H^\DT$ contain on each space-time mortar element polynomials in $P_m$
in space and $P_s$ in time. We have
the following approximation properties for the space-time interpolants $\P_h^\Dt$ and $\P_{H,\Gamma}^\DT$ of Section~\ref{interpolants} and $\bPi_{h,0}^\Dt$ of the proof of Lemma~\ref{lem_inf--sup-weak-cont}:
\begin{subequations}\label{approx}\begin{align}
  & \|\bpsi - \bPi_{h,0}^\Dt\bpsi\|_{\Omega^T} \le C
  \sum_i \|\bpsi\|_{H^{r_q}(0,T;\H^{r_k}(\Omega_i))}(h^{r_k} + \Dt^{r_q})
  + C \|\bpsi\|_{H^{r_q}(0,T;\H^{\tilde r_k+\frac12}(\Omega))}(h^{\tilde r_k}H^{\frac12} + \Dt^{r_q}), \nonumber \\
  & \qquad\qquad\qquad\qquad\qquad\qquad
  0 < r_k \le k+1, \quad 0 < \tilde r_k \le k+1, \quad 0 \le r_q \le q+1, \label{pi-0-approx} \\
  & \|\varphi - \P_h^\Dt \varphi\|_{\Omega_i^T}
  \le C \|\varphi\|_{H^{r_q}(0,T;H^{r_l}(\Omega_i))}(h^{r_l} + \Dt^{r_q}),
  \quad 0 \le r_l \le l+1, \quad 0 \le r_q \le q+1, \label{p-h-approx}\\
  & \|\varphi - \P_{H,\Gamma}^\DT\varphi\|_{\Gamma_{ij}^T} \le C \|\varphi\|_{H^{r_s}(0,T;H^{r_m}(\Gamma_{ij}))}
  (H^{r_m} + \DT^{r_s}), \quad 0 \le r_m \le m+1, \quad 0 \le r_s \le s+1. \label{p-gamma-approx}
\end{align}\end{subequations}
Bound~\eqref{pi-0-approx} follows from the approximation properties of $\bPi_{h,0}$ obtained
in~\cite{ACWY,APWY}. Bounds~\eqref{p-h-approx} and~\eqref{p-gamma-approx} are standard
approximation properties of the $L^2$ projection~\cite{ciarlet}.

In the analysis we will also use the following approximation property, which follows from the stability of the $L^2$ projection in $L^\infty$~\cite{crouzeix1987stability}:
\begin{align}\label{p-h-approx-infty}
\|\varphi - \P_h^\Dt \varphi\|_{L^\infty(0,T;L^2(\Omega_i))}
  \le C \|\varphi\|_{W^{r_q,\infty}(0,T;H^{r_l}(\Omega_i))}(h^{r_l} + \Dt^{r_q}),
  \quad 0 \le r_l \le l+1, \quad 0 \le r_q \le q+1.
\end{align}
We also recall the well-known discrete trace (inverse) inequality for a quasi-uniform mesh $\T_{h,i}$: for all $\vv \in \V_{h,i}$, $\|\vv\cdot\n_i\|_{\Gamma_i} \le C h_i^{-\frac12}\|\vv\|_{\Omega_i}$. This implies
\begin{equation}\label{trace}
  \forall \, \vv \in \V_{h,i}^\Dt, \quad
  \|\vv\cdot\n_i\|_{\Gamma_i^T} \le C h_i^{-\frac12}\|\vv\|_{\Omega_i^T}.
\end{equation}

\subsection{A priori error estimate}

We proceed with the error estimate for the space-time mortar MFE
method~\eqref{method}.

\begin{theorem}[A priori error estimate]\label{conv-thm}
  Assume that conditions~\eqref{mortar-ass} hold
  and that the solution to~\eqref{weak} is sufficiently smooth. Let the space and time meshes $\T_{h,i}$ and $\T^\Dt_i$ be quasi-uniform, as well as $h \le C h_i$ and $\Dt \le C \Dt_i \ \forall i$. Then
  there exists a constant $C > 0$ independent of the mesh sizes $h$, $H$, $\Dt$, and $\Delta T$, such that the solution to the space-time mortar MFE method~\eqref{method} satisfies
  \begin{equation}\label{error-estimate} \begin{split}
    & \|p - p_h^\Dt\|_\DG + \|\u - \u_h^\Dt\|_{\Omega^T} + \|p - p_h^\Dt\|_{\Omega^T} +
    \|\lambda - \lambda_H^\DT\|_{\Gamma^T} \\
    & \quad \le C \Big(
    \sum_i \|\u\|_{H^{r_q}(0,T;\H^{r_k}(\Omega_i))}(h^{r_k} + \Dt^{r_q})
    + \|\u\|_{H^{r_q}(0,T;\H^{\tilde r_k+\frac12}(\Omega))}(h^{\tilde r_k}H^{\frac12} + \Dt^{r_q})\\
    & \qquad + \sum_i \|p\|_{W^{r_q,\infty}(0,T;H^{r_l}(\Omega_i))}
    \Delta t^{-\frac12} (h^{r_l} + \Dt^{r_q})
    + \sum_{i,j} \|\lambda\|_{H^{r_s}(0,T;H^{r_m}(\Gamma_{ij}))} h^{-\frac12}(H^{r_m} + \DT^{r_s})  \Big),   \\
    & \ 0 < r_k \le k+1, \ 0 < \tilde r_k \le k+1, \ 0 \le r_q \le q+1,
    \ 0 \le r_l \le l+1, \ 0 \le r_m \le m+1, \ 0 \le r_s \le s+1.
 \end{split}\end{equation}
  \end{theorem}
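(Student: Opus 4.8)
The plan is to follow the classical mixed-method error-analysis route, adapted to the space-time mortar setting, using the interpolants from Section~\ref{interpolants} together with the two inf--sup conditions of Lemmas~\ref{lem_inf--sup-weak-cont} and~\ref{lem:inf--sup}. First I would split each error into an interpolation part and a discrete part: write $\u - \u_h^\Dt = (\u - \bPi_{h,0}^\Dt \u) + (\bPi_{h,0}^\Dt \u - \u_h^\Dt) =: \bchi_{\u} + \phi_{\u}$, and similarly $p - p_h^\Dt = (p - \P_h^\Dt p) + (\P_h^\Dt p - p_h^\Dt) =: \bchi_p + \phi_p$ and $\lambda - \lambda_H^\DT = (\lambda - \P_{H,\Gamma}^\DT \lambda) + (\P_{H,\Gamma}^\DT \lambda - \lambda_H^\DT) =: \bchi_\lambda + \phi_\lambda$. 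The interpolation parts $\bchi_\bullet$ are controlled directly by the approximation estimates~\eqref{approx} and~\eqref{p-h-approx-infty}; the work is to bound the discrete parts $\phi_\bullet \in \V_{h,0}^\Dt \times W_h^\Dt \times \Lambda_H^\DT$ (note $\phi_\u \in \V_{h,0}^\Dt$ since both $\bPi_{h,0}^\Dt\u$ and $\u_h^\Dt$ lie in the weakly continuous space, using~\eqref{method.3} and the defining property of $\bPi_{h,0}^\Dt$).

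Next I would derive the error equations. Subtracting the method~\eqref{method} from the continuous weak formulation~\eqref{weak} and using orthogonality properties of the interpolants --- in particular~\eqref{div-orth} and~\eqref{pi-0-div} which kill the divergence-type interpolation errors against discrete test functions --- yields, for all $\vv \in \V_{h,0}^\Dt$ and $w \in W_h^\Dt$,
\begin{subequations}\begin{align}
& a^T(\phi_\u, \vv) + b^T(\vv, \phi_p) = -a^T(\bchi_\u, \vv), \label{erreq1}\\
& (\dt \phi_p, w)_{\Omega^T} - b^T(\phi_\u, w) = -(\dt \bchi_p, w)_{\Omega^T}. \label{erreq2}
\end{align}\end{subequations}
Here the $b^T_\Gamma$ terms drop out when testing with $\vv \in \V_{h,0}^\Dt$ and $\mu \in \Lambda_H^\DT$, which is exactly why one tests in the weakly continuous space. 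Now take $\vv = \phi_\u$, $w = \phi_p$, add the two equations, and apply Lemma~\ref{lem:dt-positive} to the time-derivative term; the jump/endpoint terms on the left are nonnegative, so one obtains
$$
\tfrac12 \|\phi_p\|_\DG^2 + \|K^{-\frac12}\phi_\u\|_{\Omega^T}^2 \le \tfrac12 \|(\phi_p)_0^-\|_\Omega^2 - a^T(\bchi_\u,\phi_\u) - (\dt \bchi_p, \phi_p)_{\Omega^T}.
$$
The term $(\phi_p)_0^-$ is the initial-data error $p_0 - p_{h,0}$, controlled by~\eqref{init-error}. The term $a^T(\bchi_\u,\phi_\u)$ is handled by Cauchy--Schwarz and Young. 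The term $(\dt \bchi_p,\phi_p)_{\Omega^T}$ is the delicate one: using that $\P_i^\Dt$ commutes appropriately with $\dt$ on the DG time mesh, one rewrites $\dt \bchi_p$ acting against $\phi_p \in W_i^\Dt$ --- since $\bchi_p = p - \P_h^\Dt p$ and $\P^\Dt$ is the $L^2$-projection in time, the smooth-in-time part of $\dt \bchi_p$ is orthogonal to $W_i^\Dt$, leaving only jump terms $[\bchi_p]_{k-1}$; these are then estimated in $L^\infty$-in-time of the spatial $L^2$-norm via~\eqref{p-h-approx-infty}, which is the source of the $\Delta t^{-1/2}$ factor multiplying the $p$-approximation term in~\eqref{error-estimate} (one $\Delta t^{-1/2}$ converting a sum of $N_i$ squared jumps into an $L^\infty$ bound). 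This gives control of $\|\phi_p\|_\DG$ and $\|\phi_\u\|_{\Omega^T}$.

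Then I would recover the remaining two norms. For $\|\phi_p\|_{\Omega^T}$, apply the weakly-continuous inf--sup condition~\eqref{inf--sup-weak-cont}: for $w = \phi_p$ there is $\vv \in \V_{h,0}^\Dt$ with $\|\vv\|_{L^2(0,T;\V)} \le \beta^{-1}\,(\ldots)$ and $b^T(\vv,\phi_p)$ bounded below; feeding this $\vv$ into the first error equation~\eqref{erreq1} bounds $\|\phi_p\|_{\Omega^T}$ by $\|\phi_\u\|_{\Omega^T} + \|\bchi_\u\|_{\Omega^T}$ --- here the $\div\vv$ contribution is harmless because $b^T(\vv,w)$ only involves $\|\vv\|_{L^2(0,T;\V)}$, not just $\|\vv\|_{\Omega^T}$, so no inverse inequality is needed for this step. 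For $\|\phi_\lambda\|_{\Gamma^T}$, apply the mortar inf--sup condition~\eqref{inf--sup}: pick $\vv \in \V_h^\Dt$ (now in the full space, not $\V_{h,0}^\Dt$) realizing the supremum for $\mu = \phi_\lambda$, and use~\eqref{method.1} minus~\eqref{weak.1} tested against this $\vv$; the subtlety is that this $\vv$ is not weakly continuous, so the $b^T_\Gamma(\vv, \bchi_\lambda)$ term survives and one needs the discrete trace inequality~\eqref{trace} to bound $\langle \vv\cdot\n_i, \bchi_\lambda\rangle \le \|\vv\cdot\n_i\|_{\Gamma_i^T}\|\bchi_\lambda\|_{\Gamma_i^T} \le C h^{-1/2}\|\vv\|_{\Omega^T}\|\bchi_\lambda\|_{\Gamma_i^T}$ --- this is the origin of the $h^{-1/2}$ factor on the $\lambda$-approximation term in~\eqref{error-estimate}. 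Finally, assemble: triangle inequality combines the $\phi_\bullet$ bounds with the $\bchi_\bullet$ bounds, the latter substituted from~\eqref{pi-0-approx}, \eqref{p-h-approx}, \eqref{p-gamma-approx}, \eqref{p-h-approx-infty}, and~\eqref{init-error} (whose right-hand side is itself bounded by approximation terms of the same orders); the quasi-uniformity hypotheses and $h \le Ch_i$, $\Dt \le C\Dt_i$ are what let us replace the various local $h_i, \Dt_i$ by the global $h, \Dt$. The main obstacle is the careful treatment of the DG-in-time term $(\dt \bchi_p, \phi_p)_{\Omega^T}$: getting the jump-term structure right, exploiting the $L^2$-in-time projection orthogonality, and correctly tracking the $\Delta t^{-1/2}$ that appears --- everything else is a routine, if lengthy, assembly of standard mixed-method and projection estimates.
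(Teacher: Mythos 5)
There is a genuine gap in your derivation of the error equations. You assert that the $b^T_\Gamma$ terms drop out when testing with $\vv\in\V_{h,0}^\Dt$, and your first error equation contains no interface term at all. But weak continuity, by definition~\eqref{weak-cont}, only gives $b^T_\Gamma(\vv,\mu)=0$ for $\mu$ in the mortar space $\Lambda_H^\DT$; the exact trace $\lambda=p|_\Gamma$ appearing in~\eqref{weak.1} does not belong to $\Lambda_H^\DT$, so subtracting the discrete from the continuous problem leaves the consistency term $b^T_\Gamma(\vv,\lambda-\P_{H,\Gamma}^{\DT}\lambda)$ in the velocity error equation (this is exactly the structure of~\eqref{error.1} in the paper). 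This term cannot be discarded: it must be estimated with the discrete trace inequality~\eqref{trace} (which is where the quasi-uniformity of $\T_{h,i}$ and $h\le C h_i$ enter already at this stage), and it is the source of the $h^{-\frac12}(H^{r_m}+\DT^{r_s})$ contribution in the bounds for the velocity and the pressure, cf.~\eqref{bound-u-p} and~\eqref{bound-p}, not only in the mortar bound as in your plan. Indeed, if your error equation were correct, the energy argument would bound $\|\u-\u_h^\Dt\|_{\Omega^T}$ independently of $H$ and $\DT$, which is false: flux continuity is enforced only weakly against $\Lambda_H^\DT$, so an arbitrarily coarse mortar space would then still give a convergent velocity, contradicting the multiscale nature of the method. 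The same omission propagates into your pressure-recovery step, where you claim that no inverse inequality is needed; with the correct error equation the Fortin test function still meets the consistency term, and~\eqref{bound-p} carries the $h^{-\frac12}$ mortar term as well.

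Apart from this, the architecture of your plan coincides with the paper's proof: the energy argument with $\vv=\bPi_{h,0}^\Dt\u-\u_h^\Dt$ and the projected pressure error as test functions, Lemma~\ref{lem:dt-positive} for the DG time term, the $\Dt^{-\frac12}$ factor arising from summing $O(N_i)$ slab and jump terms against the $L^\infty$-in-time bound~\eqref{p-h-approx-infty} (requiring quasi-uniform $\T^\Dt_i$ and $\Dt\le C\Dt_i$), and the recovery of the pressure and mortar errors through the two inf--sup conditions, followed by the triangle inequality and~\eqref{init-error}. Restoring the term $b^T_\Gamma(\vv,\lambda-\P_{H,\Gamma}^{\DT}\lambda)$ and estimating it via~\eqref{trace} is therefore the missing ingredient, and with it your argument essentially reproduces the paper's proof.
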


\begin{proof}
For
the purpose of the analysis, we consider the following equivalent
formulation of~\eqref{method} in the space of weakly continuous velocities $\V_{h,0}^\Dt$ given by~\eqref{weak-cont}: find $\u_{h,0}^\Dt \in \V_{h,0}^\Dt$ and $p_h^\Dt \in W_h^\Dt$ such that $(p_{h,0}^\Dt)^- = p_{h,0}$ and
\begin{subequations}\begin{align}
  & a^T(\u_h^\Dt,\vv) + b^T(\vv,p_h^\Dt) = 0
  \quad \forall \, \vv \in \V_{h,0}^\Dt, \label{weak-cont.1} \\
  & (\dt p_h^\Dt,w)_{\Omega^T} - b^T(\u_h^\Dt,w) = (q,w)_{\Omega^T}
  \quad \forall \, w \in W_h^\Dt. \label{weak-cont.2}
 \end{align}\end{subequations}
The fact that $\P_{H,\Gamma}^\DT$ defined in~\eqref{eq_PHG} maps to $\Lambda_H^\DT$ and definition~\eqref{weak-cont} imply that $b^T_{\Gamma}(\vv, \P_{H,\Gamma}^\DT \lambda) = 0$ for all $\vv \in \V_{h,0}^\Dt$, where $\lambda = p|_\Gamma$ is the pressure trace from~\eqref{weak}. Then,
subtracting~\eqref{weak-cont.1}--\eqref{weak-cont.2} from
\eqref{weak.1}--\eqref{weak.2},
we obtain the error equations
\begin{subequations}\begin{align}
  & a^T(\u - \u_h^\Dt,\vv) + b^T(\vv,\P_h^\Dt p - p_h^\Dt)
  + b^T_{\Gamma}(\vv,\lambda - \P_{H,\Gamma}^{\DT} \lambda) = 0
  \quad \forall \, \vv \in \V_{h,0}^\Dt, \label{error.1} \\
  & \big(\d_t p - \dt p_h^\Dt,w \big)_{\Omega^T} - b^T(\bPi_{h,0}^\Dt\u - \u_h^\Dt,w) = 0
  \quad \forall \, w \in W_h^\Dt, \label{error.2}
 \end{align}\end{subequations}
where we have also used~\eqref{div-orth} and~\eqref{pi-0-div} to incorporate the
interpolants $\P_h^\Dt$ and $\bPi_{h,0}^\Dt$.
We take $\vv = \bPi_{h,0}^\Dt\u - \u_h^\Dt$ and
$w = \P_h^\Dt p - p_h^\Dt$ and sum the two equations, resulting in
\begin{equation}\begin{split}
  & a^T(\bPi_{h,0}^\Dt\u - \u_h^\Dt,\bPi_{h,0}^\Dt\u - \u_h^\Dt)
  + \big(\d_t p - \dt p_h^\Dt,\P_h^\Dt p - p_h^\Dt\big)_{\Omega^T} \\
  & \qquad = a^T(\bPi_{h,0}^\Dt\u - \u,\bPi_{h,0}^\Dt\u - \u_h^\Dt)
  - b^T_{\Gamma}(\bPi_{h,0}^\Dt\u - \u_h^\Dt,\lambda - \P_{H,\Gamma}^{\DT} \lambda). \label{energy-1}
\end{split}\end{equation}
For the second term on the left of~\eqref{energy-1}, restricted to a subdomain, we write
\begin{equation}\begin{split}\label{dt-term}
& \int_0^T \big(\d_t p - \dt p_h^\Dt,\P_h^\Dt p - p_h^\Dt\big)_{\Omega_i}
    = \int_0^T \big(\dt(p - p_h^\Dt),\P_h^\Dt p - p_h^\Dt\big)_{\Omega_i} \\
& \qquad =  \int_0^T \big(\dt(p - p_h^\Dt),p - p_h^\Dt\big)_{\Omega_i}
+ \int_0^T \big(\dt(p - p_h^\Dt),\P_h^\Dt p - p\big)_{\Omega_i} =: I_1^i + I_2^i.
\end{split}\end{equation}
Using \eqref{dt-positive} and notation~\eqref{eq_DG}, for the first term, we have
\begin{equation}\label{I1}
  \sum_i I_1^i = \frac12\|p - p_h^\Dt\|_{\DG}^2 - \frac12\|p_0 - p_{h,0}\|_\Omega^2.
\end{equation}
Using~\eqref{dt-DG}, the second term is
\begin{equation}\label{I2}
 I_2^i = \sum_{k=1}^{N_i} \int_{t_i^{k-1}}^{t_i^k}
 \big(\d_t(p - p_h^\Dt),\P_h^\Dt p - p\big)_{\Omega_i}
 + \sum_{k=1}^{N_i} \big([p - p_h^\Dt]_{k-1},(\P_h^\Dt p - p)_{k-1}^+ \big)_{\Omega_i}.
\end{equation}
For the first term on the right above, using the orthogonality property of $\P_h^\Dt$, we develop
\begin{equation}\begin{split}\label{I21}
    & \sum_{k=1}^{N_i} \int_{t_i^{k-1}}^{t_i^k}
    \big(\d_t(p - p_h^\Dt),\P_h^\Dt p - p\big)_{\Omega_i}
    = \sum_{k=1}^{N_i} \int_{t_i^{k-1}}^{t_i^k}
    \big(\d_t(p - \P_h^\Dt p),\P_h^\Dt p - p\big)_{\Omega_i} \\
    &\qquad = - \frac12 \sum_{k=1}^{N_i} \int_{t_i^{k-1}}^{t_i^k}
    \d_t\|\P_h^\Dt p - p\|_{\Omega_i}^2
    = -\frac12\sum_{k=1}^{N_i}\|\P_h^\Dt p - p\|_{\Omega_i}^2\Big|_{t^{k-1}}^{t^k}.
\end{split}
\end{equation}
Combining~\eqref{energy-1}--\eqref{I21}, and using~\eqref{k-spd} and the Cauchy--Schwarz and Young inequalities, we obtain,
\begin{align*}
  & \|\bPi_{h,0}^\Dt\u - \u_h^\Dt\|_{\Omega^T}^2
  + \|p - p_h^\Dt\|_{\DG}^2 \nonumber \\
  & \quad \leq C\Big(\|\bPi_{h,0}^\Dt\u - \u\|_{\Omega^T}
  \|\bPi_{h,0}^\Dt\u - \u_h^\Dt\|_{\Omega^T}
  + \sum_i \|(\bPi_{h,0}^\Dt\u - \u_h^\Dt)\cdot\n_i\|_{\Gamma_i^T}
  \|\lambda - \P_{H,\Gamma}^{\DT} \lambda\|_{\Gamma_i^T} \\
  & \qquad + \sum_i \|[p - p_h^\Dt]_{k-1}\|_{\Omega_i}
  \|(\P_h^\Dt p - p)_{k-1}^+\|_{\Omega_i}
+ \sum_i\sum_{k=1}^{N_i}\|\P_h^\Dt p - p\|_{\Omega_i}^2\Big|_{t^{k-1}}^{t^k}
+ \|p_0 - p_{h,0}\|_\Omega^2 \Big)\\
  & \quad \leq \epsilon \left(\|\bPi_{h,0}^\Dt\u - \u_h^\Dt\|_{\Omega^T}^2
  + \|p - p_h^\Dt\|_{\DG}^2 \right) \nonumber \\
  & \qquad + C_{\epsilon} \Big(\|\bPi_{h,0}^\Dt\u - \u\|_{\Omega^T}^2
  + h^{-1}\|\lambda - \P_{H,\Gamma}^{\DT} \lambda\|_{\Gamma^T}^2
  + \sum_i \sum_{k=1}^{N_i}\|(\P_h^\Dt p - p)_{k-1}^+\|_{\Omega_i}^2 \Big)\\
  & \qquad + C\Big(\sum_i\sum_{k=1}^{N_i}\|\P_h^\Dt p - p\|_{\Omega_i}^2\Big|_{t^{k-1}}^{t^k}
+ \|p_0 - p_{h,0}\|_\Omega^2 \Big),
\end{align*}
where we used the trace (inverse) inequality~\eqref{trace} for a quasi-uniform mesh $\T_{h,i}$ and $h \le C h_i$ in the last estimate. Taking $\epsilon$ sufficiently small gives
\begin{equation}\begin{split}
 & \|\bPi_{h,0}^\Dt\u - \u_h^\Dt\|_{\Omega^T}
    + \|p - p_h^\Dt\|_{\DG} \le C \Big(\|\bPi_{h,0}^\Dt\u - \u\|_{\Omega^T}
+ h^{-\frac12}\|\lambda - \P_{H,\Gamma}^{\DT} \lambda\|_{\Gamma^T} \\
  & \qquad  \label{bound-u-p}
  + \Delta t ^{-\frac12}\|\P_h^\Dt p - p\|_{L^\infty(0,T;L^2(\Omega))}
+ \|p_0 - p_{h,0}\|_{\Omega} \Big),
\end{split}\end{equation}
where we used that $N_i \le \frac{CT}{\Dt_i}$ for a quasi-uniform time mesh $\T^\Dt_i$ and $\Dt \le C \Dt_i$ to obtain the factor $\Dt^{-\frac12}$.

Next, the inf--sup condition for the weakly continuous velocity
\eqref{inf--sup-weak-cont} and~\eqref{error.1} imply, using~\eqref{trace} and $h \le C h_i$,
\begin{equation}\label{bound-p}
  \|\P_h^\Dt p - p_h^\Dt\|_{\Omega^T} \le C \left( \|\u - \u_h^\Dt\|_{\Omega^T}
  + h^{-\frac12}\|\lambda - \P_{H,\Gamma}^{\DT} \lambda\|_{\Gamma^T} \right).
\end{equation}
Finally, to obtain a bound on $\lambda_H^\DT$, we subtract~\eqref{method.1} from
\eqref{weak.1}, to obtain the error equation
\begin{equation}\label{error-lambda}
  a^T(\u - \u_h^\Dt,\vv) + b^T(\vv,p - p_h^\Dt)
  + b^T_{\Gamma}(\vv,\P_{H,\Gamma}^{\DT} \lambda - \lambda_H^\DT)
  = b^T_{\Gamma}(\vv,\P_{H,\Gamma}^{\DT} \lambda - \lambda)
  \quad \forall \, \vv \in \V_{h}^\Dt.
\end{equation}
The mortar inf--sup condition~\eqref{inf--sup} and~\eqref{error-lambda}
imply, using~\eqref{trace} and $h \le C h_i$,
\begin{equation}\label{bound-lambda}
  \|\P_{H,\Gamma}^{\DT} \lambda - \lambda_H^\DT\|_{\Gamma^T} \le C \left(
  \|\u - \u_h^\Dt\|_{\Omega^T} + \|p - p_h^\Dt\|_{\Omega^T}
  + h^{-\frac12}\|\lambda - \P_{H,\Gamma}^{\DT} \lambda\|_{\Gamma^T} \right).
\end{equation}
The assertion of the theorem follows from combining~\eqref{bound-u-p},
\eqref{bound-p}, and~\eqref{bound-lambda} and using the triangle inequality, \eqref{init-error}, and
the approximation bounds~\eqref{approx}--\eqref{p-h-approx-infty}.
\end{proof}

\begin{remark}[The factors $\Delta t^{-\frac12}$ and $h^{-\frac12}$ and appropriate choice of the polynomial degrees $m$ and $s$] \label{rem:div}
The term $h^{-\frac12}(H^{r_m} + \DT^{r_s})$ in the error bound
appears due the use of the discrete trace (inverse) inequality~\eqref{trace} to
control the consistency error $b^T_{\Gamma}(\bPi_{h,0}^\Dt\u -
\u_h^\Dt,\lambda - \P_{H,\Gamma}^{\DT} \lambda)$. This term can be
made comparable to the other error terms in~\eqref{error-estimate} by
choosing $m$ and $s$ sufficiently large, assuming
that the solution is sufficiently smooth. Alternatively, this term can
be improved if a bound on $\|\div(\u - \u_h^\Dt)\|_{\Omega_i^T}$ is
available, with the use of the normal trace inequality for $\H({\rm div};\Omega_i)$ functions. In this case the factor $\Delta t^{-\frac12}$ in the term $\Delta t^{-\frac12} (h^{r_l} + \Dt^{r_q})$ can also be avoided by using a suitable time-interpolant for the pressure. We present this argument in the next section in the special case of matching time steps between subdomains; then, additionally, the assumptions on quasi-uniform space and time meshes $\T_{h,i}$ and $\T^\Dt_i$ can be avoided.
\end{remark}

\section{Control of the velocity divergence and improved error estimates}
\label{sec:div}

In this section we establish stability and error estimates for the velocity divergence, along with an improved error bound for the rest of the variables, as noted in
Remark~\ref{rem:div} above. For this section only, we make the following assumption on the temporal discretization:
\begin{equation}\label{dt-match}
\forall \, i,j, \quad W_i^\Dt = \Lambda_{ij}^\DT = W_j^\Dt.
\end{equation}
In particular, we assume that all subdomains and mortar interfaces have the same time discretization, which we denote by $W^\Dt$. Let $t^k$, $k = 0,\ldots,N$, be the discrete times and let $q$ be the polynomial degree in $W^\Dt$. In this section, consequently, $\Delta T = \Dt$  and $s=q$.

We will utilize the Radau reconstruction operator $\I$ \cite{Makr_Noch_a_post_par_06, Ern_Sme_Voh_heat_HO_Y_17}, which satisfies, for any $\varphi(x,\cdot) \in W^\Dt$,
$\I\varphi(x,\cdot) \in H^1(0,T)$, $\I\varphi(x,\cdot)|_{(t^{k-1},t^k)} \in P_{q+1}$, such that
\begin{equation*}%\label{radau-defn}
  \int_{t^{k-1}}^{t^k} \d_t \I\varphi \,\phi = \int_{t^{k-1}}^{t^k} \d_t \varphi \, \phi
+ [\varphi]_{k-1} \, \phi_{k-1}^+ \quad \forall \, \phi(x,\cdot) \in W^\Dt.
\end{equation*}
Recalling \eqref{dt-DG}, this implies that
\begin{equation}\label{radau-prop}
\int_0^T \d_t \I\varphi \,\phi = \int_0^T \dt \varphi \, \phi \quad \forall \, \phi(x,\cdot) \in W^\Dt.
\end{equation}
Then the second equation \eqref{method.2} of the space-time mortar mixed
method can be rewritten as
\begin{equation}\label{method.2-radau}
  (\d_t \I p_h^\Dt,w)_{\Omega^T} - b^T(\u_h^\Dt,w) = (q,w)_{\Omega^T}
  \quad \forall \, w \in W_h^\Dt.
\end{equation}

For notational convenience, for $\vv \in \V$, let henceforth $\divh \vv \in L^2(\Omega)$ be such that $\forall \, i$, $(\divh \vv)|_{\Omega_i} = \div (\vv|_{\Omega_i})$.

\subsection{Stability bound}

We proceed with the stability bound for $\|\divh \u_h^\Dt\|_{L^2(0,T;L^2(\O))}$; under assumption~\eqref{dt-match}, this complements the bound~\eqref{stability} of Theorem~\ref{well-posed}.

\begin{theorem}[Control of divergence]
Assume that condition \eqref{dt-match} holds. Then, for the solution of the space-time mortar method~\eqref{method}, there exists a constant $C > 0$ independent of $h$, $H$, $\Dt$, and $\Delta T$ such that
\begin{equation*}
  \|\d_t \I p_h^\Dt\|_{\Omega^T} + \|\divh \u_h^\Dt\|_{\Omega^T}
  + \|\u_h^\Dt\|_{\DG}
  \le C(\|q\|_{\Omega^T} + \|\div K \nabla p_{0}\|_{\O}).
\end{equation*}
\end{theorem}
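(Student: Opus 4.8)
The plan is to test equation~\eqref{method.2-radau} with the pressure variable itself, exploiting the Radau reconstruction to obtain a clean energy identity, and then to use the space-time mixed method's first equation~\eqref{method.1} together with the already-established stability bound~\eqref{stability} to control the velocity divergence.

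First, I would take $w = \I p_h^\Dt$ in a version of~\eqref{method.2-radau}, or more precisely work directly with~\eqref{method.2} tested against a suitable discrete function. The key observation is that under assumption~\eqref{dt-match}, the Radau reconstruction $\I p_h^\Dt$ is well-defined globally in time on the common time mesh $W^\Dt$, and that $\d_t \I p_h^\Dt$ lies in the broken polynomial space in time; hence $\P^\Dt(\d_t \I p_h^\Dt)$ interacts nicely with $W_h^\Dt$. Testing~\eqref{method.2-radau} with $w = \P_h^\Dt(\d_t \I p_h^\Dt) \in W_h^\Dt$ (noting that $\d_t \I p_h^\Dt$ is already piecewise polynomial in both space and time when $p_h^\Dt$ is, so this projection may in fact be the identity, or at least behaves well), I would get an identity of the form $\|\d_t \I p_h^\Dt\|_{\Omega^T}^2 = b^T(\u_h^\Dt, \d_t \I p_h^\Dt) + (q, \d_t \I p_h^\Dt)_{\Omega^T}$. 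The term $b^T(\u_h^\Dt, \d_t \I p_h^\Dt) = -(\divh \u_h^\Dt, \d_t \I p_h^\Dt)_{\Omega^T}$ is then bounded via Cauchy--Schwarz and Young, provided I have independent control of $\|\divh \u_h^\Dt\|_{\Omega^T}$.

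To control $\|\divh \u_h^\Dt\|_{\Omega^T}$, I would differentiate~\eqref{method.1} in time, or rather apply~\eqref{method.1} with a time-differentiated test function, following the classical trick for mixed methods: since $\div \V_{h,i}^\Dt = W_{h,i}^\Dt$ on each subdomain, I can choose $\vv \in \V_{h,i}^\Dt$ with $\div \vv = \divh \u_h^\Dt$ pointwise in space-time on $\Omega_i^T$ and $\|\vv\|_{L^2(0,T;\V_i)} \le C\|\divh \u_h^\Dt\|_{\Omega_i^T}$ (this uses that $\div$ is surjective from the local mixed space onto $W_{h,i}$, with a bounded right inverse, and the tensor-product time structure). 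Then~\eqref{method.1} gives $\|K^{-1/2}\u_h^\Dt\|\cdot\|\divh \u_h^\Dt\| \gtrsim a^T(\u_h^\Dt,\vv) = -b^T(\vv, p_h^\Dt) - b^T_\Gamma(\vv, \lambda_H^\DT)$; the first right-hand term is $(\divh \vv, p_h^\Dt) = (\divh \u_h^\Dt, p_h^\Dt)$ — wait, this reintroduces $\divh \u_h^\Dt$, so I would instead bound it by $\|\divh\u_h^\Dt\|\,\|p_h^\Dt\|$ and absorb, and the interface term $b^T_\Gamma(\vv, \lambda_H^\DT)$ must be handled carefully — here assumption~\eqref{dt-match} together with the weak continuity~\eqref{method.3} should let me replace $\lambda_H^\DT$ by $\P_{H,\Gamma}^\DT$ of something, or show this term vanishes because a suitable $\vv$ can be chosen in $\V_{h,0}^\Dt$. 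Actually the cleanest route: choose $\vv \in \V_{h,0}^\Dt$ (weakly continuous) with $\div\vv = \divh \u_h^\Dt$, which is possible because the surjectivity~\eqref{inf--sup-weak-cont} / the interpolant $\bPi_{h,0}^\Dt$ gives a divergence right-inverse into the weakly continuous space; then $b^T_\Gamma(\vv,\lambda_H^\DT)=0$ and~\eqref{method.1} yields $\|\divh\u_h^\Dt\|_{\Omega^T}^2 = -a^T(\u_h^\Dt,\vv) - b^T(\vv,p_h^\Dt) $ bounded by $C(\|\u_h^\Dt\|_{\Omega^T} + \|p_h^\Dt\|_{\Omega^T})\|\divh\u_h^\Dt\|_{\Omega^T}$, hence $\|\divh\u_h^\Dt\|_{\Omega^T} \le C(\|\u_h^\Dt\|_{\Omega^T} + \|p_h^\Dt\|_{\Omega^T})$, which is controlled by~\eqref{stability}.

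The main obstacle I anticipate is twofold. First, establishing the existence of a divergence right-inverse \emph{into the weakly continuous space $\V_{h,0}^\Dt$} with a mesh-independent bound: Lemma~\ref{lem_inf--sup-weak-cont} gives the inf--sup condition, which by the closed-range theorem yields such a bounded right inverse, but one must be careful that the bound is in $\|\cdot\|_{L^2(0,T;\V)}$ and controls $\|\divh\vv\| = \|\divh\u_h^\Dt\|$ exactly (not just up to the full $\V$-norm) — this should follow because on each $\Omega_i^T$ one can split $\vv$ using the local surjectivity of $\div$ onto $W_{h,i}^\Dt$ and then correct into $\V_{h,0}^\Dt$. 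Second, and more delicate, is the $\|\u_h^\Dt\|_\DG$ bound: one must test~\eqref{method.1} with a time-jump/endpoint version of $\u_h^\Dt$ or differentiate in time to pick up the DG jump terms $[\u_h^\Dt]_{k-1}$ and the endpoint $(\u_h^\Dt)_{N}^-$; this likely requires differentiating~\eqref{method.1} with respect to $t$ (legitimate since all quantities are piecewise polynomial in time on the common mesh), testing with $\dt \u_h^\Dt$-type objects, and using Lemma~\ref{lem:dt-positive} again — here the compatibility of the initial data $(\u_h^\Dt)_0^- = \u_{h,0}$ from~\eqref{init}--\eqref{init-data} and the bound~\eqref{init-stab} enter crucially to control the $-(\u_0^-)^2$ boundary term. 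After assembling these pieces, one absorbs the quadratic terms, invokes~\eqref{stability} and~\eqref{init-stab}, and arrives at the claimed estimate.
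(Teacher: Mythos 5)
There is a genuine gap, and it sits at the heart of your plan for controlling $\|\divh \u_h^\Dt\|_{\Omega^T}$. You propose to pick $\vv \in \V_{h,0}^\Dt$ with $\div\vv = \divh\u_h^\Dt$ and claim that \eqref{method.1} then yields $\|\divh\u_h^\Dt\|_{\Omega^T}^2 = -a^T(\u_h^\Dt,\vv) - b^T(\vv,p_h^\Dt)$. This identity is false: since $\vv \in \V_{h,0}^\Dt$ makes $b^T_\Gamma(\vv,\lambda_H^\DT)=0$, equation \eqref{method.1} reads $a^T(\u_h^\Dt,\vv) + b^T(\vv,p_h^\Dt) = 0$, so the right-hand side of your identity is exactly zero, and moreover $-b^T(\vv,p_h^\Dt) = (\divh\u_h^\Dt, p_h^\Dt)_{\Omega^T}$, not $\|\divh\u_h^\Dt\|_{\Omega^T}^2$. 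The Darcy equation \eqref{method.1} contains only the divergence of the \emph{test} function, never $\div\u_h^\Dt$, so no choice of $\vv$ can extract divergence control from it; you noticed this difficulty yourself ("this reintroduces $\divh\u_h^\Dt$") but the proposed fix does not repair it, because there is no quadratic term in $\divh\u_h^\Dt$ to absorb into. The same circularity then breaks your first step: after testing \eqref{method.2-radau} you want to handle the cross term $(\divh\u_h^\Dt, \d_t\I p_h^\Dt)_{\Omega^T}$ by Cauchy--Schwarz, which presupposes the divergence bound you never obtain. Note also that the divergence estimate genuinely cannot be a consequence of the stability bound \eqref{stability} alone, since $\div\u_h^\Dt$ is tied through \eqref{method.2} to $\dt p_h^\Dt$, which \eqref{stability} does not control.

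The mechanism that actually closes the argument (and is the paper's route) is different: since both $\d_t\I p_h^\Dt$ and $\divh\u_h^\Dt$ lie in $W_h^\Dt$, equation \eqref{method.2-radau} gives the pointwise identity $\d_t\I p_h^\Dt + \divh\u_h^\Dt = \P_h^\Dt q$, which you square. The cross term is then not estimated but shown to have a favorable sign: under \eqref{dt-match} the first equation holds for every $t$, so one may differentiate it in time (using the compatible initial data \eqref{init-data}, \eqref{init.1}) to get \eqref{method.1-radau}; testing it with $\vv=\u_h^\Dt$ and using \eqref{method.3} with $\mu = \d_t\I\lambda_H^\DT \in \Lambda_H^\DT$ gives $-b^T(\u_h^\Dt,\d_t\I p_h^\Dt) = \tfrac12\|K^{-\frac12}\u_h^\Dt\|_{\DG}^2 - \tfrac12\|K^{-\frac12}(\u_h^\Dt)_0^-\|_\Omega^2$ via Lemma~\ref{lem:dt-positive}. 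This single computation simultaneously supplies the $\|\u_h^\Dt\|_{\DG}$ bound (which your last paragraph correctly anticipates in spirit) and renders the cross term harmless, after which \eqref{k-spd} and \eqref{init-stab} finish the proof. In short, your DG-norm idea points in the right direction, but your divergence step is based on a false identity and needs to be replaced by the squared mass-conservation identity plus the sign argument for the cross term.
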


\begin{proof}
Since $\d_t \I p_h^\Dt \in W_h^\Dt$ and $\divh \u_h^\Dt \in W_h^\Dt$, \eqref{method.2-radau} implies that $\d_t \I p_h^\Dt + \divh \u_h^\Dt = \P_h^\Dt q$. Therefore,
\begin{equation}\label{div.1}
  \|\d_t \I p_h^\Dt\|_{\Omega^T}^2 + \|\divh \u_h^\Dt\|_{\Omega^T}^2
  + 2\big(\d_t \I p_h^\Dt,\divh \u_h^\Dt\big)_{\Omega^T}
  = \|\P_h^\Dt q\|_{\Omega^T}^2.
\end{equation}
To control the third term on the left, we note that \eqref{method.1} implies that for each $k = 1,\ldots,N$ and every $t \in [t^{k-1},t^k]$, it holds that
$$
a(\u_h^\Dt,\vv) + b(\vv, p_h^\Dt)
+ b_\Gamma(\vv,\lambda_H^\DT) = 0 \quad \forall \, \vv \in \V_h.
$$
Therefore, using that the initial data satisfy the above equation, see
\eqref{init-data} and \eqref{init.1}, we have
\begin{equation}\label{method.1-radau}
  a^T(\d_t \I \u_h^\Dt,\vv) + b^T(\vv,\d_t \I p_h^\Dt) + b^T_{\Gamma}(\vv,\d_t \I \lambda_H^\DT) = 0 \quad \forall \, \vv \in \V_h^\Dt.
\end{equation}
Taking $\vv = \u_h^\Dt$ in \eqref{method.1-radau} and $\mu = \d_t \I \lambda_H^\DT$ in \eqref{method.3} and combining the equations, we obtain
\begin{equation}\label{div.2}
-b^T(\u_h^\Dt,\d_t \I p_h^\Dt) = a^T(\d_t \I \u_h^\Dt,\u_h^\Dt)
= \frac12 \|K^{-\frac12}\u_h^\Dt\|^2_{\DG} - \frac12\|K^{-\frac12}(\u_h^\Dt)_0^-\|_\Omega^2,
\end{equation}
using \eqref{radau-prop}, \eqref{dt-positive}, and \eqref{eq_DG} for the second equality. The assertion of the lemma follows by combining \eqref{div.1} and \eqref{div.2} and using \eqref{k-spd}, \eqref{init-data}, and \eqref{init-stab}.
\end{proof}

\subsection{Improved a priori error error estimate}

In this section we utilize the control on $\|\divh \u_h^\Dt\|_{\Omega^T}$ to obtain error estimates that avoid the factors $h^{-\frac12}$ and $\Delta t^{-\frac12}$ that appear in the error estimate~\eqref{error-estimate}, together with the quasi-uniformity assumption on the space and time meshes $\T_{h,i}$ and $\T^\Dt_i$. To this end, we will use an alternative time interpolant. Let $\tilde\P^\Dt: H^1(0,T) \to W^\Dt$ be such that, for any $\varphi \in H^1(0,T)$,
\begin{equation}\label{P-tilde-defn}
\forall \, k = 1,\ldots,N, \quad
  \int_{t^{k-1}}^{t^k} (\tilde\P^\Dt \varphi - \varphi) w = 0 \quad \forall \, w \in P_{q-1}, \quad (\tilde\P^\Dt \varphi)_k^- = \varphi(t^{k}).
\end{equation}
Let us further set $(\tilde\P^\Dt \varphi)_0^- = \varphi(0)$ and define the space-time interpolant
\begin{equation}\label{eq_tilde_sp_tm}
    \tilde\P_h^\Dt = \P_h \times \tilde\P^\Dt.
\end{equation}
The following properties of $\tilde\P^\Dt$ and $\tilde\P_h^\Dt$ will be useful in the analysis.

\begin{lemma}[Time derivative orthogonality]\label{lem:P-tilde-prop}
For all $\varphi \in H^1(0,T)$ and $w \in W^\Dt$,
\begin{equation}\label{P-tilde-prop}
\int_0^T \d_t \varphi \, w = \int_0^T \dt \tilde\P^\Dt \varphi \, w.
\end{equation}
Furthermore, for all $\varphi \in H^1(0,T)$ and $w \in W_h^\Dt$,
\begin{equation}\label{P-tilde-prop-h}
\int_0^T (\d_t \varphi, w)_{\Omega} = \int_0^T (\dt \tilde\P_h^\Dt \varphi,w)_{\Omega}.
\end{equation}
\end{lemma}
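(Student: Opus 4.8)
The plan is to prove \eqref{P-tilde-prop} first and then deduce \eqref{P-tilde-prop-h} from it by integrating over the spatial variable.

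\textbf{Step 1: proving \eqref{P-tilde-prop}.} Fix $w \in W^\Dt$ and work on each time subinterval $(t^{k-1},t^k)$ separately, summing at the end. Recall that by the definition \eqref{dt-DG} of $\dt$,
$$
\int_0^T \dt \tilde\P^\Dt\varphi \, w = \sum_{k=1}^{N} \int_{t^{k-1}}^{t^k} \d_t(\tilde\P^\Dt\varphi)\, w + \sum_{k=1}^{N} [\tilde\P^\Dt\varphi]_{k-1}\, w_{k-1}^+.
$$
On a single interval, I would integrate by parts in time:
$$
\int_{t^{k-1}}^{t^k} \d_t(\tilde\P^\Dt\varphi)\, w = (\tilde\P^\Dt\varphi)_k^- w_k^- - (\tilde\P^\Dt\varphi)_{k-1}^+ w_{k-1}^+ - \int_{t^{k-1}}^{t^k} \tilde\P^\Dt\varphi \, \d_t w,
$$
and similarly for $\varphi$ itself (with $\varphi$ continuous, there are no jumps). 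Since $\d_t w|_{(t^{k-1},t^k)} \in P_{q-1}$, the orthogonality condition in \eqref{P-tilde-defn} gives $\int_{t^{k-1}}^{t^k}(\tilde\P^\Dt\varphi - \varphi)\d_t w = 0$, so the two integral-by-parts expressions differ only in the boundary terms. Using the endpoint-matching property $(\tilde\P^\Dt\varphi)_k^- = \varphi(t^k)$ for $k=1,\ldots,N$ and $(\tilde\P^\Dt\varphi)_0^- = \varphi(0)$, together with $[\tilde\P^\Dt\varphi]_{k-1} = (\tilde\P^\Dt\varphi)_{k-1}^+ - (\tilde\P^\Dt\varphi)_{k-1}^- = (\tilde\P^\Dt\varphi)_{k-1}^+ - \varphi(t^{k-1})$, the jump terms and the boundary terms should telescope and cancel against the corresponding quantities for $\varphi$, leaving exactly $\sum_k \int_{t^{k-1}}^{t^k}\d_t\varphi\, w = \int_0^T \d_t\varphi\, w$. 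This is the one place that requires care: bookkeeping the $+$/$-$ limits at each node and checking that every boundary contribution from the integration by parts is matched by a jump term so that nothing survives except the left-hand side. I expect this to be the main (and essentially only) obstacle — it is a routine but slightly fiddly computation.

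\textbf{Step 2: proving \eqref{P-tilde-prop-h}.} This follows from \eqref{P-tilde-prop} applied pointwise in $x$. Given $\varphi \in H^1(0,T)$ (in the spatial-function-valued sense) and $w \in W_h^\Dt$, recall from \eqref{eq_tilde_sp_tm} that $\tilde\P_h^\Dt = \P_h \times \tilde\P^\Dt$, so $\dt \tilde\P_h^\Dt\varphi = \dt(\P_h\tilde\P^\Dt\varphi)$, and since $\P_h$ is the spatial $L^2$-projection commuting with time differentiation, one has $(\dt\tilde\P_h^\Dt\varphi, w)_\Omega = (\dt\tilde\P^\Dt(\P_h\varphi), w)_\Omega$ after moving $\P_h$ onto $w$ when legitimate, or more directly: for each fixed $x$ the scalar identity \eqref{P-tilde-prop} gives $\int_0^T \d_t\varphi(x,\cdot)\,w(x,\cdot) = \int_0^T \dt\tilde\P^\Dt\varphi(x,\cdot)\,w(x,\cdot)$; integrating this over $\Omega$ and then using that the spatial $L^2$-projection $\P_h$ appearing in $\tilde\P_h^\Dt$ can be transferred onto the test function $w \in W_h^\Dt$ (which is invariant under $\P_h$) yields $\int_0^T(\d_t\varphi, w)_\Omega = \int_0^T(\dt\tilde\P_h^\Dt\varphi, w)_\Omega$. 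Care is needed only to observe that $\P_h$ and $\d_t$ (or $\dt$) commute, which is immediate since $\P_h$ acts only in space and is time-independent, and that $\P_h w = w$ for $w \in W_h^\Dt$. This step is short and presents no real difficulty once Step 1 is in hand.
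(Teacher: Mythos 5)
Your proposal is correct and follows essentially the same route as the paper's proof: integration by parts on each time slab, the orthogonality of $\tilde\P^\Dt\varphi-\varphi$ against $\d_t w\in P_{q-1}$, and the endpoint conditions $(\tilde\P^\Dt\varphi)_k^-=\varphi(t^k)$, $(\tilde\P^\Dt\varphi)_0^-=\varphi(0)$, with the second identity obtained by transferring the spatial projection $\P_h$ using $w(\cdot,t)\in W_h$ and the fact that $\P_h$ commutes with the time operations. The bookkeeping you flag does close exactly as you expect: on each interval the boundary mismatch at $t^k$ vanishes by the endpoint condition, while the mismatch at $t^{k-1}$ equals $[\tilde\P^\Dt\varphi]_{k-1}w_{k-1}^+$ and cancels the jump term in the definition of $\dt$ — the paper runs the same computation starting from the left-hand side instead.
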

\begin{proof}
Using \eqref{P-tilde-defn}, we write
\begin{equation*}\begin{split}
    & \int_0^T \d_t \varphi \, w = \sum_{k=1}^N \int_{t^{k-1}}^{t^k}\d_t \varphi \, w
    = \sum_{k=1}^N\Big(-\int_{t^{k-1}}^{t^k}\varphi \, \d_t w + \varphi(t^k) w_k^-
    - \varphi(t^{k-1}) w_{k-1}^+ \Big)\\
    & \quad = \sum_{k=1}^N\Big(-\int_{t^{k-1}}^{t^k}\tilde\P^\Dt \varphi \, \d_t w
    + (\tilde\P^\Dt \varphi)_k^- w_k^-
    - (\tilde\P^\Dt \varphi)_{k-1}^- w_{k-1}^+ \Big)\\
    & \quad = \sum_{k=1}^N \Big( \int_{t^{k-1}}^{t^k}\d_t \tilde\P^\Dt \varphi \, w
    + [\tilde\P^\Dt \varphi]_{k-1}w_{k-1}^+ \Big)
    = \int_0^T \dt \tilde\P^\Dt \varphi \, w,
  \end{split}
\end{equation*}
where we used the definition \eqref{dt-DG} in the last equality. This establishes \eqref{P-tilde-prop}. The identity \eqref{P-tilde-prop-h} follows from \eqref{P-tilde-prop}, using that $\displaystyle \int_0^T(\d_t \varphi, w)_{\Omega} = \int_0^T (\d_t \P_h \varphi,w)_{\Omega}$.
\end{proof}

Consider the space-time interpolants $\tilde\bPi_{h,0}^\Dt = \bPi_{h,0} \times\tilde\P^\Dt$ in $\V_{h,0}^\Dt$ and
$\tilde\P_h^\Dt$ from~\eqref{eq_tilde_sp_tm} in $W_h^\Dt$. Similarly to \eqref{pi-0-approx} and \eqref{p-h-approx}, they satisfy
\begin{align}
& \|\bpsi - \tilde\bPi_{h,0}^\Dt\bpsi\|_{\Omega^T} \le C
  \sum_i \|\bpsi\|_{H^{r_q}(0,T;\H^{r_k}(\Omega_i))}(h^{r_k} + \Dt^{r_q})
  + C \|\bpsi\|_{H^{r_q}(0,T;\H^{\tilde r_k+\frac12}(\Omega))}(h^{\tilde r_k}H^{\frac12} + \Dt^{r_q}), \nonumber \\
  & \qquad\qquad\qquad\qquad\qquad\qquad
  0 < r_k \le k+1, \quad 0 < \tilde r_k \le k+1, \quad 1 \le r_q \le q+1, \label{tilde-pi-0-approx} \\
  & \|\div(\bpsi - \tilde\bPi_{h,0}^\Dt\bpsi)\|_{\Omega_i^T} \le C
  \|\div\bpsi\|_{H^{r_q}(0,T;H^{r_l}(\Omega_i))}(h^{r_l} + \Dt^{r_q}), \nonumber \\
  & \qquad\qquad\qquad\qquad\qquad\qquad
  0 \le r_l \le l+1, \quad 1 \le r_q \le q+1,
  \label{tilde-div-approx}\\
& \|\varphi - \tilde\P_h^\Dt \varphi\|_{\Omega_i^T}
  \le C \|\varphi\|_{H^{r_q}(0,T;H^{r_l}(\Omega_i))}(h^{r_l} + \Dt^{r_q}),
  \quad 0 \le r_l \le l+1, \quad 1 \le r_q \le q+1, \label{tilde-p-h-approx}
%\\
%& \|\varphi - \tilde\P_h^\Dt \varphi\|_{L^\infty(0,T;L^2(\Omega_i))}
%  \le C \|\varphi\|_{W^{r_q,\infty}(0,T;H^{r_l}(\Omega_i))}(h^{r_l} + \Dt^{r_q}),
%  \quad 0 \le r_l \le l+1, \quad 1 \le r_q \le q+1. \label{tilde-p-h-approx-infty}
\end{align}
with similar bounds in $\|\cdot\|_{L^\infty(0,T;L^2(\Omega_i))}$, cf. \eqref{p-h-approx-infty}.
The use of $\tilde\P_h^\Dt$ will allow us to avoid the term $\Dt^{-\frac12}$ in the error estimate.

We will also utilize the Scott--Zhang interpolant \cite{Scott-Zhang} $\tilde \P_{H,\Gamma}:H^1(\Gamma) \to \Lambda_{H,\Gamma}\cap C(\Gamma)$, which can be defined to preserve the trace on $\partial \Gamma$. Thus the function $\varphi - \tilde \P_{H,\Gamma} \, \varphi$ can be extended continuously by zero to $\d\Omega \setminus \Gamma$ in the $H^{\frac12}$-norm. Let us denote this extension by
$E(\varphi - \tilde \P_{H,\Gamma} \, \varphi)$. Let
$\tilde \P_{H,\Gamma}^\DT = \tilde \P_{H,\Gamma} \times \tilde\P^\Dt$ be the space-time mortar interpolant. It has the approximation property \cite{Scott-Zhang}
\begin{equation}\begin{split}\label{SZ-approx}
& \|\varphi - \tilde\P_{H,\Gamma}^\DT\|_{t,\Gamma_{ij}^T}  \le C \|\varphi\|_{H^{r_s}(0,T;H^{r_m}(\Gamma_{ij}))}
(H^{r_m - t} + \DT^{r_s}), \\
& \qquad 1 \le r_m \le m+1, \quad 1 \le r_s \le s+1,
\quad 0 \le t \le 1.
\end{split}
\end{equation}
The use of $\tilde \P_{H,\Gamma}^\DT$, along with the well-known normal trace inequality, for all $\vv \in \H({\rm div}; \Omega_i)$, \linebreak $\|\vv\cdot\n_i\|_{-\frac12,\d\Omega_i} \le C \|\vv\|_{{\rm div};\Omega_i}$, which implies
\begin{equation}\label{normal-trace}
  \forall \, \vv \in L^2(0,T;\V_i), \quad
  \|\vv\cdot\n_i\|_{L^2(0,T;H^{-\frac12}(\d\Omega_i))} \le C \|\vv\|_{L^2(0,T;\V_i)},
\end{equation}
will allow us to avoid the $h^{-\frac12}$ term in the error estimate.

We are ready to prove the following error estimate that improves the bound~\eqref{error-estimate} of Theorem~\ref{conv-thm} under assumption~\eqref{dt-match}.

\begin{theorem}[Improved a priori error estimate]\label{conv-thm-tilde}
Assume that conditions~\eqref{mortar-assumption-space} and~\eqref{dt-match} hold
  and that the solution to~\eqref{weak} is sufficiently smooth. Then
  there exists a constant $C > 0$ independent of the mesh sizes $h$, $H$, $\Dt$, and $\Delta T$, such that the solution to the space-time mortar MFE method~\eqref{method} satisfies
  \begin{equation}\label{error-estimate-tilde} \begin{split}
      & \|\u(t^N) - (\u_h^\Dt)_N^-\|_{\Omega} + \|\u - \u_h^\Dt\|_{\Omega^T}
      + \|\divh(\u - \u_h^\Dt)\|_{\Omega^T} \\
      & \qquad\quad + \|p(t^N) - (p_h^\Dt)_N^-\|_\Omega + \|p - p_h^\Dt\|_{\Omega^T}
    + \|\lambda - \lambda_H^\DT\|_{\Gamma^T} \\
    & \quad \le C \Big(
    \sum_i \|\u\|_{W^{r_q,\infty}(0,T;\H^{r_k}(\Omega_i))}(h^{r_k} + \Dt^{r_q})
    + \|\u\|_{W^{r_q,\infty}(0,T;\H^{\tilde r_k+\frac12}(\Omega))}(h^{\tilde r_k}H^{\frac12} + \Dt^{r_q})\\
    & \qquad + \sum_i \|p\|_{W^{r_q,\infty}(0,T;H^{r_l}(\Omega_i))}
    (h^{r_l} + \Dt^{r_q})
    + \sum_{i,j} \|\lambda\|_{H^{r_s}(0,T;H^{r_m}(\Gamma_{ij}))} (H^{r_m-\frac12} + \DT^{r_s})\\
    & \qquad + \sum_i \|\u\|_{H^1(0,T;\H^{r_k}(\Omega_i))}h^{r_k}
    + \|\u\|_{H^1(0,T;\H^{\tilde r_k+\frac12}(\Omega))}h^{\tilde r_k}H^{\frac12}
    + \sum_{i,j} \|\lambda\|_{H^1(0,T;H^{r_m}(\Gamma_{ij}))} H^{r_m-\frac12}
    \Big),   \\
    & \ 0 < r_k \le k+1, \ 0 < \tilde r_k \le k+1, \ 1 \le r_q \le q+1,
    \ 0 \le r_l \le l+1, \ \frac12 \le r_m \le m+1, \ 1 \le r_s \le s+1.
 \end{split}\end{equation}
\end{theorem}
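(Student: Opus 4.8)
The plan is to revisit the error-equation argument of Theorem~\ref{conv-thm} but use the improved interpolants $\tilde\bPi_{h,0}^\Dt$, $\tilde\P_h^\Dt$, and $\tilde\P_{H,\Gamma}^\DT$ in place of $\bPi_{h,0}^\Dt$, $\P_h^\Dt$, and $\P_{H,\Gamma}^\DT$, together with the divergence control from the preceding theorem. First I would write the error equations analogous to~\eqref{error.1}--\eqref{error.2}: subtracting~\eqref{weak-cont.1}--\eqref{weak-cont.2} from~\eqref{weak.1}--\eqref{weak.2} and inserting $\tilde\P_h^\Dt p$, $\tilde\bPi_{h,0}^\Dt\u$, and $\tilde\P_{H,\Gamma}^\DT\lambda$. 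Here I would use~\eqref{div-orth} and~\eqref{pi-0-div} for the spatial orthogonality (still valid since the spatial part of the tilde interpolants is unchanged), and crucially Lemma~\ref{lem:P-tilde-prop}, specifically~\eqref{P-tilde-prop-h}, to replace $(\d_t p, w)_{\Omega^T}$ by $(\dt \tilde\P_h^\Dt p, w)_{\Omega^T}$ in the time-derivative term. This is the key structural gain: the difference $\dt(\tilde\P_h^\Dt p - p_h^\Dt)$ now lies entirely within the DG time framework, so Lemma~\ref{lem:dt-positive} applies cleanly and the term $I_2^i$ from~\eqref{dt-term} — the source of the $\Dt^{-1/2}$ loss in Theorem~\ref{conv-thm} — disappears.

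Next I would run the energy argument: take $\vv = \tilde\bPi_{h,0}^\Dt\u - \u_h^\Dt \in \V_{h,0}^\Dt$ and $w = \tilde\P_h^\Dt p - p_h^\Dt$, sum, and obtain an identity of the form $\|K^{-1/2}(\tilde\bPi_{h,0}^\Dt\u - \u_h^\Dt)\|_{\Omega^T}^2 + \tfrac12\|\tilde\P_h^\Dt p - p_h^\Dt\|_\DG^2 = a^T(\tilde\bPi_{h,0}^\Dt\u - \u, \tilde\bPi_{h,0}^\Dt\u - \u_h^\Dt) - b^T_\Gamma(\tilde\bPi_{h,0}^\Dt\u - \u_h^\Dt, \lambda - \tilde\P_{H,\Gamma}^\DT\lambda) + \tfrac12\|(\tilde\P_h^\Dt p)_0^- - p_{h,0}\|_\Omega^2$. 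The only delicate term is the interface consistency term. Instead of the discrete trace inequality~\eqref{trace}, I would extend $\lambda - \tilde\P_{H,\Gamma}^\DT\lambda$ by zero to $\d\Omega\setminus\Gamma$ using the Scott--Zhang construction (giving $E(\lambda - \tilde\P_{H,\Gamma}^\DT\lambda)$ with controlled $H^{1/2}$-norm), and then bound it by duality: $\langle (\tilde\bPi_{h,0}^\Dt\u - \u_h^\Dt)\cdot\n_i, E(\lambda - \tilde\P_{H,\Gamma}^\DT\lambda)\rangle_{\d\Omega_i^T} \le C\|\tilde\bPi_{h,0}^\Dt\u - \u_h^\Dt\|_{L^2(0,T;\V_i)}\|E(\lambda - \tilde\P_{H,\Gamma}^\DT\lambda)\|_{L^2(0,T;H^{1/2}(\d\Omega_i))}$ via the normal trace inequality~\eqref{normal-trace}. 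This replaces the $h^{-1/2}$ factor by an honest $\V_i$-norm, but it introduces $\|\div(\tilde\bPi_{h,0}^\Dt\u - \u_h^\Dt)\|_{\Omega_i^T}$ into the right-hand side. That is controlled by writing $\divh(\tilde\bPi_{h,0}^\Dt\u - \u_h^\Dt) = \divh(\tilde\bPi_{h,0}^\Dt\u - \u) + \divh(\u - \u_h^\Dt)$ and noting $\div\u_h^\Dt + \d_t\I p_h^\Dt = \P_h^\Dt q$ (from~\eqref{method.2-radau}) while $\div\u + \d_t p = q$, so $\divh(\u - \u_h^\Dt) = (q - \P_h^\Dt q) + (\d_t\I p_h^\Dt - \d_t p)$; the second piece is handled by another application of the Radau/tilde machinery and is an $O(h^{r_l}+\Dt^{r_q})$ quantity. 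After absorbing $\|\tilde\bPi_{h,0}^\Dt\u - \u_h^\Dt\|_{\Omega^T}$ and $\|\tilde\P_h^\Dt p - p_h^\Dt\|_\DG$ with Young's inequality, I would recover bounds on $\|\u - \u_h^\Dt\|_{\Omega^T}$ and $\|p(t^N) - (p_h^\Dt)_N^-\|_\Omega$, $\|\u(t^N) - (\u_h^\Dt)_N^-\|_\Omega$ via the DG-norm and the pointwise property $(\tilde\P^\Dt\varphi)_k^- = \varphi(t^k)$ of the tilde interpolant.

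The remaining quantities follow as in Theorem~\ref{conv-thm}: $\|\tilde\P_h^\Dt p - p_h^\Dt\|_{\Omega^T}$ from the divergence inf--sup~\eqref{inf--sup-weak-cont} applied to the error equation~\eqref{error.1}, now using~\eqref{normal-trace} rather than~\eqref{trace} so that no negative power of $h$ appears; and $\|\tilde\P_{H,\Gamma}^\DT\lambda - \lambda_H^\DT\|_{\Gamma^T}$ from the mortar inf--sup~\eqref{inf--sup} applied to~\eqref{error-lambda}, again via the normal trace inequality. For the divergence error $\|\divh(\u - \u_h^\Dt)\|_{\Omega^T}$ I would collect the identity above. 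Finally I would invoke the triangle inequality, the initial-data error bound~\eqref{init-error} (noting $(\tilde\P^\Dt p)_0^- = p_0$ so $\|(\tilde\P_h^\Dt p)_0^- - p_{h,0}\|_\Omega = \|\P_h p_0 - p_{h,0}\|_\Omega$), and the approximation bounds~\eqref{tilde-pi-0-approx}--\eqref{tilde-p-h-approx} and~\eqref{SZ-approx} with $t = \tfrac12$ to convert everything into the stated rates; the extra $H^1(0,T)$-in-time terms in~\eqref{error-estimate-tilde} arise precisely from the divergence-error contribution $\d_t\I p_h^\Dt - \d_t p$ and from differentiating the interface term in time. The main obstacle I expect is the careful bookkeeping of the divergence term: verifying that $\|\divh(\tilde\bPi_{h,0}^\Dt\u - \u_h^\Dt)\|_{\Omega^T}$ can be bounded by data-independent approximation terms plus a controllably small multiple of the error, without circularity, since it feeds back into the energy estimate through the interface consistency term; getting the constants and the order of absorption right, and confirming that the Scott--Zhang extension $E$ genuinely gives the $H^{1/2}(\d\Omega_i)$ control needed for~\eqref{normal-trace} to apply, is where the real work lies.
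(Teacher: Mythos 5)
Your overall strategy (tilde interpolants, Scott--Zhang extension with the normal trace inequality \eqref{normal-trace}, divergence control feeding back into the energy argument) is the same as the paper's, but two of your key steps do not hold as stated. First, the orthogonality you invoke to eliminate the projection terms from the error equations is not available: \eqref{div-orth} and \eqref{pi-0-div} are \emph{space-time} orthogonality properties of the full $L^2$ projections, whereas $\tilde\P^\Dt$ (and hence $\tilde\P_h^\Dt$, $\tilde\bPi_{h,0}^\Dt$) is orthogonal in time only against $P_{q-1}$ (for $q=0$ it has no orthogonality at all, only the endpoint matching). Consequently the error equations carry the extra terms $b^T(\vv,p-\tilde\P_h^\Dt p)$ and $b^T(\u-\tilde\bPi_{h,0}^\Dt\u,w)$, and your asserted energy identity is missing them; these are precisely the terms that force $\|\divh(\tilde\bPi_{h,0}^\Dt\u-\u_h^\Dt)\|_{\Omega^T}$ and $\|\divh(\u-\tilde\bPi_{h,0}^\Dt\u)\|_{\Omega^T}$ into the right-hand side of the energy estimate, in addition to the interface term you do account for.

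Second, and more seriously, your treatment of the divergence error is a hand-wave over the hardest part. In the decomposition $\divh(\u-\u_h^\Dt)=(q-\P_h^\Dt q)+(\d_t\I p_h^\Dt-\d_t p)$, the quantity $\d_t\I p_h^\Dt-\d_t p$ contains the discrete solution and is therefore an error term, not an approximation term; claiming it is $O(h^{r_l}+\Dt^{r_q})$ "by the Radau/tilde machinery" is exactly the estimate that has to be proved. The paper breaks the circularity you worry about by a separate argument: it squares the identity $\d_t(\I\tilde\P_h^\Dt p-\I p_h^\Dt)+\divh(\tilde\bPi_{h,0}^\Dt\u-\u_h^\Dt)=-\P_h^\Dt\big(\divh(\u-\tilde\bPi_{h,0}^\Dt\u)\big)$ and controls the cross term by differentiating the error equation in time (legitimate only because of \eqref{dt-match}), choosing $\vv=\tilde\bPi_{h,0}^\Dt\u-\u_h^\Dt$, using that $\d_t(\I\tilde\P_{H,\Gamma}^\Dt\lambda-\I\lambda_H^\DT)\in\Lambda_H^\DT$ so the corresponding mortar term vanishes against the weakly continuous velocity, and applying the summation-by-parts identity \eqref{dt-positive} to produce the velocity DG norm. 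This is also the only place where the $t^N$-velocity term and the $H^1(0,T)$-in-time approximation terms in \eqref{error-estimate-tilde} actually arise; your proposal names neither this mechanism nor any substitute for it, so the divergence bound (and with it the whole estimate, since the divergence feeds back into the energy and inf--sup bounds) is left unproved.
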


\begin{proof}
Subtracting~\eqref{weak-cont.1}--\eqref{weak-cont.2} from
\eqref{weak.1}--\eqref{weak.2}, we obtain the error equations,
$\forall \, \vv \in \V_{h,0}^\Dt$, $w \in W_h^\Dt$,
\begin{subequations}\begin{align}
    & a^T(\u - \u_h^\Dt,\vv) + b^T(\vv,\tilde\P_h^\Dt p - p_h^\Dt)
    + b^T(\vv,p - \tilde\P_h^\Dt p)
    + b^T_{\Gamma}(\vv,\lambda - \tilde\P_{H,\Gamma}^{\DT} \lambda) = 0, \label{error-tilde.1} \\
  & \big(\d_t p - \dt p_h^\Dt,w \big)_{\Omega^T}
  - b^T(\tilde\bPi_{h,0}^\Dt\u - \u_h^\Dt,w)
- b^T(\u - \tilde\bPi_{h,0}^\Dt\u,w) = 0. \label{error-tilde.2}
 \end{align}\end{subequations}
We note the extra third terms in \eqref{error-tilde.1} and \eqref{error-tilde.2} compared to \eqref{error.1} and \eqref{error.2},
which appear since $\tilde\P^\Dt$ has orthogonality only for $P_{q-1}$.
We take $\vv = \tilde\bPi_{h,0}^\Dt\u - \u_h^\Dt$ and $w = \tilde\P_h^\Dt p - p_h^\Dt$ and sum the two equations, obtaining
\begin{equation}\begin{split}
  & a^T(\tilde\bPi_{h,0}^\Dt\u - \u_h^\Dt,\tilde\bPi_{h,0}^\Dt\u - \u_h^\Dt)
  + \big(\d_t p - \dt p_h^\Dt,\tilde\P_h^\Dt p - p_h^\Dt\big)_{\Omega^T} \\
  & \qquad = a^T(\tilde\bPi_{h,0}^\Dt\u - \u,\tilde\bPi_{h,0}^\Dt\u - \u_h^\Dt)
  - b^T(\tilde\bPi_{h,0}^\Dt\u - \u_h^\Dt,p - \tilde\P_h^\Dt p)\\
  & \qquad\quad
  - b^T_{\Gamma}(\tilde\bPi_{h,0}^\Dt\u - \u_h^\Dt,
  \lambda - \tilde\P_{H,\Gamma}^{\DT} \lambda)
+ b^T(\u - \tilde\bPi_{h,0}^\Dt\u,\tilde\P_h^\Dt p - p_h^\Dt). \label{tilde-energy-1}
\end{split}\end{equation}
For the second term on the left, using \eqref{P-tilde-prop-h}, \eqref{dt-positive}, and \eqref{eq_DG}, we write
\begin{equation}\begin{split}\label{tilde-dt-term}
  \big(\d_t p - \dt p_h^\Dt, \tilde\P_h^\Dt p - p_h^\Dt\big)_{\Omega^T}
& = \big(\dt(\tilde\P_h^\Dt p - p_h^\Dt),\tilde\P_h^\Dt p - p_h^\Dt \big)_{\Omega^{T}}\\
& = \frac12\|\tilde\P_h^\Dt p - p_h^\Dt\|_{\DG}^2
  - \frac12\|\P_h p_0 - p_{h,0}\|_{\Omega}^2.
  \end{split}
\end{equation}
Combining \eqref{tilde-energy-1} and \eqref{tilde-dt-term}, and using the Cauchy--Schwarz and Young inequalities, we obtain,
\begin{equation}\begin{split}\label{tilde-energy-2}
  & \|\tilde\bPi_{h,0}^\Dt\u - \u_h^\Dt\|_{\Omega^T}^2
  + \|\tilde\P_h^\Dt p - p_h^\Dt\|_{\DG}^2 \\
  & \quad \leq C\Big(\|\tilde\bPi_{h,0}^\Dt\u - \u\|_{\Omega^T}
  \|\tilde\bPi_{h,0}^\Dt\u - \u_h^\Dt\|_{\Omega^T}
  + \|\divh(\tilde\bPi_{h,0}^\Dt\u - \u_h^\Dt)\|_{\Omega^T}
  \|p - \tilde\P_h^\Dt p\|_{\Omega^T} + \|\P_h p_0 - p_{h,0}\|_\Omega^2\\
  & \quad\quad + \|\divh(\u - \tilde\bPi_{h,0}^\Dt\u)\|_{\Omega^T}
  \|\tilde\P_h^\Dt p - p_h^\Dt\|_{\Omega^T}\\
  & \quad\quad + \sum_i \|(\tilde\bPi_{h,0}^\Dt\u - \u_h^\Dt)\cdot\n_i\|_{L^2(0,T;H^{-\frac12}(\d\Omega_i))}
  \|E(\lambda - \tilde\P_{H,\Gamma}^{\DT} \lambda)\|_{L^2(0,T;H^\frac12(\d\Omega_i))}
  \Big) \\
  & \quad \le \epsilon_1 \left(\|\tilde\bPi_{h,0}^\Dt\u - \u_h^\Dt\|^2_{\Omega^T}
  + \|\divh(\tilde\bPi_{h,0}^\Dt\u - \u_h^\Dt)\|^2_{\Omega^T}
  + \|\tilde\P_h^\Dt p - p_h^\Dt\|_{\Omega^T}^2 \right)
  + C \|\P_h p_0 - p_{h,0}\|_\Omega^2 \\
  & \quad\quad + C_{\epsilon_1}\Big(\|\tilde\bPi_{h,0}^\Dt\u - \u\|_{\Omega^T}^2
  + \|\divh(\u - \tilde\bPi_{h,0}^\Dt\u)\|_{\Omega^T}^2
  + \|p - \tilde\P_h^\Dt p\|_{\Omega^T}^2
  + \|\lambda - \tilde\P_{H,\Gamma}^{\DT} \lambda\|_{L^2(0,T;H^\frac12(\Gamma))}^2 \Big),
  \end{split}
\end{equation}
where we used \eqref{normal-trace} in the last inequality. To complete the estimate, we bound the pressure, mortar, and divergence errors. The inf--sup condition for the weakly continuous velocity
\eqref{inf--sup-weak-cont} and~\eqref{error-tilde.1} imply, using~\eqref{normal-trace},
\begin{equation}\begin{split}\label{bound-p-tilde}
  & \|\tilde\P_h^\Dt p - p_h^\Dt\|_{\Omega^T} \le C \Big(
  \|\tilde\bPi_{h,0}^\Dt\u - \u_h^\Dt\|_{\Omega^T}
  + \|\u - \tilde\bPi_{h,0}^\Dt\u\|_{\Omega^T} \\
  & \qquad + \|p - \tilde\P_h^\Dt p\|_{\Omega^T}
  + \|\lambda - \tilde\P_{H,\Gamma}^{\DT} \lambda\|_{L^2(0,T;H^\frac12(\Gamma))} \Big).
  \end{split}
\end{equation}
To bound the mortar error using the mortar inf--sup condition \eqref{inf--sup}, we
subtract~\eqref{method.1} from \eqref{weak.1}, to obtain the error equation
\begin{equation}\label{error-lambda-tilde}
a^T(\u - \u_h^\Dt,\vv) + b^T(\vv,p - p_h^\Dt)
    + b^T_{\Gamma}(\vv,\tilde\P_{H,\Gamma}^{\DT} \lambda - \lambda_H^\DT)
+ b^T_{\Gamma}(\vv,\lambda - \tilde\P_{H,\Gamma}^{\DT} \lambda)
    = 0 \quad \forall \, \vv \in \V_{h}^\Dt.
\end{equation}
The mortar inf--sup condition \eqref{inf--sup} and \eqref{error-lambda-tilde}
imply, using~\eqref{normal-trace},
\begin{equation}\begin{split}\label{bound-lambda-tilde}
    & \|\tilde\P_{H,\Gamma}^{\DT} \lambda - \lambda_H^\DT\|_{\Gamma^T} \le C \Big(
 \|\tilde\bPi_{h,0}^\Dt\u - \u_h^\Dt\|_{\Omega^T}
 + \|\u - \tilde\bPi_{h,0}^\Dt\u\|_{\Omega^T}\\
& \qquad + \|\tilde\P_h^\Dt p - p_h^\Dt\|_{\Omega^T}
 + \|p - \tilde\P_h^\Dt p\|_{\Omega^T}
  + \|\lambda - \tilde\P_{H,\Gamma}^{\DT} \lambda\|_{L^2(0,T;H^\frac12(\Gamma))} \Big).
  \end{split}
\end{equation}
Next, to bound on the divergence error, using \eqref{radau-prop} and \eqref{P-tilde-prop-h}, we rewrite \eqref{error-tilde.2} as
$$
\big(\d_t(\I\tilde \P_h^\Dt p - \I p_h^\Dt),w \big)_{\Omega^T}
- b^T(\tilde\bPi_{h,0}^\Dt\u - \u_h^\Dt,w) = b^T(\u - \tilde\bPi_{h,0}^\Dt\u,w),
$$
concluding that $\d_t(\I \tilde \P_h^\Dt p - \I p_h^\Dt)
+ \divh(\tilde\bPi_{h,0}^\Dt\u - \u_h^\Dt) = -\P_h^\Dt \big(\divh(\u - \tilde\bPi_{h,0}^\Dt\u)\big)$. Therefore,
\begin{equation}\begin{split}\label{div-err-1}
    & \|\d_t(\I \tilde \P_h^\Dt p - \I p_h^\Dt)\|_{\Omega^T}^2
    + \|\divh(\tilde\bPi_{h,0}^\Dt\u - \u_h^\Dt)\|_{\Omega^T}^2 \\
    & \qquad
    + 2\big(\d_t(\I \tilde \P_h^\Dt p - \I p_h^\Dt),
    \divh(\tilde\bPi_{h,0}^\Dt\u - \u_h^\Dt) \big)_{\Omega^T}
    = \|\P_h^\Dt\big(\divh(\u - \tilde\bPi_{h,0}^\Dt\u)  \big)\|_{\Omega^T}^2.
  \end{split}
\end{equation}
To control the third term on the left, we note that, since the subdomain and mortar time discretizations are the same, the error equation \eqref{error-tilde.1} holds for every $t \in [t^{k-1},t^k]$, $k = 1,\ldots,N$, with a test function $\vv \in \V_{h,0}$. Therefore, similarly to \eqref{method.1-radau}, it implies that
\begin{equation}\label{div-err-2}
a^T(\d_t(\u - \I \u_h^\Dt),\vv) + b^T(\vv,\d_t(p - \I p_h^\Dt))
  + b^T_{\Gamma}(\vv,\d_t(\lambda - \I \lambda_H^\DT)) = 0 \quad \forall \, \vv \in \V_{h}^\Dt.
\end{equation}
The first term is manipulated as
\begin{equation}\begin{split}\label{u-term}
  a^T(\d_t(\u - \I \u_h^\Dt),\vv) & =
  a^T(\d_t(\u - \bPi_{h,0} \u),\vv)
  + a^T(\d_t(\bPi_{h,0}\u  - \I \u_h^\Dt),\vv)\\
  & = a^T(\d_t(\u - \bPi_{h,0} \u),\vv)
  + a^T(\dt(\tilde\bPi_{h,0}^\Dt\u  - \u_h^\Dt),\vv),
  \end{split}
\end{equation}
using \eqref{P-tilde-prop} and \eqref{radau-prop} in the last equality. For the second term in \eqref{div-err-2} we write
\begin{equation}\label{p-term}
  b^T(\vv,\d_t(p - \I p_h^\Dt))
      = b^T(\vv,\d_t(\I \tilde \P_h^\Dt p - \I p_h^\Dt)),
\end{equation}
using \eqref{P-tilde-prop-h}, \eqref{radau-prop}, and the fact that $\divh \vv \in W_h^\Dt$. The third term in \eqref{div-err-2} is manipulated as
\begin{equation}\begin{split}\label{lambda-term}
    b^T_{\Gamma}(\vv,\d_t(\lambda - \I \lambda_H^\DT))
    & = b^T_{\Gamma}(\vv,\d_t(\lambda - \tilde\P_{H,\Gamma} \lambda))
    + b^T_{\Gamma}(\vv,\d_t(\tilde\P_{H,\Gamma} \lambda - \I \lambda_H^\DT))\\
    & = b^T_{\Gamma}(\vv,\d_t(\lambda - \tilde\P_{H,\Gamma} \lambda))
    + b^T_{\Gamma}(\vv,\d_t(\I \tilde\P_{H,\Gamma}^\Dt \lambda - \I \lambda_H^\DT)),
\end{split}
\end{equation}
using \eqref{P-tilde-prop} and \eqref{radau-prop} in the last equality. Now, combining \eqref{div-err-2}--\eqref{lambda-term}, taking $\vv = \tilde\bPi_{h,0}^\Dt\u  - \u_h^\Dt \in \V_{h,0}^\Dt$, and using that
$\d_t(\I \tilde\P_{H,\Gamma}^\Dt \lambda - \I \lambda_H^\DT) \in \Lambda_H^\DT$ to deduce that the last term in \eqref{lambda-term} is zero, we obtain
\begin{equation}\begin{split}\label{div-err-3}
& \big(\d_t(\I \tilde \P_h^\Dt p - \I p_h^\Dt),
\divh(\tilde\bPi_{h,0}^\Dt\u - \u_h^\Dt) \big)_{\Omega^T} =
a^T(\dt(\tilde\bPi_{h,0}^\Dt\u  - \u_h^\Dt),\tilde\bPi_{h,0}^\Dt\u  - \u_h^\Dt)\\
&\qquad + a^T(\d_t(\u - \bPi_{h,0} \u),\tilde\bPi_{h,0}^\Dt\u  - \u_h^\Dt)
+ b^T_{\Gamma}(\tilde\bPi_{h,0}^\Dt\u  - \u_h^\Dt,
  \d_t(\lambda - \tilde\P_{H,\Gamma} \lambda)).
\end{split}
\end{equation}
Combining \eqref{div-err-1} and \eqref{div-err-3} and using \eqref{dt-positive}
and the Cauchy--Schwarz and Young inequalities, we arrive at
\begin{equation}\begin{split}\label{div-err-4}
& \|\d_t(\I \tilde \P_h^\Dt p - \I p_h^\Dt)\|_{\Omega^T}^2
    + \|\divh(\tilde\bPi_{h,0}^\Dt\u - \u_h^\Dt)\|_{\Omega^T}^2
    + \|\tilde\bPi_{h,0}^\Dt\u  - \u_h^\Dt\|_{\DG}^2\\
& \quad  \le
\epsilon_2\left(\|\tilde\bPi_{h,0}^\Dt\u  - \u_h^\Dt\|_{\Omega^T}^2
+ \|\divh(\tilde\bPi_{h,0}^\Dt\u  - \u_h^\Dt)\|_{\Omega^T}^2 \right)
+ \|\divh(\u - \tilde\bPi_{h,0}^\Dt\u)\|_{\Omega^T}^2
\\
& \qquad + C_{\epsilon_2}\left(\|\d_t(\u - \bPi_{h,0} \u)_{\Omega^T}^2
+ \|\d_t(\lambda - \tilde\P_{H,\Gamma} \lambda)\|_{L^2(0,T;H^\frac12(\Gamma))}^2
\right)
+ C\|\bPi_{h,0}\u_0 - \u_{h,0}\|_{\Omega}^2.
  \end{split}
\end{equation}
Combining \eqref{tilde-energy-2}, \eqref{bound-p-tilde}, \eqref{bound-lambda-tilde}, and \eqref{div-err-4}, taking $\epsilon_1$ and $\epsilon_2$ small enough, we obtain
\begin{equation*}\begin{split}
    & \|\tilde\bPi_{h,0}^\Dt\u - \u_h^\Dt\|_{\Omega^T}^2
    + \|\divh(\tilde\bPi_{h,0}^\Dt\u - \u_h^\Dt)\|_{\Omega^T}^2
    + \|\tilde\bPi_{h,0}^\Dt\u  - \u_h^\Dt\|_{\DG}^2 \\
& \qquad\qquad    + \|\tilde\P_h^\Dt p - p_h^\Dt\|_{\Omega^T}^2
    + \|\tilde\P_h^\Dt p - p_h^\Dt\|_{\DG}^2
    + \|\tilde\P_{H,\Gamma}^{\DT} \lambda - \lambda_H^\DT\|_{\Gamma^T}^2\\
    & \quad \le C\Big(
    \|\u - \tilde\bPi_{h,0}^\Dt\u\|_{L^2(0,T;\V)}^2
+ \|p - \tilde\P_h^\Dt p\|_{\Omega^T}^2
+ \|\lambda - \tilde\P_{H,\Gamma}^{\DT} \lambda\|_{L^2(0,T;H^\frac12(\Gamma))}^2
+ \|\d_t(\u - \bPi_{h,0} \u)\|_{\Omega^T}^2
\\
& \qquad\qquad
+ \|\d_t(\lambda - \tilde\P_{H,\Gamma} \lambda)\|_{L^2(0,T;H^\frac12(\Gamma))}^2
    + \|\bPi_{h,0}\u_0 - \u_{h,0}\|_{\Omega}^2
    + \|\P_h p_0 - p_{h,0}\|_\Omega^2 \Big).
  \end{split}
\end{equation*}
Finally, using the triangle inequality, the approximation properties \eqref{tilde-pi-0-approx}--\eqref{SZ-approx}, and the initial error bound \eqref{init-error}, we arrive at \eqref{error-estimate-tilde}. We remark that in the final bound, we have kept only the term at time $t^N$ from the norm $\|\cdot\|_\DG$, since the approximation error in the full $\|\cdot\|_\DG$ norm involves a $\Dt^{-\frac12}$ factor.
\end{proof}

\begin{remark}[Significance of the improved error estimate]
  The error estimate \eqref{error-estimate-tilde} avoids the factors $h^{-\frac12}$ and $\Dt^{-\frac12}$, which appeared in the earlier bound \eqref{error-estimate}, and thus provides optimal order of convergence for all variables. Moreover, it provides a bound on the velocity divergence error. To the best of the authors' knowledge, such result has not been established in the literature for space-time mixed finite element methods with a DG time discretization, even on a single domain.
\end{remark}

%%%%%%%%%%%%%%%%%%%%%%%%%%%%%%%%%%%%%%%%%%%%%%%%%%%%
\section{Reduction to an interface problem}\label{sec_red_int}
%%%%%%%%%%%%%%%%%%%%%%%%%%%%%%%%%%%%%%%%%%%%%%%%%%%%

In this section we combine the time-dependent Steklov--Poincar\'e operator
approach from~\cite{hoang2013space} with the mortar domain
decomposition algorithm from~\cite{ACWY,APWY} to reduce the global
problem~\eqref{method} to a space-time interface
problem.

\subsection{Decomposition of the solution}

Consider a decomposition of the solution to
\eqref{method} in the form
\begin{equation}\label{split}
\u_h^\Dt = \u_h^{\Dt,*}(\lambda_H^\DT) + \overline \u_h^\Dt, \quad
p_h^\Dt = p_h^{\Dt,*}(\lambda_H^\DT) + \overline p_h^\Dt.
\end{equation}
Here, $\overline \u_h^\Dt \in \V_h^\Dt$,
$\overline p_h^\Dt \in W_h^\Dt$ are such that for each $\Omega_i^T$,
$(\overline \u_h^\Dt|_{\O_i^T} \in \V_{h,i}^\Dt, \, \overline p_h^\Dt|_{\O_i^T} \in W_{h,i}^\Dt)$ is the solution
to the space-time subdomain problem
in $\Omega_i^T$ with zero Dirichlet data on the space-time interfaces and the
prescribed source term, initial data,
and boundary data on the external boundary:
\begin{subequations}\label{bar}\begin{align}
  & a^T_i(\overline\u_h^\Dt,\vv) + b^T_i(\vv,\overline p_h^\Dt) = 0
  \quad \forall \, \vv \in \V_{h,i}^\Dt \label{bar.1} \\
  & (\dt \overline p_h^\Dt,w)_{\Omega_i^T} - b^T_i(\overline\u_h^\Dt,w) = (q,w)_{\Omega_i^T}
  \quad \forall \, w \in W_{h,i}^\Dt. \label{bar.2}
\end{align}\end{subequations}
Furthermore, for a given $\mu \in \Lambda_{H}^\DT$,
$\u_h^{\Dt,*}(\mu) \in \V_h^\Dt$,
$p_h^{\Dt,*}(\mu) \in W_h^\Dt$ are such that for each $\Omega_i^T$,
$(\u_h^{\Dt,*}(\mu)|_{\O_i^T}\in \V_{h,i}^\Dt, \, p_h^{\Dt,*}(\mu)|_{\O_i^T} \in W_{h,i}^\Dt)$ is
the solution to the space-time subdomain problem
in $\Omega_i^T$ with Dirichlet data $\mu$ on the space-time interfaces and zero
source term, initial data, and boundary data on the external boundary:
\begin{subequations}\label{star}\begin{align}
  & a^T_i(\u_h^{\Dt,*}(\mu),\vv) + b^T_i(\vv,p_h^{\Dt,*}(\mu)) = - \<\vv\cdot\n_i,\mu\>_{\Gamma_i^T}
  \quad \forall \, \vv \in \V_{h,i}^\Dt, \label{star.1} \\
  & (\dt p_h^{\Dt,*}(\mu),w)_{\Omega_i^T}
  - b^T_i(\u_h^{\Dt,*}(\mu),w) = 0
  \quad \forall \, w \in W_{h,i}^\Dt. \label{star.2}
\end{align}\end{subequations}
Note that both~\eqref{bar} and~\eqref{star} are posed in the individual space-time subdomains $\Omega_i^T$ and can thus be solved in parallel (on the entire space-time subdomains $\Omega_i^T$, without any synchronization on time steps).
It is easy to check that~\eqref{method} is equivalent to
solving the space-time interface problem: find $\lambda_H^\DT \in \Lambda_{H}^\DT$ such that
\begin{equation}\label{interface}
  - b^T_{\Gamma}(\u_h^{\Dt,*}(\lambda_H^\DT),\mu) = b^T_{\Gamma}(\overline\u_h^\Dt,\mu)
  \quad \forall \, \mu \in \Lambda_{H}^\DT,
  \end{equation}
and obtaining $\u_h^\Dt$ and $p_h^\Dt$ from~\eqref{split}--\eqref{star}.

\subsection{Space-time Steklov--Poincar\'e operator}\label{sec_SP}

The above problem can be written in an operator form: find $\lambda_H^\DT \in \Lambda_{H}^\DT$ such that
\begin{equation}\label{int-operator}
  S \, \lambda_H^\DT = g,
\end{equation}
where $S:\Lambda_{H}^\DT \to \Lambda_{H}^\DT$ is the space-time
Steklov--Poincar\'e operator defined as
\begin{equation}\label{S-defn}
  \<S \lambda,\mu\>_{\Gamma^T} = \sum_i \<S_i\lambda,\mu\>_{\Gamma_i^T},
  \quad \<S_i \lambda, \mu\>_{\Gamma_i^T} =
  - \<\u_h^{\Dt,*}(\lambda)\cdot\n_i,\mu\>_{\Gamma_i^T} \quad
  \forall \, \lambda,\mu \in \Lambda_{H}^\DT,
\end{equation}
and $g \in \Lambda_{H}^\DT$ is defined as
$\<g,\mu\>_{\Gamma^T} = b_{\Gamma}(\overline\u_h^\Dt,\mu) \,
\forall \, \mu \in \Lambda_{H}^\DT$.

\begin{lemma}[Space-time Steklov--Poincar\'e operator]\label{pos-def}
  Assume that conditions~\eqref{mortar-ass} hold. Then the operator $S$
  defined in~\eqref{S-defn} is positive definite.
\end{lemma}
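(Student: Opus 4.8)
The plan is to prove that $S$ is positive definite by establishing two facts: first, that $\langle S\lambda,\lambda\rangle_{\Gamma^T} \ge 0$ for all $\lambda \in \Lambda_H^\DT$, and second, that $\langle S\lambda,\lambda\rangle_{\Gamma^T} = 0$ implies $\lambda = 0$. The starting point is to test the subdomain problem \eqref{star} that defines the pair $(\u_h^{\Dt,*}(\lambda),p_h^{\Dt,*}(\lambda))$ with itself: take $\vv = \u_h^{\Dt,*}(\lambda)$ in \eqref{star.1} and $w = p_h^{\Dt,*}(\lambda)$ in \eqref{star.2}, then subtract. The mixed terms $b_i^T(\u_h^{\Dt,*},p_h^{\Dt,*})$ cancel, leaving
\[
a_i^T(\u_h^{\Dt,*}(\lambda),\u_h^{\Dt,*}(\lambda)) + (\dt p_h^{\Dt,*}(\lambda),p_h^{\Dt,*}(\lambda))_{\Omega_i^T} = -\langle \u_h^{\Dt,*}(\lambda)\cdot\n_i,\lambda\rangle_{\Gamma_i^T} = \langle S_i\lambda,\lambda\rangle_{\Gamma_i^T}.
\]
Summing over $i$ and applying Lemma~\ref{lem:dt-positive} to the time-derivative term (noting that the $*$-problems have zero initial data, so $(p_h^{\Dt,*})_0^- = 0$), one obtains
\[
\langle S\lambda,\lambda\rangle_{\Gamma^T} = \|K^{-\frac12}\u_h^{\Dt,*}(\lambda)\|_{\Omega^T}^2 + \tfrac12\|p_h^{\Dt,*}(\lambda)\|_\DG^2 \ge 0,
\]
using \eqref{k-spd} and the notation \eqref{eq_DG}; this gives nonnegativity immediately.

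For definiteness, suppose $\langle S\lambda,\lambda\rangle_{\Gamma^T} = 0$. Then the displayed identity forces $\u_h^{\Dt,*}(\lambda) = 0$ in $\Omega^T$ and also $p_h^{\Dt,*}(\lambda) = 0$ at all the discrete time levels $t_i^{N_i}$ and across all jumps $[p_h^{\Dt,*}(\lambda)]_{k-1}$. Since the time discretization is discontinuous piecewise polynomial and all jumps vanish while the final value on each time slab vanishes, a backward induction over time slabs (or the observation that $\dt$ applied to $p_h^{\Dt,*}(\lambda)$ vanishes) shows $p_h^{\Dt,*}(\lambda) = 0$ in $\Omega_i^T$ for every $i$; more directly, with $\u_h^{\Dt,*}(\lambda) = 0$ equation \eqref{star.2} reads $(\dt p_h^{\Dt,*}(\lambda),w)_{\Omega_i^T} = 0$ for all $w \in W_{h,i}^\Dt$, and combined with the vanishing of $p_h^\Dt$ in the $\DG$ seminorm and Lemma~\ref{lem:dt-positive}, $p_h^{\Dt,*}(\lambda) \equiv 0$. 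Substituting $\u_h^{\Dt,*}(\lambda) = 0$ and $p_h^{\Dt,*}(\lambda) = 0$ back into \eqref{star.1} yields $\langle \vv\cdot\n_i,\lambda\rangle_{\Gamma_i^T} = 0$ for all $\vv \in \V_{h,i}^\Dt$, i.e.\ $b_\Gamma^T(\vv,\lambda) = 0$ for all $\vv \in \V_h^\Dt$. The mortar inf--sup condition of Lemma~\ref{lem:inf--sup} then gives $\|\lambda\|_{L^2(0,T;L^2(\Gamma))} = 0$, hence $\lambda = 0$.

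The main obstacle — though it is more of a bookkeeping point than a genuine difficulty — is handling the time-derivative term carefully so that the DG jump contributions come out with the right sign and the zero initial data of the $*$-problems is used correctly; this is exactly what Lemma~\ref{lem:dt-positive} supplies, so the argument goes through cleanly. The only other point requiring a moment's care is the implication from $\langle S\lambda,\lambda\rangle_{\Gamma^T} = 0$ to $p_h^{\Dt,*}(\lambda) = 0$ everywhere (not just at the discrete nodes), for which the combination of the vanishing $\DG$ seminorm, the vanishing of $\u_h^{\Dt,*}(\lambda)$, and equation \eqref{star.2} is needed; once that is in hand, invoking Lemma~\ref{lem:inf--sup} finishes the proof.
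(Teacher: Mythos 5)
Your proof is correct, and its skeleton matches the paper's: the same energy argument (test \eqref{star.1} with $\u_h^{\Dt,*}(\lambda)$ and \eqref{star.2} with $p_h^{\Dt,*}(\lambda)$, use Lemma~\ref{lem:dt-positive} and the zero initial data of the $*$-problem) yields positive semidefiniteness, and both proofs finish by feeding $\u_h^{\Dt,*}(\lambda)=0$, $p_h^{\Dt,*}(\lambda)=0$ back into \eqref{star.1} and invoking the mortar inf--sup condition of Lemma~\ref{lem:inf--sup}. The genuine difference is the intermediate step $p_h^{\Dt,*}(\lambda)=0$. The paper discards the nonnegative DG term after the energy identity, deduces only $\u_h^{\Dt,*}(\lambda)=0$, and then obtains the vanishing of the pressure from the discrete divergence inf--sup condition on the weakly continuous space $\V_{h,0}^\Dt$ (Lemma~\ref{lem_inf--sup-weak-cont}) combined with \eqref{star.1}. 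You instead keep the DG term as an equality, so that $\<S\lambda,\lambda\>_{\Gamma^T}=0$ also forces $\|p_h^{\Dt,*}(\lambda)\|_{\DG}=0$, and then recover $p_h^{\Dt,*}(\lambda)\equiv 0$ on the slab interiors from \eqref{star.2} with $\u_h^{\Dt,*}(\lambda)=0$. Your route is more elementary at that step, using only the structure of the DG time discretization and the subdomain equation rather than the Fortin-type inf--sup of Lemma~\ref{lem_inf--sup-weak-cont}; the paper's is shorter (one line once that lemma is available) and reuses machinery needed elsewhere in the analysis, and the overall hypotheses coincide since Lemma~\ref{lem:inf--sup} already rests on \eqref{mortar-ass}. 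Two small slips to tidy up: the two subdomain equations must be \emph{added}, not subtracted, for the $b_i^T$ terms to cancel (your displayed identity is indeed the result of adding, since \eqref{star.2} carries $-b_i^T$); and in the argument that $p_h^{\Dt,*}(\lambda)\equiv 0$ you should state explicitly that the vanishing of the jumps reduces $(\dt p_h^{\Dt,*}(\lambda),w)_{\Omega_i^T}=0$ to $\int_{t_i^{k-1}}^{t_i^k}(\d_t p_h^{\Dt,*}(\lambda),w)_{\Omega_i}=0$ slab by slab, whence $\d_t p_h^{\Dt,*}(\lambda)=0$ on each slab, which together with zero jumps and zero initial value gives the claim.
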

\begin{proof}
For $\lambda, \mu \in \Lambda_H^\DT$, consider~\eqref{star.1} with data $\mu$ and test function
$\vv = \u_h^{\Dt,*}(\lambda)$. This implies, using~\eqref{S-defn},
\begin{equation}\label{S-charact}\begin{split}
  \<S \lambda,\mu\>_{\Gamma^T} & = a^T(\u_h^{\Dt,*}(\mu),\u_h^{\Dt,*}(\lambda)) + b^T(\u_h^{\Dt,*}(\lambda),p_h^{\Dt,*}(\mu)) \\
  & = a^T(\u_h^{\Dt,*}(\mu),\u_h^{\Dt,*}(\lambda))
  + (\dt p_h^{\Dt,*}(\lambda),p_h^{\Dt,*}(\mu))_{\Omega^T},
\end{split}\end{equation}
where we have used~\eqref{star.2} with data $\lambda$ and test function $p_h^{\Dt,*}(\mu)$ in the second equality. Lemma~\ref{lem:dt-positive} together with
$p_h^{\Dt,*}(\mu)(x,0) = 0$ (recall that zero initial data is supposed in~\eqref{star}) imply that
\begin{equation}\label{S-pos-def}
  \<S \mu,\mu\>_{\Gamma^T} \ge a^T(\u_h^{\Dt,*}(\mu),\u_h^{\Dt,*}(\mu)) \ge 0 \ \
  \forall \, \mu \in \Lambda_H^\DT,
\end{equation}
hence $S$ is positive semi-definite.
Assume that $\<S \mu,\mu\>_{\Gamma^T} = 0$. Then $\u_h^{\Dt,*}(\mu) = 0$. The inf--sup
condition for the weakly continuous velocity~\eqref{inf--sup-weak-cont} and
\eqref{star.1} imply $p_h^{\Dt,*}(\mu) = 0$. Then the mortar inf--sup
condition~\eqref{inf--sup} and~\eqref{star.1} imply $\mu = 0$, thus
$S$ is positive definite.
  \end{proof}
Due to Lemma~\ref{pos-def}, GMRES can be employed to solve the
interface problem~\eqref{int-operator}. On each GMRES iteration, the
dominant computational cost is the evaluation of the action of $S$,
which requires solving space-time problems with prescribed Dirichlet
interface data in each individual space-time subdomain $\Omega_i \times (0,T)$. The following result can
be used to provide a bound on the number of GMRES iterations.

\begin{theorem}[Spectral bound]\label{thm:specbound}
  Assume that conditions~\eqref{mortar-ass} hold. Let $\T_{h,i}$ be quasi-uniform and $h \le C h_i \ \forall i$. Then there exist positive constants
  $C_0$ and $C_1$ independent of the mesh sizes $h$, $H$, $\Dt$, and $\Delta T$, such that
\begin{equation}\label{spectral-bound}
  \forall \, \mu \in \Lambda_H^\DT, \quad
  C_0 \|\mu\|_{\Gamma^T}^2 \le \<S \mu,\mu\>_{\Gamma^T} \le C_1 h^{-1}
  \|\mu\|_{\Gamma^T}^2.
\end{equation}
\end{theorem}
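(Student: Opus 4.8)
The plan is to establish the two inequalities in~\eqref{spectral-bound} separately, using the characterization of $\langle S\mu,\mu\rangle_{\Gamma^T}$ from~\eqref{S-pos-def} and the two inf--sup conditions already available. Throughout, write $\u^* = \u_h^{\Dt,*}(\mu)$, $p^* = p_h^{\Dt,*}(\mu)$ for the subdomain solutions of~\eqref{star} with data $\mu$, so that~\eqref{S-pos-def} gives $\langle S\mu,\mu\rangle_{\Gamma^T} \ge a^T(\u^*,\u^*) \ge k_{\max}^{-1}\|\u^*\|_{\Omega^T}^2$ by~\eqref{k-spd}.

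\emph{Lower bound.} I would start from~\eqref{S-pos-def} and need to bound $\|\mu\|_{\Gamma^T}$ by $\|\u^*\|_{\Omega^T}$. The mortar inf--sup condition~\eqref{inf--sup} provides, for every $\mu \in \Lambda_H^\DT$, a test function $\vv \in \V_h^\Dt$ with $b^T_\Gamma(\vv,\mu) \ge \beta_\Gamma\|\mu\|_{\Gamma^T}\|\vv\|_{L^2(0,T;\V)}$. Testing~\eqref{star.1} with this $\vv$ gives $-b^T_\Gamma(\vv,\mu) = a^T(\u^*,\vv) + b^T(\vv,p^*)$, so $\beta_\Gamma\|\mu\|_{\Gamma^T}\|\vv\|_{L^2(0,T;\V)} \le C(\|\u^*\|_{\Omega^T} + \|p^*\|_{\Omega^T})\|\vv\|_{L^2(0,T;\V)}$, whence $\|\mu\|_{\Gamma^T} \le C(\|\u^*\|_{\Omega^T} + \|p^*\|_{\Omega^T})$. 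It remains to control $\|p^*\|_{\Omega^T}$ by $\|\u^*\|_{\Omega^T}$; this is exactly what the weakly continuous inf--sup condition~\eqref{inf--sup-weak-cont} together with~\eqref{star.1} delivers (note $\u^* \in \V_{h,0}^\Dt$ since $\mu \in \Lambda_H^\DT$ makes the jump orthogonal to the mortar space via~\eqref{star.1}, analogously to the equivalence argument in Section~\ref{sec_red_int}; more directly, for $\vv \in \V_{h,0}^\Dt$ the interface term in~\eqref{star.1} vanishes and one gets $b^T(\vv,p^*) = -a^T(\u^*,\vv)$). Hence $\|p^*\|_{\Omega^T} \le C\|\u^*\|_{\Omega^T}$, so $\|\mu\|_{\Gamma^T}^2 \le C\|\u^*\|_{\Omega^T}^2 \le C\,a^T(\u^*,\u^*) \le C\langle S\mu,\mu\rangle_{\Gamma^T}$, which is the lower bound with $C_0 = 1/C$.

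\emph{Upper bound.} Here I would use~\eqref{S-charact} with $\lambda = \mu$ in the form $\langle S\mu,\mu\rangle_{\Gamma^T} = -\langle \u^*\cdot\n_i,\mu\rangle_{\Gamma^T}$ summed over $i$ (the definition~\eqref{S-defn}), and estimate by Cauchy--Schwarz on each $\Gamma_i^T$: $\langle S\mu,\mu\rangle_{\Gamma^T} \le \sum_i \|\u^*\cdot\n_i\|_{\Gamma_i^T}\|\mu\|_{\Gamma_i^T}$. The discrete trace (inverse) inequality~\eqref{trace}, valid since $\T_{h,i}$ is quasi-uniform, gives $\|\u^*\cdot\n_i\|_{\Gamma_i^T} \le C h_i^{-1/2}\|\u^*\|_{\Omega_i^T} \le C h^{-1/2}\|\u^*\|_{\Omega_i^T}$ using $h \le Ch_i$. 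Thus $\langle S\mu,\mu\rangle_{\Gamma^T} \le C h^{-1/2}\|\u^*\|_{\Omega^T}\|\mu\|_{\Gamma^T}$. To close, I need $\|\u^*\|_{\Omega^T} \le C h^{-1/2}\|\mu\|_{\Gamma^T}$: this follows by testing~\eqref{star.1} with $\vv = \u^*$ and~\eqref{star.2} with $w = p^*$, combining (as in~\eqref{S-charact}) to get $a^T(\u^*,\u^*) + (\dt p^*,p^*)_{\Omega^T} = -\langle \u^*\cdot\n_i,\mu\rangle_{\Gamma^T}$; by Lemma~\ref{lem:dt-positive} and zero initial data the time term is nonnegative, so $k_{\max}^{-1}\|\u^*\|_{\Omega^T}^2 \le a^T(\u^*,\u^*) \le \sum_i\|\u^*\cdot\n_i\|_{\Gamma_i^T}\|\mu\|_{\Gamma_i^T} \le C h^{-1/2}\|\u^*\|_{\Omega^T}\|\mu\|_{\Gamma^T}$, giving $\|\u^*\|_{\Omega^T} \le C h^{-1/2}\|\mu\|_{\Gamma^T}$. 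Substituting back yields $\langle S\mu,\mu\rangle_{\Gamma^T} \le C h^{-1}\|\mu\|_{\Gamma^T}^2$, the upper bound with $C_1 = C$.

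\emph{Main obstacle.} The routine parts are the two Cauchy--Schwarz/inverse-inequality chains; the only genuinely delicate point is confirming that $\u_h^{\Dt,*}(\mu) \in \V_{h,0}^\Dt$ so that~\eqref{inf--sup-weak-cont} applies in the lower bound --- one must check from~\eqref{star.1} that the jump of $\u_h^{\Dt,*}(\mu)$ across interfaces is $L^2(0,T;\Lambda_H)$-orthogonal, which uses~\eqref{mortar-assumption-time} to see that $\Lambda_H^\DT$-test functions reach $W_{h,i}^\Dt$, exactly the mechanism already exploited in the proof of Lemma~\ref{lem_inf--sup-weak-cont}. Once that is in place, everything else is a direct combination of results established earlier in the paper.
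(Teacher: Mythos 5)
Your proof is correct, and its overall strategy matches the paper's: the upper bound via Cauchy--Schwarz, the discrete trace inequality \eqref{trace}, and the energy identity behind \eqref{S-pos-def}; the lower bound via the mortar control of $\|\mu\|_{\Gamma^T}$ combined with the weakly continuous inf--sup condition \eqref{inf--sup-weak-cont} to absorb $\|p_h^{\Dt,*}(\mu)\|_{\Omega^T}$, and then \eqref{S-pos-def} again. The one genuine difference is in the lower bound: you invoke Lemma~\ref{lem:inf--sup} as a black box, whereas the paper re-runs the construction underlying that lemma, namely the auxiliary Neumann problems \eqref{aux-problem} producing $\vv_i=\bPi_{h,i}^\Dt\bpsi_i$ with $\vv_i\cdot\n_i=\Q_{h,i}^\Dt\mu$, together with \eqref{mortar-assumption} and the elliptic-regularity bound \eqref{inf--sup-2} for $\|\vv_i\|_{L^2(0,T;\V_i)}$. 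Your shortcut is legitimate and slightly cleaner, since Lemma~\ref{lem:inf--sup} is already established under \eqref{mortar-ass}; the paper's inline construction buys nothing extra here beyond making the proof self-contained. Your upper bound is a mild reorganization of the paper's: the paper absorbs $\langle S_i\mu,\mu\rangle^{1/2}$ subdomain-wise using \eqref{S-pos-def} restricted to $\Omega_i^T$, while you first derive $\|\u_h^{\Dt,*}(\mu)\|_{\Omega^T}\le C h^{-1/2}\|\mu\|_{\Gamma^T}$ from the same energy identity (via \eqref{star.1}, \eqref{star.2}, and Lemma~\ref{lem:dt-positive}) and then substitute; both are fine.

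One remark in your closing paragraph is wrong, though harmlessly so: $\u_h^{\Dt,*}(\mu)$ does \emph{not} in general belong to $\V_{h,0}^\Dt$ for an arbitrary $\mu\in\Lambda_H^\DT$ --- weak flux continuity \eqref{method.3} is enforced only through the interface problem \eqref{interface} for the particular $\lambda_H^\DT$, not by the subdomain solves \eqref{star}. Fortunately, this membership is never needed: \eqref{inf--sup-weak-cont} is applied to the pressure $p_h^{\Dt,*}(\mu)\in W_h^\Dt$ and only requires \emph{test functions} $\vv\in\V_{h,0}^\Dt$, for which $b^T_\Gamma(\vv,\mu)=0$ by \eqref{weak-cont}, so that \eqref{star.1} summed over subdomains gives $b^T(\vv,p_h^{\Dt,*}(\mu))=-a^T(\u_h^{\Dt,*}(\mu),\vv)$ --- exactly the ``more directly'' route you state, which is also what the paper uses. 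So the ``main obstacle'' you identify is not an obstacle at all, and the stated claim should simply be deleted.
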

\begin{proof}
  Using~\eqref{S-defn}, the Cauchy--Schwarz inequality, \eqref{trace}, and
  $h \le C h_i$, we obtain
\begin{align*}
  \<S_i \mu,\mu\>_{\Gamma_i^T} \le
  \|\u_h^{\Dt,*}(\mu)\cdot\n_i\|_{\Gamma_i^T} \|\mu\|_{\Gamma_i^T}
  \le C h^{-\frac12}
  \|\u_h^{\Dt,*}(\mu)\|_{\Omega_i^T} \|\mu\|_{\Gamma_i^T}
  \le C h^{-\frac12} \<S_i \mu,\mu\>_{\Gamma_i^T}^{\frac12}\|\mu\|_{\Gamma_i^T},
\end{align*}
where we used~\eqref{S-pos-def}, which is also valid on each $\Omega_i^T$, in the last inequality. This implies the upper bound in~\eqref{spectral-bound}.

To prove the lower bound in~\eqref{spectral-bound}, we consider the
set of auxiliary subdomain problems~\eqref{aux-problem} with data $\mu$. Let
$\vv_i = \bPi_{h,i}^\Dt \bpsi_i$ and recall that
$\vv_i\cdot\n_i = \Q_{h,i}^\Dt \, \mu$. Using~\eqref{mortar-assumption}
and~\eqref{star.1}, we have
\begin{align*}
  \|\mu\|_{\Gamma^T}^2 & \le
    C \sum_i \<\Q_{h,i}^\Dt \, \mu, \Q_{h,i}^\Dt \, \mu\>_{\Gamma_i^T} =
    C \sum_i \<\Q_{h,i}^\Dt \, \mu, \mu\>_{\Gamma_i^T}
    = C \sum_i \<\vv_i\cdot\n_i, \mu\>_{\Gamma_i^T},\\
    & = -C\sum_i\left(a^T_i(\u_h^{\Dt,*}(\mu),\vv_i)
    + b^T_i(\vv_i,p_h^{\Dt,*}(\mu)) \right)\\
    & \le C \sum_i \left(\| \u_h^{\Dt,*}(\mu)\|_{\Omega_i^T}^2 +
    \| p_h^{\Dt,*}(\mu)\|_{\Omega_i^T}^2 \right)^{\frac12} \|\vv_i\|_{L^2(0,T;\V_{i})} \\
    & \le C \Bigg\{\sum_i \| \u_h^{\Dt,*}(\mu)\|_{\Omega_i^T}^2\Bigg\}^{\frac12} \Bigg\{ \sum_i \|\mu\|_{\Gamma_i^T}^2 \Bigg\}^{\frac12}\\
    & \le C \<S \mu,\mu\>_{\Gamma^T}^{\frac12} \|\mu\|_{\Gamma^T} .
\end{align*}
In the next to last inequality above, we used the Cauchy--Schwarz inequality together with the inf--sup condition~\eqref{inf--sup-weak-cont} and~\eqref{star.1} to bound $\| p_h^{\Dt,*}(\mu)\|_{\Omega^T}$ and the elliptic regularity
\eqref{inf--sup-2} to bound $\|\vv_i\|_{L^2(0,T;\V_{i})}$. In the last
inequality we used~\eqref{S-pos-def}. This concludes the proof.
\end{proof}

\subsection{GMRES convergence through the field-of-values estimates}\label{sec_fov}

Theorem~\ref{thm:specbound} leads to convergence estimates for solving
the interface problem~\eqref{int-operator} with GMRES. In
\cite[Theorem~3.3]{EisElmSch:83}, a bound is shown for the $k$-th
residual $\r_k$ of the generalized conjugate residual method for
solving a system with a positive definite matrix $\S \in \R^{n \times n}$, which also applies to GMRES.  It can be stated in terms of
angle $\beta \in [0, \pi/2)$, see~\cite{beck:2005}:
  \begin{equation}
  \label{eq:elman}
  \|\r_k \| \leq \sin^k(\beta) \| \r_0 \|, \quad \text{where }
  \cos(\beta) = \dfrac{\lambda_{\text{min}}((\S+\S^T)/2)}{\|\S\|},
\end{equation}
where $\|\cdot\|$ denotes the Euclidean vector norm and the induced matrix norm.
The quantities in~\eqref{eq:elman} can be interpreted in terms of the
field-of-values of $\S$, defined as
\[
W(\S) = \{ \bzeta^T \S \, \bzeta: \bzeta \in \bkC^n,
\|\bzeta \| = 1 \}.
\]
It is known (see~\cite[Chapter~15]{HornJohns:94}) that $W(\S)$ is a
compact and convex set in the complex plane that contains (but is
usually much larger than) the eigenvalues of $\S$. Because $\S$ is
positive definite, ${\bf 0} \not \in W(\S)$, and because $\S$ is real,
the smallest eigenvalue of the symmetric part of $\S$ is actually the
distance from ${\bf 0}$ to $W(\S)$, so that the angle $\beta$ can be
improved to, see~\cite{beck:2005} or~\cite[Theorem~2.2.2] {GreenbBook:97},
\[
\cos(\beta) = \dfrac{\text{dist}({\bf 0}, W(\S))}{\| \S \|}.
\]
The above bound, together with inequalities~\eqref{spectral-bound}
obtained in Theorem~\ref{thm:specbound}, imply that
the reduction in the $k$-th GMRES residual for solving the interface
problem~\eqref{int-operator} is bounded by
\begin{equation}
  \label{eq:fovbound}
 \|\r_k \| \leq  \left(\sqrt{1- (C_0/C_1)^2 h^2}\right)^k\, \|\r_0 \|.
\end{equation}
A similar inequality, allowing for an explicit preconditioning matrix,
has been obtained in~\cite{StarkeFOV:97}.

%\iftrue % Compiles the numerics section (followed by \fi)
%\iffalse % Does not compile the numerics section (followed by \fi)

%%%%%%%%%%%%%%%%%%%%%%%%%%%%%%%%%%%%%%%%%%%%%%%%%%%%
\section{Numerical results}\label{sec_num}
%%%%%%%%%%%%%%%%%%%%%%%%%%%%%%%%%%%%%%%%%%%%%%%%%%%%

In this section, we present several numerical results obtained with 
the space-time mortar method developed in Section~\ref{sec_MMFE}, illustrating the convergence rates and other theoretical results obtained in the
previous sections. The method is implemented using the deal.II finite element package~\cite{BangerthHartmannKanschat2007}.

In all the examples, we consider two-dimensional spatial domains and 
take the mixed finite element spaces ${\bf {V}}_{h,i}\times W_{h,i}$
on the spatial subdomain $\Omega_{i}$ to be the lowest order Raviart--Thomas pair $RT_{0}\times DGQ_{0}$ (\ie, $k=l=0$) on quadrilateral meshes \cite{BF},
where $DGQ_r$ denotes the space of discontinuous piecewise polynomials of degree up to $r$ in each variable. Combining this with the lowest-order DG (backward Euler, $q=0$) for time discretization
on the mesh $\mathcal{T}_{i}^\Dt$ gives us a space-time mixed
finite element space ${\bf {V}}_{h,i}^\Dt\times W_{h,i}^\Dt$
in $\Omega_{i}^{T}$ as detailed in Section~\ref{sec_grids_spaces}.
We test two different choices for the mortar finite
element space $\Lambda_{H,ij}^{\DT}$ on the space-time interface mesh $\mathcal{T}_{H,ij}^{\DT}$, with $\DT$ suitably chosen as a function of $\Dt$, depending on the mortar space polynomial degree. These are discontinuous bilinear $DGQ_{1}$ ($m=s=1$) and biquadratic $DGQ_{2}$ ($m=s=2$) mortars.

For solving the interface problem identified in Section~\ref{sec_SP}, we have
implemented the GMRES algorithm without preconditioner. Developing
a preconditioner for the iterative solver, which could significantly
reduce the number of iterations, and its theoretical analysis is a subject
of future research.

\subsection{Example 1: convergence test}\label{num_model}

In this example, we solve the parabolic problem~\eqref{parabolic} with a known solution to verify the accuracy of the space-time mortar method. We also discuss the correspondence of the number of interface GMRES iterations to the theoretical estimate. We further compare the accuracy and computational cost with $DGQ_{1}$ and $DGQ_{2}$ mortar spaces. We use the known pressure function $p(x,y,t)=\sin(8t)\sin(11x)\cos(11y-\frac{\pi}{4})$
along with permeability $K=I_{2\times2}$ to determine the right-hand side $q$ in~\eqref{parabolic} and impose Dirichlet boundary condition and initial condition on the space-time domain $\Omega^{T}=(0,1)^{2}\times(0,0.5)$.

\renewcommand{\tabcolsep}{4.7pt}

\begin{table}
	\caption{Example 1, mesh size and number of degrees of freedom \label{tab:Example-1:-Mesh}}
\centering{}%	
	\begin{tabular}{c|c|c|c|c|c|c|c|c|c|c|c|c|c|c|c|c|c|c}
		\hline
		{\small{}Ref.} & \multicolumn{3}{c|}{{\small{}$\Omega^{T}_{1}$}} & \multicolumn{3}{c|}{{\small{}$\Omega^{T}_{2}$}} & \multicolumn{3}{c|}{{\small{}$\Omega^{T}_{3}$}} & \multicolumn{3}{c|}{{\small{}$\Omega^{T}_{4}$}} & \multicolumn{3}{c|}{{\small{}$\Gamma^{T}$$(m=1)$}} & \multicolumn{3}{c}{{\small{}$\Gamma^{T}$$(m=2)$}}\tabularnewline
		\cline{2-19}
		{\small{}No.} & {\small{}$n_1$} & {\small{}$N_1$} & {\tiny{}\#DoF} & {\small{}$n_2$} & {\small{}$N_2$} & {\tiny{}\#DoF} & {\small{}$n_3$} & {\small{}$N_3$} & {\tiny{}\#DoF} & {\small{}$n_4$} & {\small{}$N_4$} & {\tiny{}\#DoF} & {\small{}$n_\Gamma$} & {\small{}$N_\Gamma$} & {\tiny{}\#DoF} & {\small{}$n_\Gamma$} & {\small{}$N_\Gamma$} & {\tiny{}\#DoF}\tabularnewline
		\hline
		{\small{}0} & {\small{}3} & {\small{}3} & {\small{}33} & {\small{}2} & {\small{}2} & {\small{}16} & {\small{}4} & {\small{}4} & {\small{}56} & {\small{}3} & {\small{}3} & {\small{}33} & {\small{}1} & {\small{}1} & {\small{}16} & {\small{}1} & {\small{}1} & {\small{}36}\tabularnewline
		\hline
		{\small{}1} & {\small{}6} & {\small{}6} & {\small{}120} & {\small{}4} & {\small{}4} & {\small{}56} & {\small{}8} & {\small{}8} & {\small{}208} & {\small{}6} & {\small{}6} & {\small{}120} & {\small{}2} & {\small{}2} & {\small{}64} &  &  & \tabularnewline
		\hline
		{\small{}2} & {\small{}12} & {\small{}12} & {\small{}456} & {\small{}8} & {\small{}8} & {\small{}208} & {\small{}16} & {\small{}16} & {\small{}800} & {\small{}12} & {\small{}12} & {\small{}456} & {\small{}4} & {\small{}4} & {\small{}256} & {\small{}2} & {\small{}2} & {\small{}144}\tabularnewline
		\hline
		{\small{}3} & {\small{}24} & {\small{}24} & {\small{}1776} & {\small{}16} & {\small{}16} & {\small{}800} & {\small{}32} & {\small{}32} & {\small{}3136} & {\small{}24} & {\small{}24} & {\small{}1776} & {\small{}8} & {\small{}8} & {\small{}1024} &  &  & \tabularnewline
		\hline
		{\small{}4} & {\small{}48} & {\small{}48} & {\small{}7008} & {\small{}32} & {\small{}32} & {\small{}3136} & {\small{}64} & {\small{}64} & {\small{}12416} & {\small{}48} & {\small{}48} & {\small{}7008} & {\small{}16} & {\small{}16} & {\small{}4096} & {\small{}4} & {\small{}4} & {\small{}576}\tabularnewline
		\hline
	\end{tabular}
\end{table}

\begin{table}
\caption{Example 1, convergence with bilinear mortars\label{tab:Linear-mortar-convergence}}
	
\centering{}%
\begin{tabular}{c|c|c|c|c|c|c|c|c|c|c}
\hline
{\small{}Ref.} & \multicolumn{2}{c|}{{\small{}\# GMRES}}
&
\multicolumn{2}{c|}
{{\small{}$\|\u-\u_h^{\Dt}\|_{L^{2}(0,T;{\bf L}^{2}(\Omega))}$}}
& \multicolumn{2}{c|}{{\small{}$\|p-p_h^{\Dt}\|_{\DG}$}}
& \multicolumn{2}{c|}{{\small{$\|p-p_h^{\Dt}\|_{L^{2}(0,T;W)}$}}}
& \multicolumn{2}{c}{{\small{}$\|\lambda-\lambda_H^\DT\|_{L^2(0,T;\Lambda_H)}$}}\tabularnewline
		\hline
		{\small{}0} & {\small{}11 } & {\small{}Rate} & {\small{}6.50e-01 } & {\small{}Rate} & {\small{}1.21e+00 } & {\small{}Rate} & {\small{}7.91e-01 } & {\small{}Rate} & {\small{}7.98e-01 } & {\small{}Rate}\tabularnewline
		\hline
		{\small{}1} & {\small{}23 } & {\small{}-1.06 } & {\small{}3.63e-01 } & {\small{}0.84 } & {\small{}7.21e-01 } & {\small{}0.75 } & {\small{}4.76e-01 } & {\small{}0.73 } & {\small{}5.11e-01 } & {\small{}0.64}\tabularnewline
		\hline
		{\small{}2} & {\small{}39 } & {\small{}-0.76 } & {\small{}1.74e-01 } & {\small{}1.06 } & {\small{}3.19e-01 } & {\small{}1.18 } & {\small{}2.46e-01 } & {\small{}0.95 } & {\small{}2.34e-01 } & {\small{}1.13}\tabularnewline
		\hline
		{\small{}3} & {\small{}59 } & {\small{}-0.60 } & {\small{}8.63e-02 } & {\small{}1.02 } & {\small{}1.46e-01 } & {\small{}1.13 } & {\small{}1.25e-01 } & {\small{}0.98 } & {\small{}1.20e-01 } & {\small{}0.96}\tabularnewline
		\hline
		{\small{}4} & {\small{}86 } & {\small{}-0.54 } & {\small{}4.29e-02 } & {\small{}1.01 } & {\small{}6.93e-02 } & {\small{}1.08 } & {\small{}6.25e-02 } & {\small{}1.00 } & {\small{}6.11e-02 } & {\small{}0.97}\tabularnewline
		\hline
	\end{tabular}
\end{table}

\begin{table}
	\caption{Example 1, convergence with biquadratic mortars \label{tab:Quadratic-mortar-convergence}}
	
	\centering{}%
	\begin{tabular}{c|c|c|c|c|c|c|c|c|c|c}
		\hline
		{\small{}Ref.} & \multicolumn{2}{c|}{{\small{}\# GMRES}} & \multicolumn{2}{c|}{{\small{}$\|\u-\u_h^{\Dt}\|_{L^{2}(0,T;{\bf L}^{2}(\Omega))}$}} & \multicolumn{2}{c|}{{\small{}$\|p-p_h^{\Dt}\|_{\DG}$}} & \multicolumn{2}{c|}{{\small{}$\|p-p_h^{\Dt}\|_{L^{2}(0,T;W)}$}} & \multicolumn{2}{c}{{\small{}$\|\lambda-\lambda_H^\DT\|_{L^2(0,T;\Lambda_H)}$}}\tabularnewline
		\hline
		{\small{}0} & {\small{}18 } & {\small{} Rate} & {\small{}6.81e-01 } & {\small{}Rate } & {\small{}1.35e+00 } & {\small{}Rate} & {\small{}8.39e-01 } & {\small{}Rate} & {\small{}2.13e+00 } & {\small{}Rate}\tabularnewline
		\hline
		{\small{}2} & {\small{}34 } & {\small{}-0.46} & {\small{}1.70e-01 } & {\small{}1.00 } & {\small{}3.51e-01 } & {\small{}0.97} & {\small{}2.51e-01 } & {\small{}0.87 } & {\small{}2.82e-01 } & {\small{}1.46}\tabularnewline
		\hline
		{\small{}4} & {\small{}57 } & {\small{}-0.37} & {\small{}4.48e-02 } & {\small{}0.96} & {\small{}8.59e-02 } & {\small{}1.02} & {\small{}6.59e-02 } & {\small{}0.96} & {\small{}9.20e-02 } & {\small{}0.81}\tabularnewline
		\hline
	\end{tabular}
\end{table}

\begin{figure}
	\begin{centering}
		\subfloat[On the whole space-time domain $\Omega^{T}$]{\includegraphics[width=0.8\columnwidth]{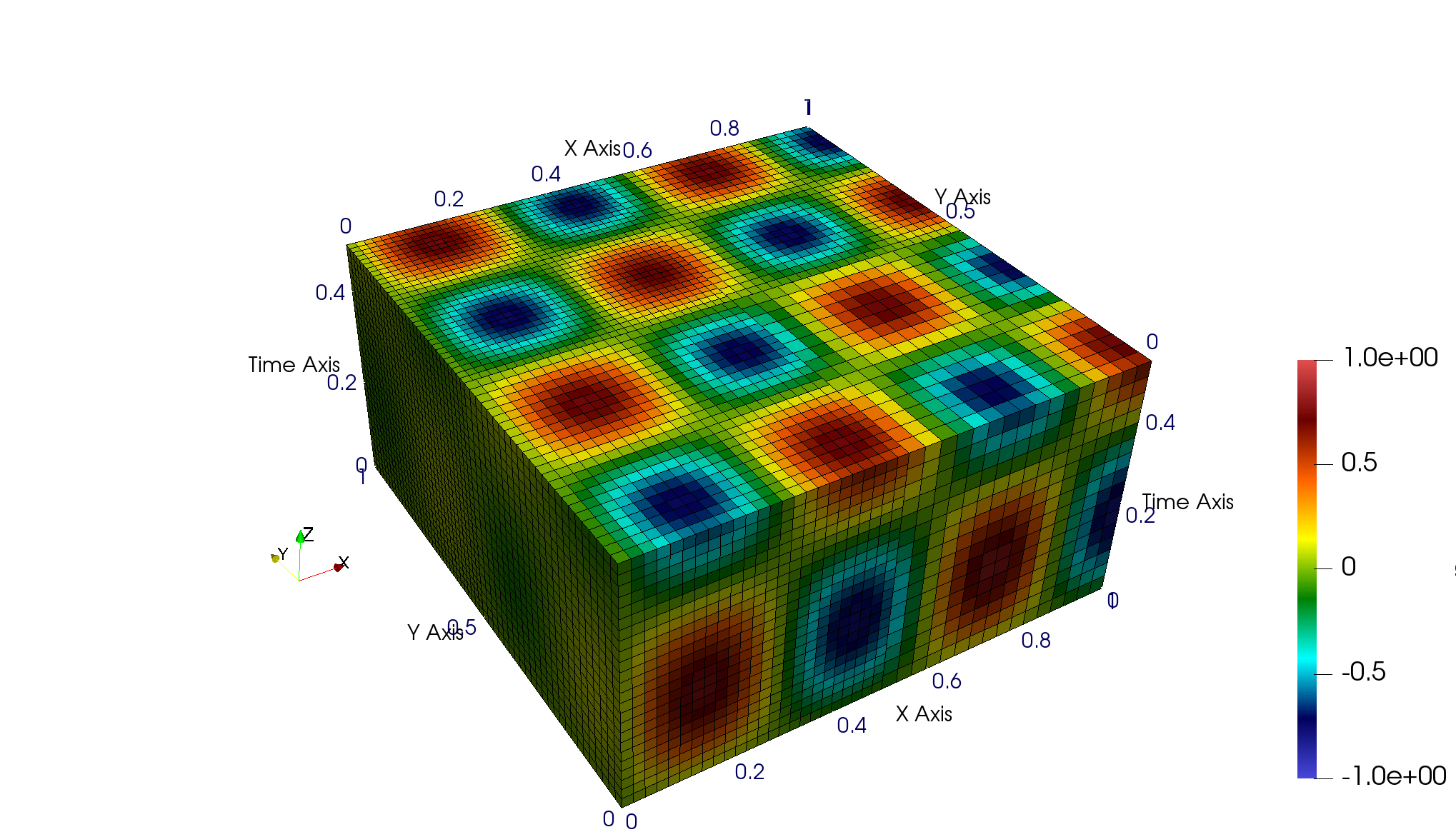}
			
		}
		\par\end{centering}
	\begin{centering}
		\subfloat[On $\Omega_{1}^{T}\cup\Omega_{4}^{T}$]{\includegraphics[width=0.47\columnwidth]{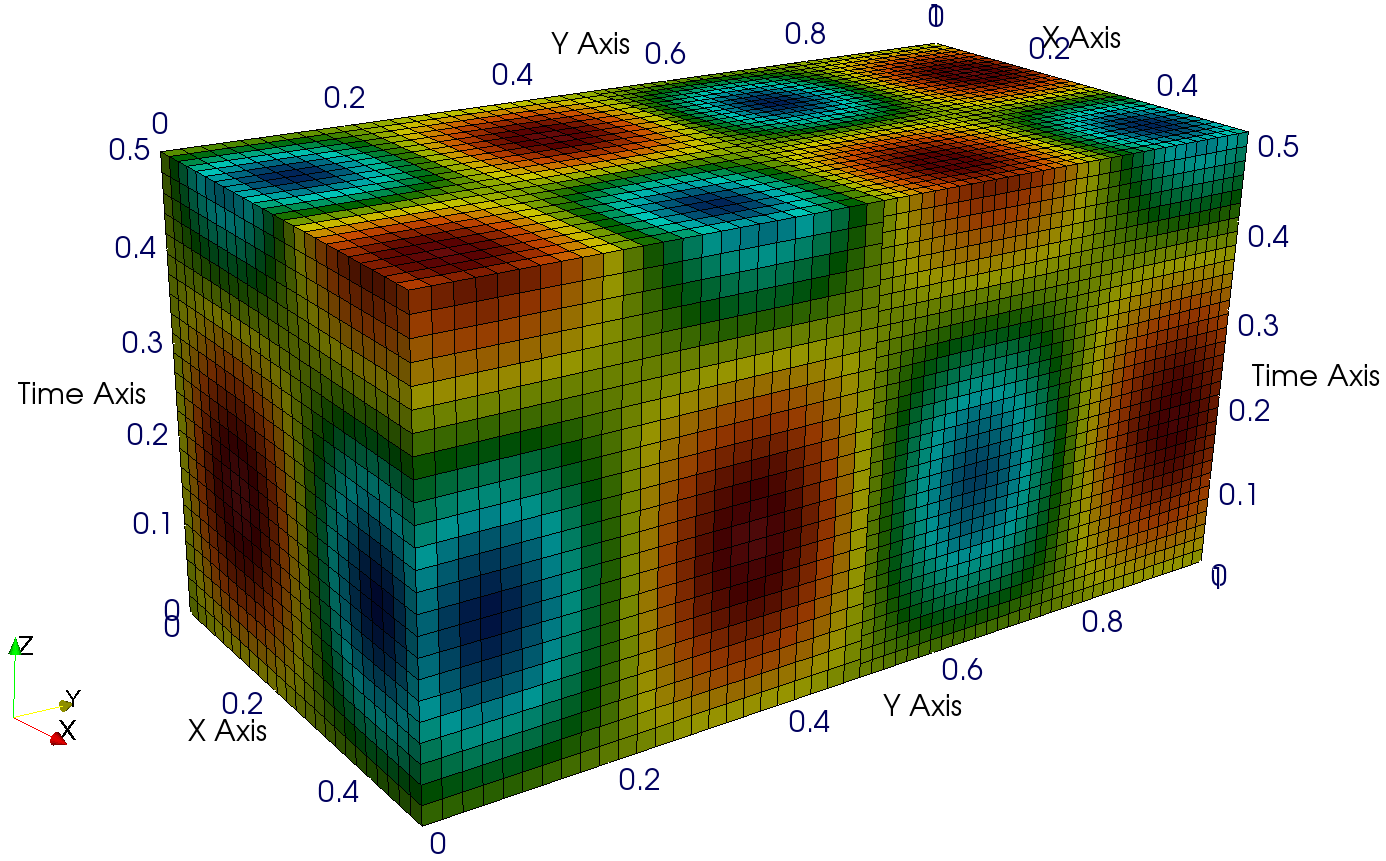}
			
		}\subfloat[On $\Omega_{2}^{T}\cup\Omega_{3}^{T}$]{\includegraphics[width=0.53\columnwidth]{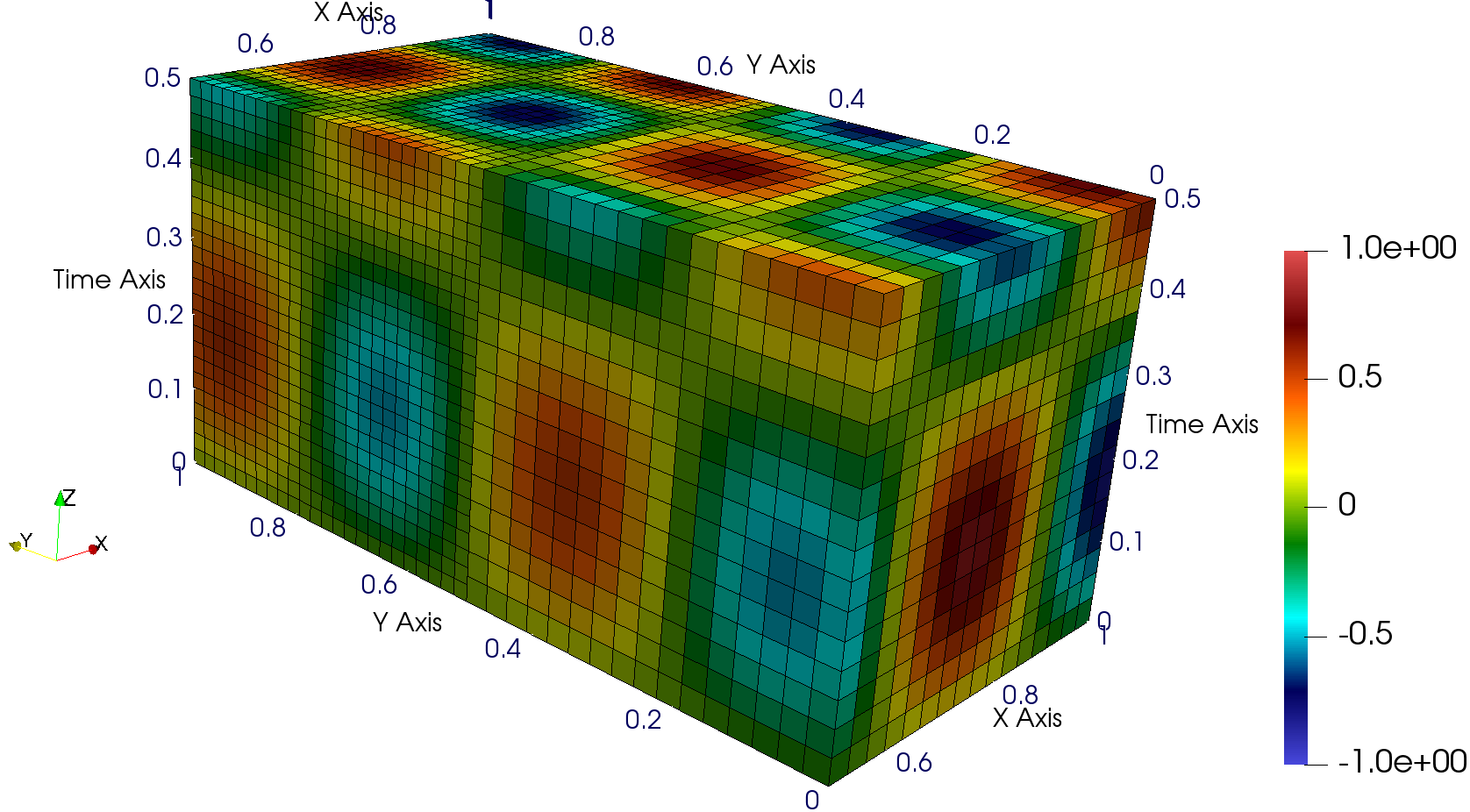}}
		\par\end{centering}
	\caption{Example 1, pressure computed using bilinear mortars shown on the
		space-time grid at refinement 3. \label{fig:Example-1:-Pressure}}
\end{figure}

\begin{figure}
	\begin{centering}
		\subfloat[On $\Omega_{1}^{T}\cup\Omega_{4}^{T}$]{\includegraphics[width=0.47\columnwidth]{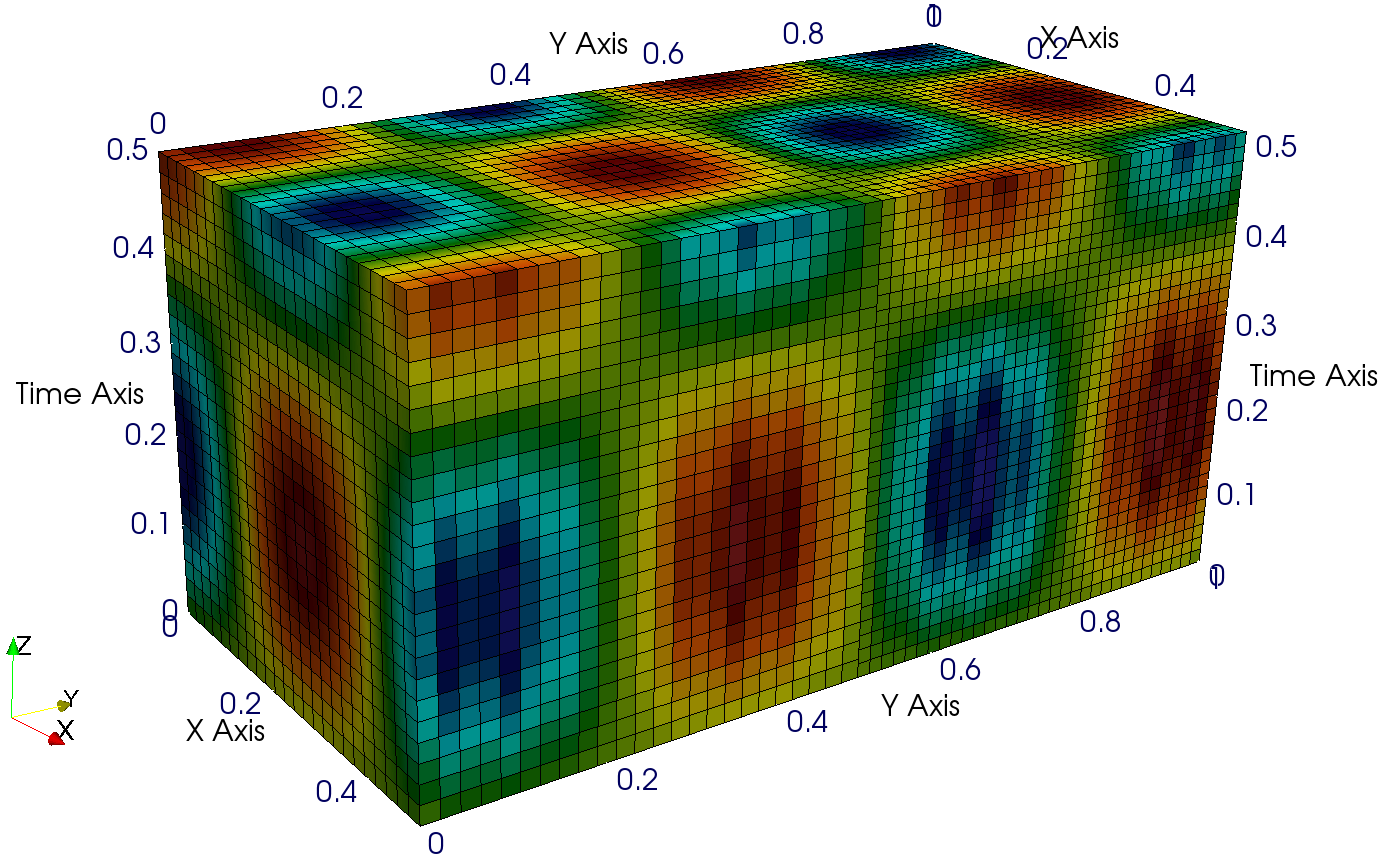}
			
		}\subfloat[On $\Omega_{2}^{T}\cup\Omega_{3}^{T}$]{\includegraphics[width=0.53\columnwidth]{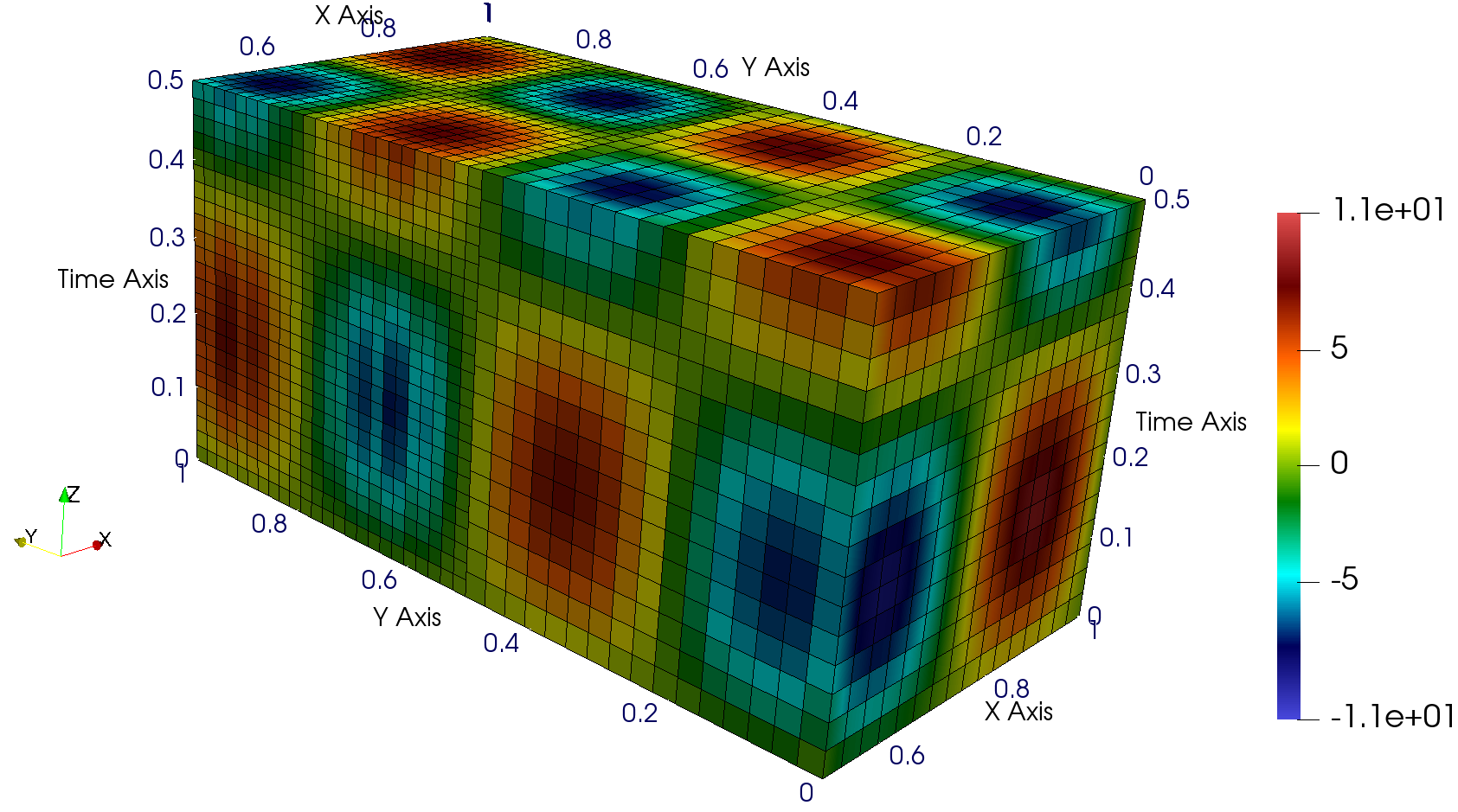}}
		\par\end{centering}
	\caption{Example 1, $x$-component of velocity computed using bilinear
		mortars shown on the space-time grid at refinement 3.}
\end{figure}

\begin{figure}
	\begin{centering}
		\subfloat[On $\Omega_{1}^{T}\cup\Omega_{4}^{T}$]{\includegraphics[width=0.47\columnwidth]{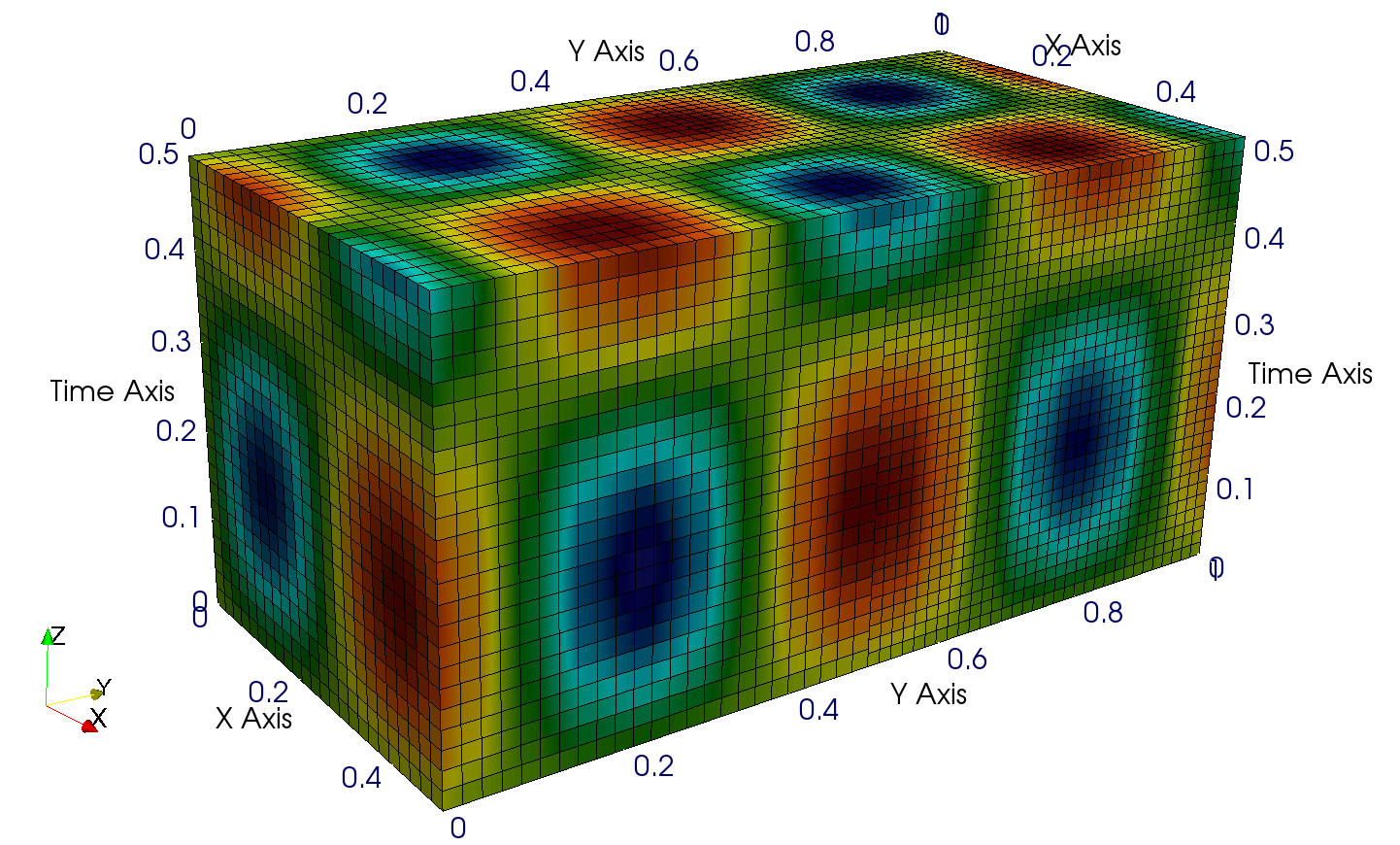}
			
		}\subfloat[On $\Omega_{2}^{T}\cup\Omega_{3}^{T}$]{\includegraphics[width=0.53\columnwidth]{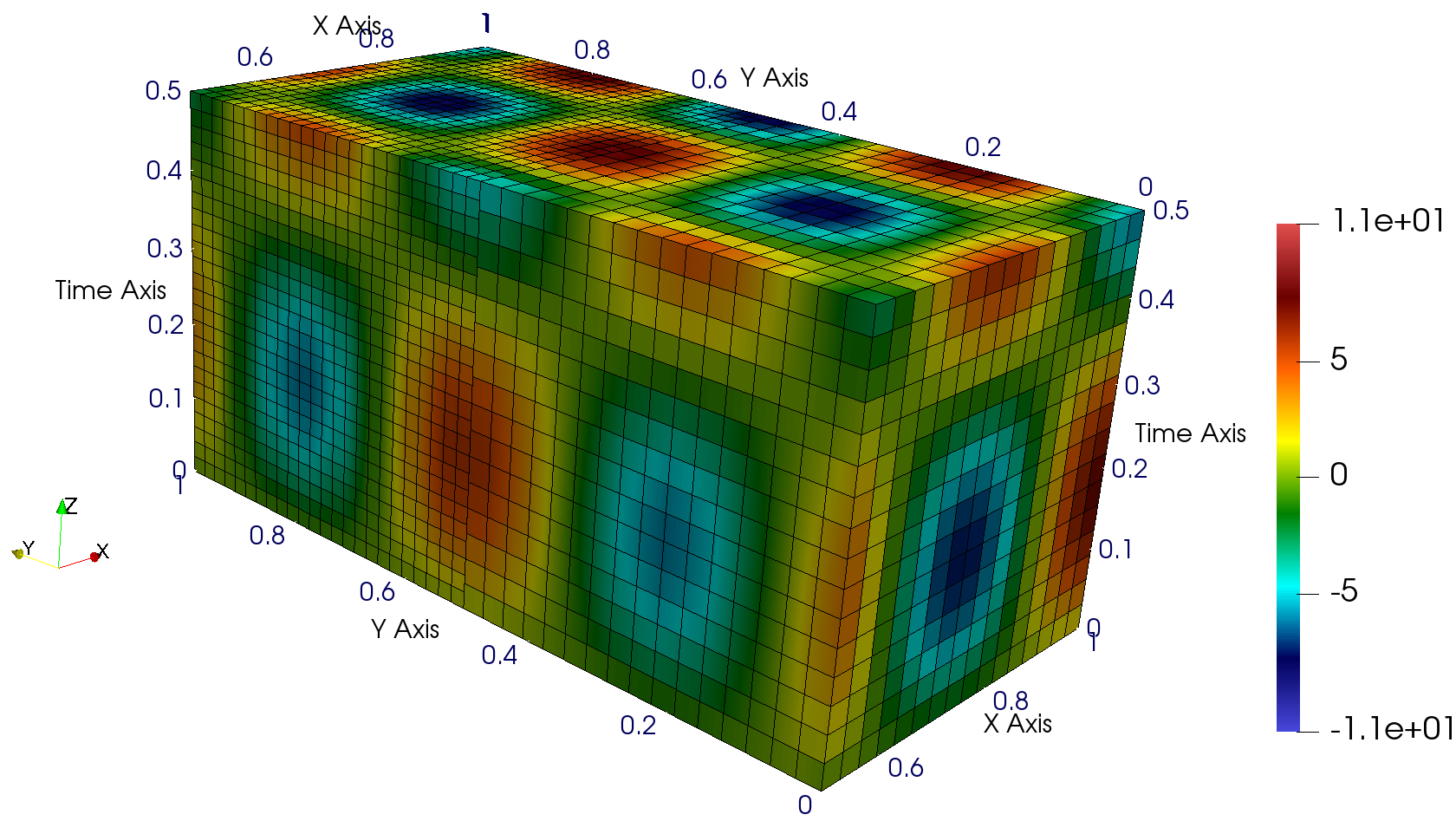}}
		\par\end{centering}
	\caption{Example 1, $y$-component of velocity computed using bilinear
		mortars shown on the space-time grid at refinement 3. \label{fig:Example-1:-ycomponent}}
\end{figure}

We partition the space domain $\Omega$ into four identical
squares $\Omega_i$, $i=1,2,3,4$, with $\Omega_1 = (0,0.5)\times(0,0.5)$, $\Omega_2 = (0.5,1)\times(0,0.5)$, $\Omega_3 = (0,0.5)\times(0.5,1)$, and
$\Omega_4 = (0.5,1)\times(0.5,1)$. The space-time domain $\Omega^{T}$ is
correspondingly partitioned into four space-time subdomains $\Omega_{i}^{T}$, $i=1,2,3,4$.
We start with an initial grid for each $\Omega_{i}^{T}$ and $\Gamma_{ij}^{T}$
and refine it successively 4 times to test the convergence rate of
the solutions with respect to the actual known solution. The subdomains $\Omega_{i}^{T}$ maintain a checkerboard non-matching mesh structure throughout the refinement cycles. In particular, let $n_i$ be number of elements in either the $x$ or $y$-directions and let $N_i$ be the number of elements in the $t$-direction in subdomain $\Omega_{i}^{T}$. The initial grids are chosen as $n_1 = N_1 = 3$, $n_2 = N_2 = 2$, $n_3 = N_3 = 4$, and $n_4 = N_4 = 3$, see Table~\ref{tab:Example-1:-Mesh}. Note that $h = \Dt$.
In the case of bilinear mortars, we maintain $H=2h$ and $\Delta T=2\Dt$, halving the mesh sizes on each refinement cycle. For biquadratic mortars, we start with $H=2h$ and $\Delta T=2\Dt$, but refine the mortar mesh only every other time to maintain $H=\sqrt{h}$ and $\Delta T=\sqrt{\Dt}$.
The coarser mesh on $\Gamma^{T}$ in the biquadratic mortar case $DGQ_{2}$ is compensated by the higher degree of the space. In particular, the last term on the right hand side in the bound \eqref{error-estimate} from Theorem~\ref{conv-thm} gives $\cO(h^{-\frac12}H^{m+1})$. With $m=1$ and $H = \cO(h)$, this results in $\cO(h^{\frac32})$, while with $m=2$ and $H = \cO(h^{\frac12})$, it gives $\cO(h)$. Similarly, the last term in the bound
\eqref{error-estimate-tilde} from Theorem~\ref{conv-thm-tilde} gives
$\cO(H^{m+\frac12})$. With $m=1$ and $H = \cO(h)$,
this results in $\cO(h^{\frac32})$, while with $m=2$ and $H = \cO(h^{\frac12})$, it gives $\cO(h^\frac54)$. In all cases, the order is no smaller than the optimal order $\cO(h)$ with respect to the $RT_{0}\times DGQ_{0}$ finite element
spaces. More details on the mesh refinement are given in Table~\ref{tab:Example-1:-Mesh}.
There we also report the number of spatial degrees of freedom of the spaces $RT_{0}\times DGQ_{0}$ on $\Omega_{i}$, as well as the number of space-time degrees of freedom of the mortar space $DGQ_{1}$ or $DGQ_{2}$
on $\Gamma^{T}$.

In Tables~\ref{tab:Linear-mortar-convergence} and~\ref{tab:Quadratic-mortar-convergence} we report the 
relative errors with respect to the norm of the true solution, as well as the convergence rates as powers of the subdomain discretization parameters
$h$ and $\Dt$. We observe optimal first order convergence of the method
using both bilinear and biquadratic mortars. We note that the 
$\Dt^{-\frac12}$ loss in convergence rate in the bound from Theorem~\ref{conv-thm} is not observed in the numerical results. In Tables~\ref{tab:Linear-mortar-convergence} and~\ref{tab:Quadratic-mortar-convergence} we also report the
growth rate for the number of GMRES iterations in the case of bilinear and biquadratic mortars, respectively. We recall that Theorem \ref{thm:specbound}
bounds the spectral ratio of the interface operator $S$ by
$\mathcal{O}(h^{-1})$. Thus, up to deviation from a normal matrix, the
growth rate is expected to be $\mathcal{O}(h^{-0.5})$ \cite{kelley1995iterative,IpsenGMRES}. This is close to what we observe in Tables~\ref{tab:Linear-mortar-convergence} and~\ref{tab:Quadratic-mortar-convergence}.

We further compare the performance of bilinear and biquadratic mortars.
As expected, the accuracy of the two cases at the same refinement level is comparable, which is evident from Tables~\ref{tab:Linear-mortar-convergence} and~\ref{tab:Quadratic-mortar-convergence}. On the other hand, since the biquadratic mortar space $DGQ_{2}$ 
has far fewer degrees of freedom compared to bilinear mortar
space $DGQ_{1}$, the former results in a smaller number of GMRES iterations. Thus, choosing higher mortar degrees $m,s$ with a coarser mortar mesh results
in a computationally more efficient method compared to using smaller $m,s$ and a finer mortar mesh.

Finally, the computed solution is presented in three-dimensional space-time plots
in  Figures~\ref{fig:Example-1:-Pressure}--\ref{fig:Example-1:-ycomponent}. Note that the $z$-axis corresponds to time direction $t$. The plots clearly show the continuity of pressure and velocity, which is imposed in a weak sense, across the space-time interfaces. 

\subsection{Example 2: problem with a boundary layer}\label{sec:7.2} \label{multiscale_biot_example}

In this example, we demonstrate the advantages of applying the multiscale mortar space-time domain
decomposition method to a problem where the solution variables, pressure and velocity, vary on
different scales across the space-time domain. For this, we use the known solution,
$p(x,y,t)=1000xyt \, e^{-10\left(x^{2}+y^{2}+\frac{1}{4}t^{2}\right)}$ along with permeability
$K=I_{2\times2}$ to determine the right-hand side $q$ in~\eqref{parabolic} and impose Dirichlet boundary condition and initial condition on the space-time domain $\Omega^{T}=(0,1)^{2}\times(0,0.5)$. By
construction, $p(x,y,t)$ varies rapidly in both space and time along the lower-left corner of $\Omega^{T}$, forming a sharp boundary layer. The pressure decays exponentially away from this corner and is close to zero over a large part of the domain. This calls for an efficient multiscale method that
would take advantage of the multiscale nature of the problem and provide better resolution around the lower-left corner compared to the rest of $\Omega^{T}$.

We partition $\Omega^{T}$ into $4\times4$ identical space-time subdomains
$\Omega^{T}_{i}$. From the knowledge about the variation of the true pressure, we design a multiscale space-time grid on $\Omega^{T}$, where the refinement of the grid on each $\Omega^{T}_{i}$ is proportional to the amount of pressure variation. The finest mesh on $\Omega^{T}_{i}$ has
$h = 1/128$ and $\Dt = 1/64$, and the coarsest mesh on $\Omega^{T}_{i}$ has
$h = 1/8$ and $\Dt = 1/8$, see Figures
\ref{fig:ST:Example-2:pressure_left_right}--\ref{fig:ST:Example-2:pressure_top_bottom} for the
mesh refinement. The coarser meshes on the majority of the space-time subdomains reduce the computational complexity of the subdomain solves associated with them. We use a linear mortar ($m=s=1$) on the subdomain interfaces. The mortar mesh sizes in space are chosen as follows. For vertical interfaces (fixed $x$) between subdomains on the bottom row, the one along the boundary layer, we set $H = 1/32$. For the next row of subdomains we set $H = 1/16$, and for the other two rows, $H = 1/8$. Similarly, for the horizontal interfaces (fixed $y$) between subdomains on the left column we set $H = 1/32$, for the second column, $H = 1/16$, and for the other two columns, $H = 1/8$.
We choose $\Delta T=1/8$ on all interfaces.  These choices guarantee that the mortar assumption \eqref{mortar-assumption} is satisfied and that the dimension of the interface problem is reduced, while at the same time provide suitable resolution to enforce weakly flux continuity across the space-time 
interfaces. For comparison, we solve the problem using a uniformly fine and matching subdomain mesh with
$h=H=1/128$ and $\Dt=\DT=1/64$. A comparison of the number of GMRES iterations and the relative
errors from the multiscale and the fine-scale methods is given in
Table~\ref{table:ST:Example-2:multiscale}. It shows that the
multiscale and the fine-scale solutions attain comparable accuracy.
We observe slightly smaller relative errors in the fine-scale case because of the matching grids and higher resolution
throughout the space-time domain. The slightly higher errors
for the multiscale method are compensated by the cheaper subdomain
solves and smaller interface problem that converges in fewer iterations compared to the fine-scale method.

\renewcommand{\tabcolsep}{4.4pt}
\begin{center}
\begin{table}[h]
  \caption{Example 2, errors and GMRES iterations for the multiscale and fine-scale methods
    \label{table:ST:Example-2:multiscale}}
\begin{centering}
\begin{tabular}{c|c|c|c|c|c}
\hline
\multicolumn{1}{c|}{Method} &
\multicolumn{1}{|c|}{{\small{}\# GMRES}} & \multicolumn{1}{c|}{{\small{}$\|\u-\u_h^{\Dt}\|_{L^{2}(0,T;{\bf L}^{2}(\Omega))}$}} & \multicolumn{1}{c|}{{\small{}$\|p-p_h^{\Dt}\|_{\DG}$}} & \multicolumn{1}{c|}{{\small{$\|p-p_h^{\Dt}\|_{L^{2}(0,T;W)}$}}} & \multicolumn{1}{c}{{\small{}$\|\lambda-\lambda_H^\DT\|_{L^2(0,T;\Lambda_H)}$}}\tabularnewline
\hline
multiscale & 102 & 5.657e-02 & 8.425e-02 & 6.319e-02 & 5.796e-02 \tabularnewline
\hline
fine-scale & 140 & 1.524e-02 & 2.234e-02  & 2.154e-02 & 3.016e-02\tabularnewline
\hline
\end{tabular}
\par\end{centering}
\end{table}
\par\end{center}

\begin{figure}[h]
	\begin{raggedright}
		\includegraphics[width=0.43\columnwidth]{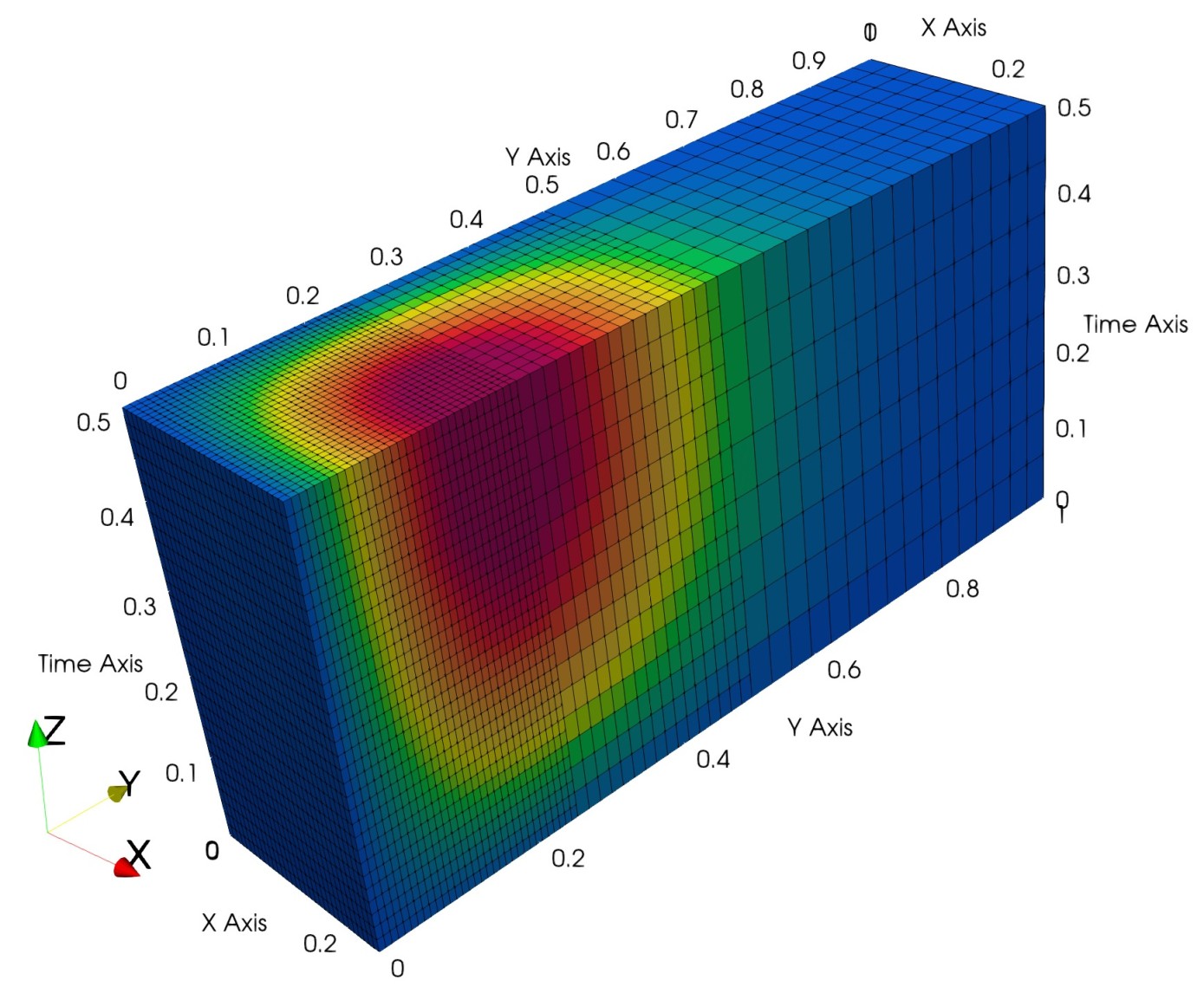}\includegraphics[width=0.57\columnwidth]{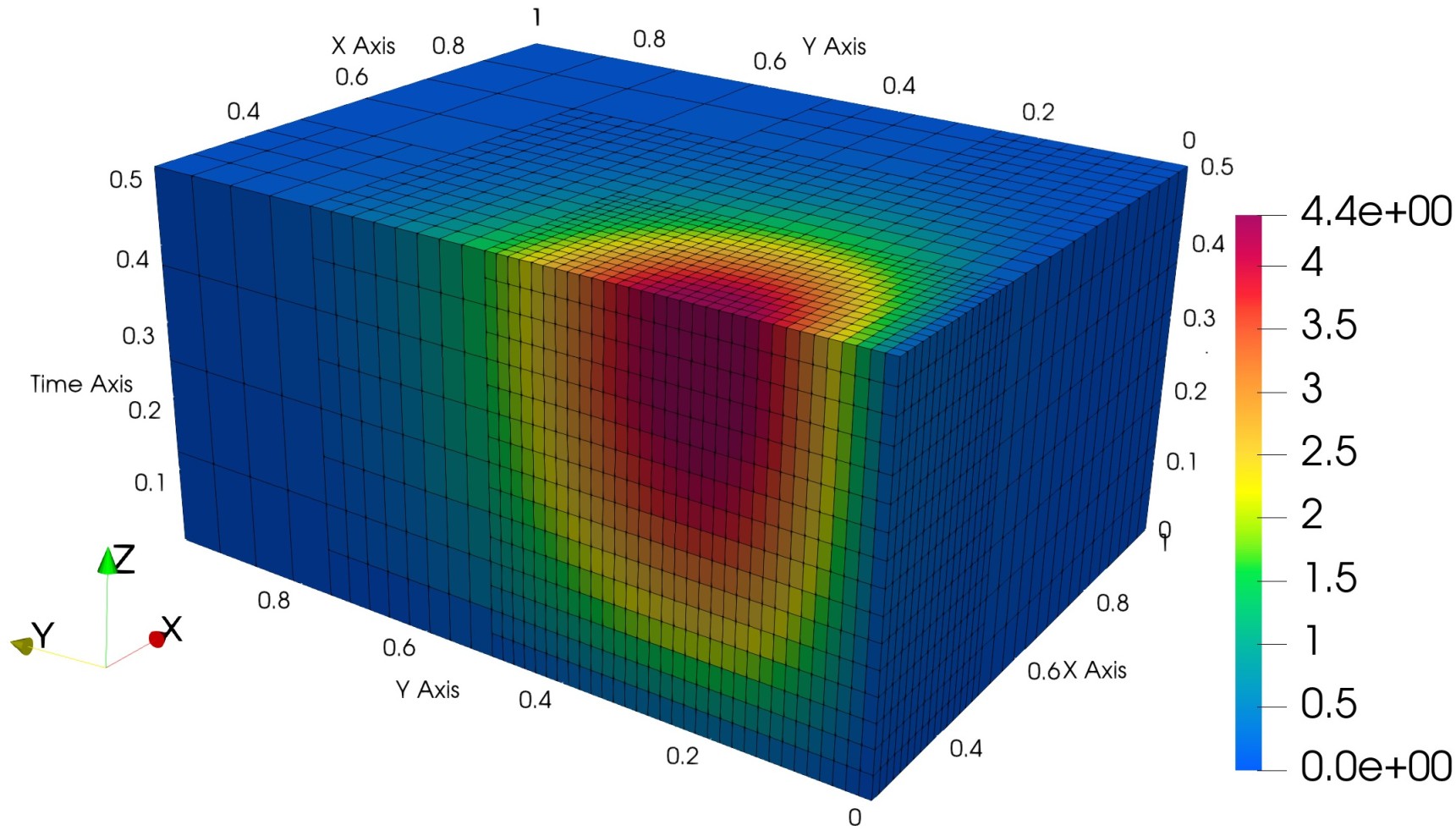}
\par\end{raggedright}
\caption{Example 2, pressure from the multiscale method, cut along the plane $x=0.25$.}
\label{fig:ST:Example-2:pressure_left_right}
\end{figure}

\begin{figure}[h]
\begin{minipage}[c]{0.5\columnwidth}%
		\includegraphics[width=1\columnwidth]{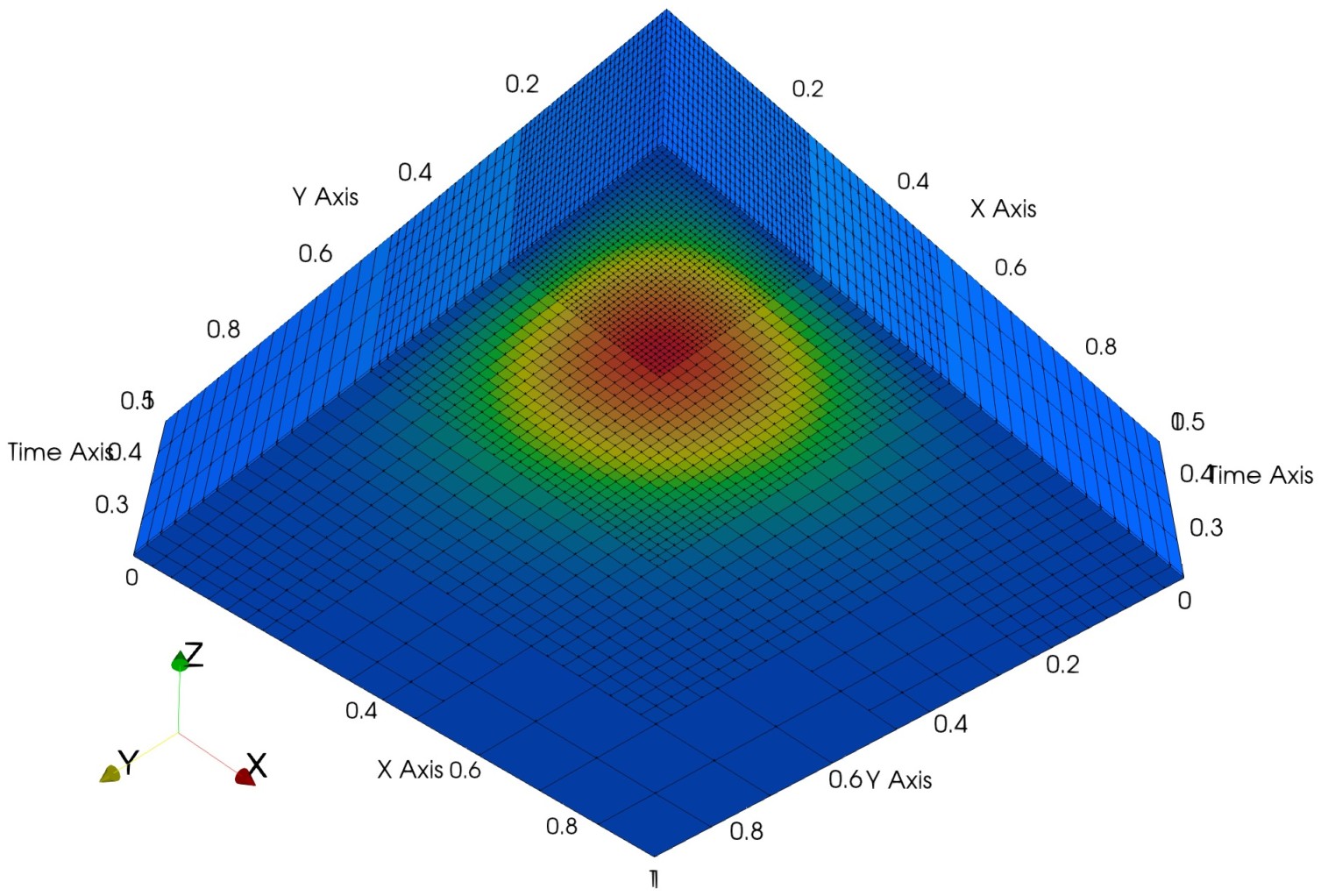}
		
		\includegraphics[width=1\columnwidth]{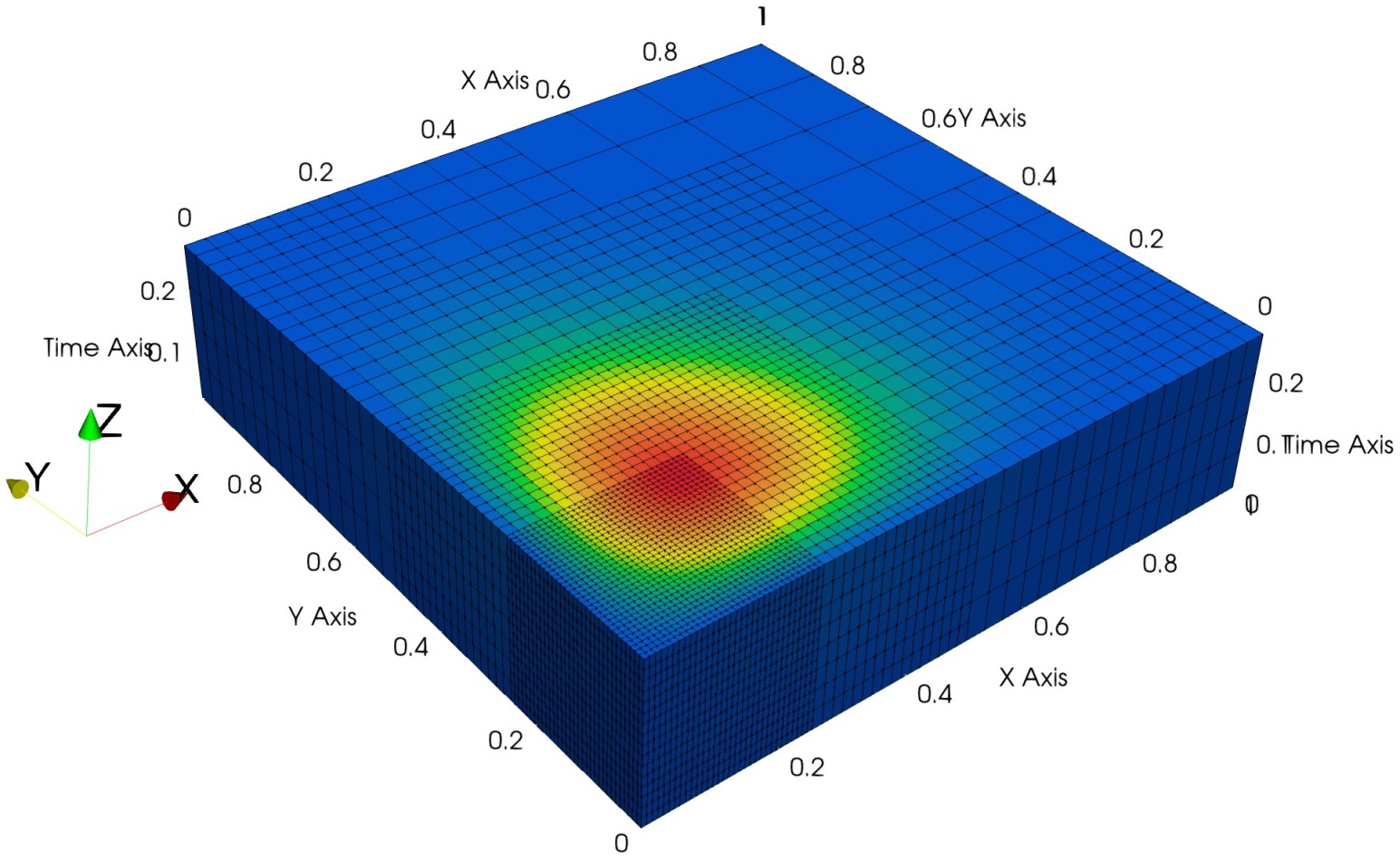}%
\end{minipage}%
\begin{minipage}[c]{0.5\columnwidth}%
\includegraphics[width=1\columnwidth]{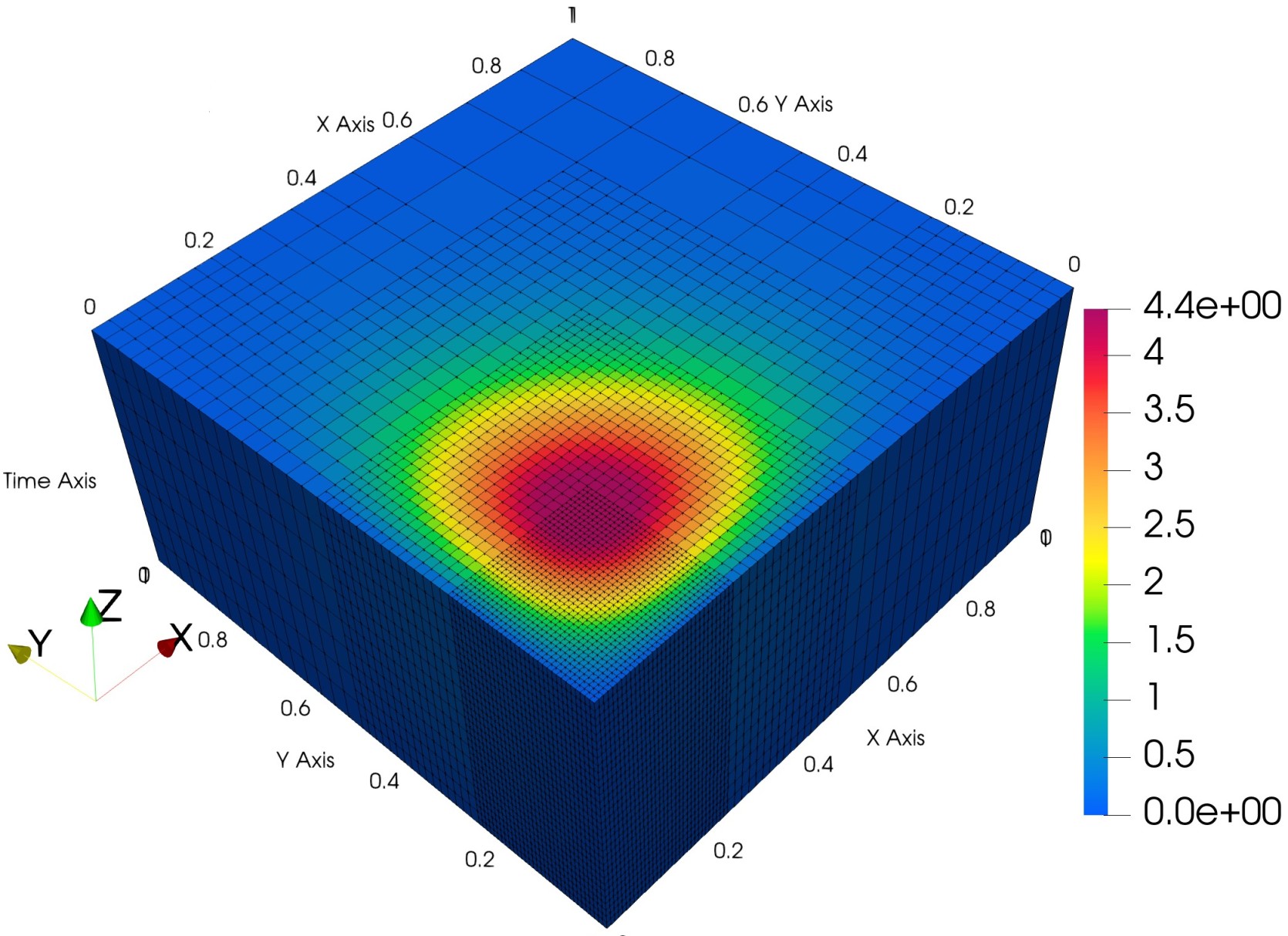}%

\includegraphics[width=1\columnwidth]{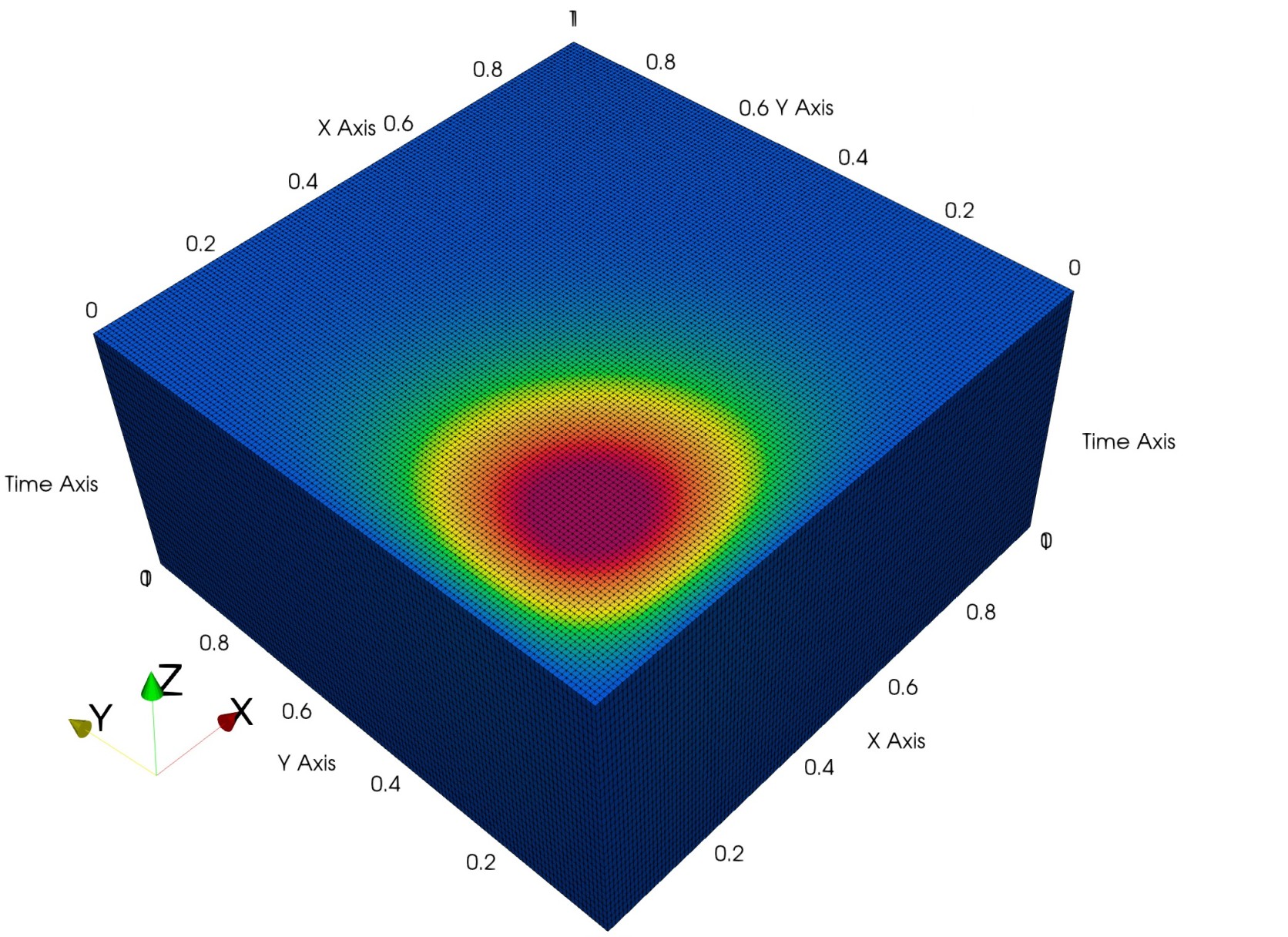}

\end{minipage}
	
\caption{Example 2, left: pressure from the multiscale method, cut along the plane $t=0.35$; right: pressure from the multiscale (top) and fine-scale (bottom) methods on the whole domain.}
\label{fig:ST:Example-2:pressure_top_bottom}
\end{figure}

\begin{figure}[h]
	\begin{raggedright}
		\includegraphics[width=0.42\columnwidth]{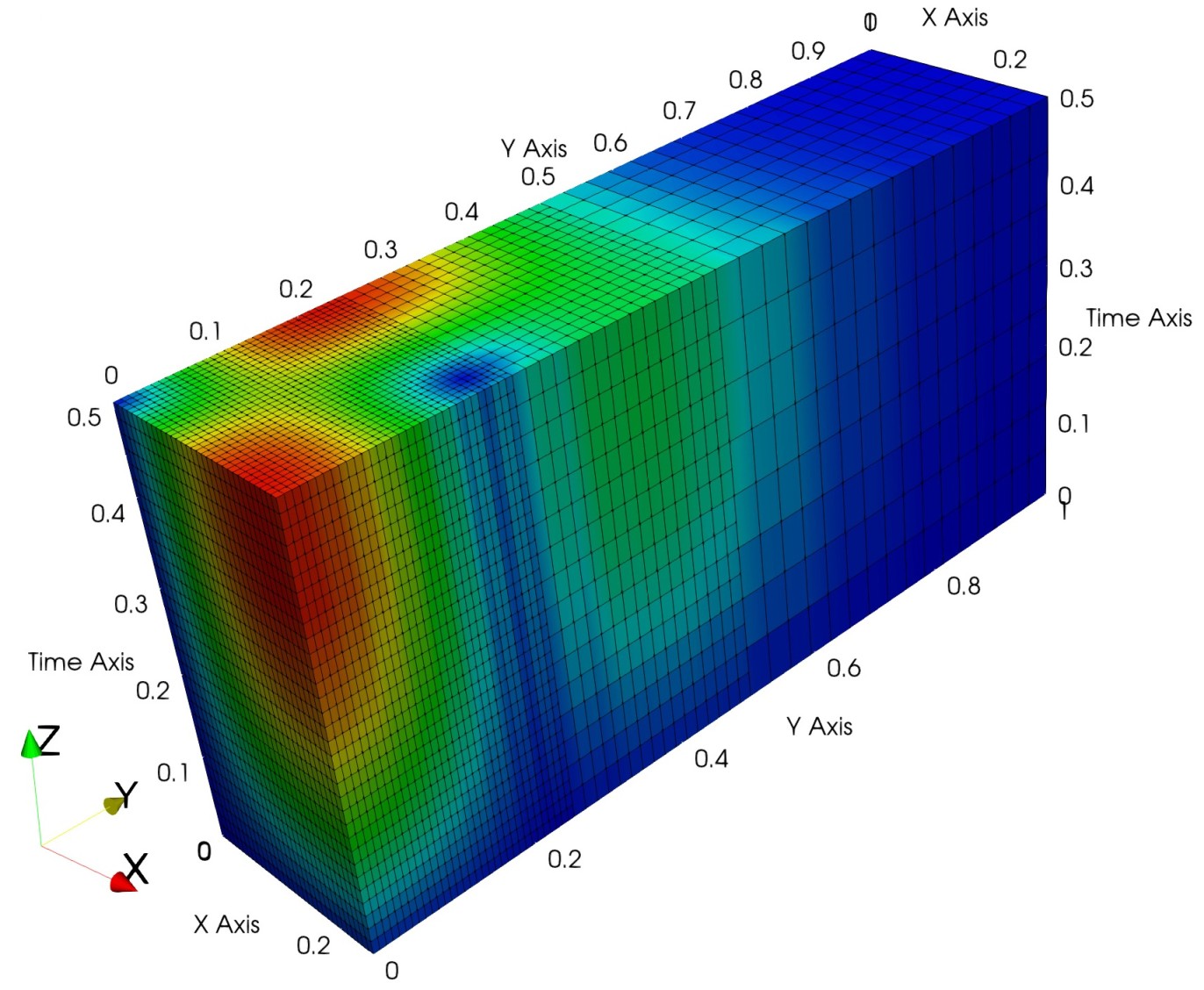}\includegraphics[width=0.55\columnwidth]{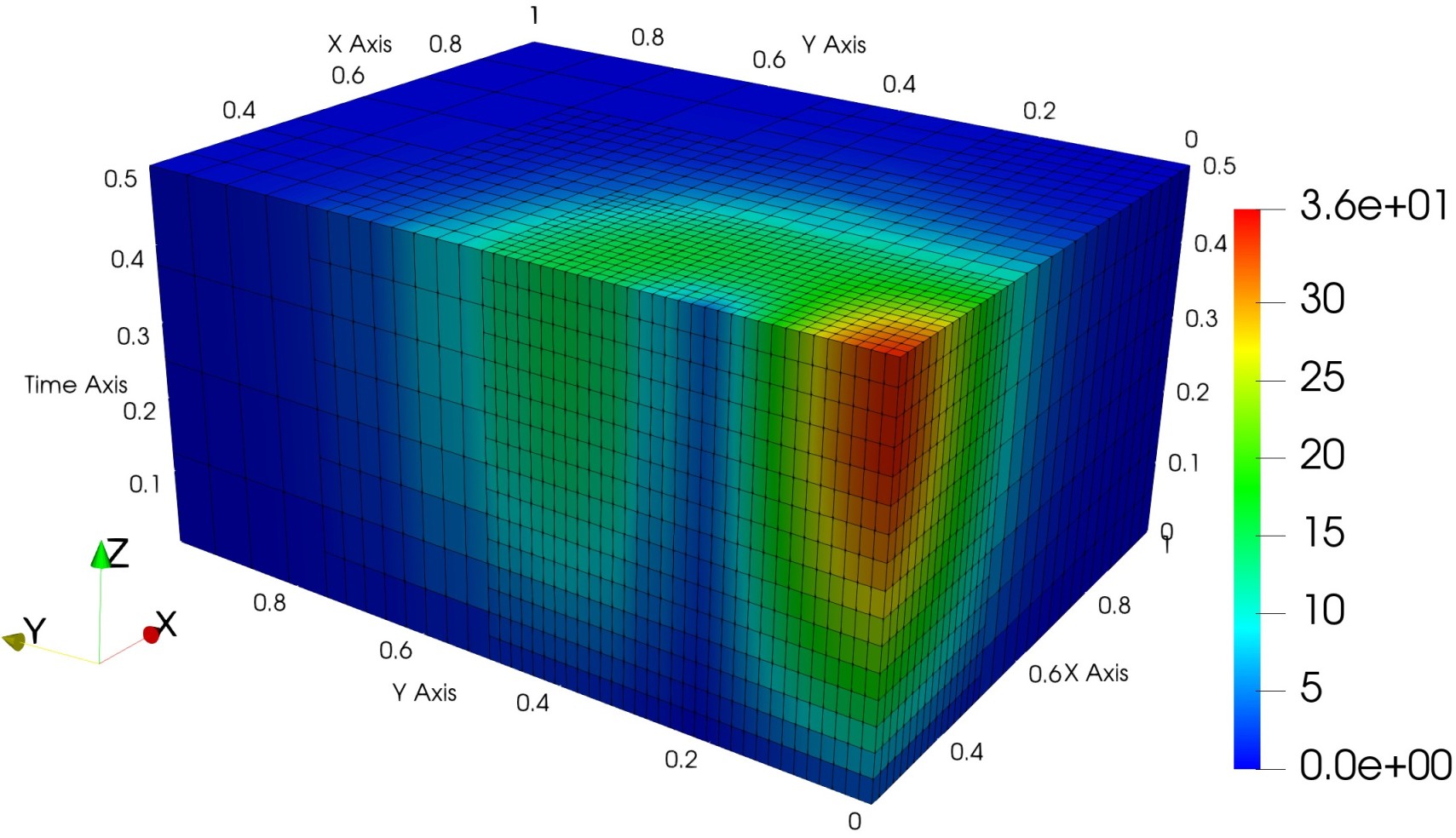}
\par\end{raggedright}
\caption{Example 2, velocity magnitude from the multiscale method, cut along the
  plane $x=0.25$.}
\label{fig:ST:Example-2:velocity_left_right}
\end{figure}

\begin{figure}[h]
	\begin{minipage}[c]{0.5\columnwidth}%
		\includegraphics[width=1\columnwidth]{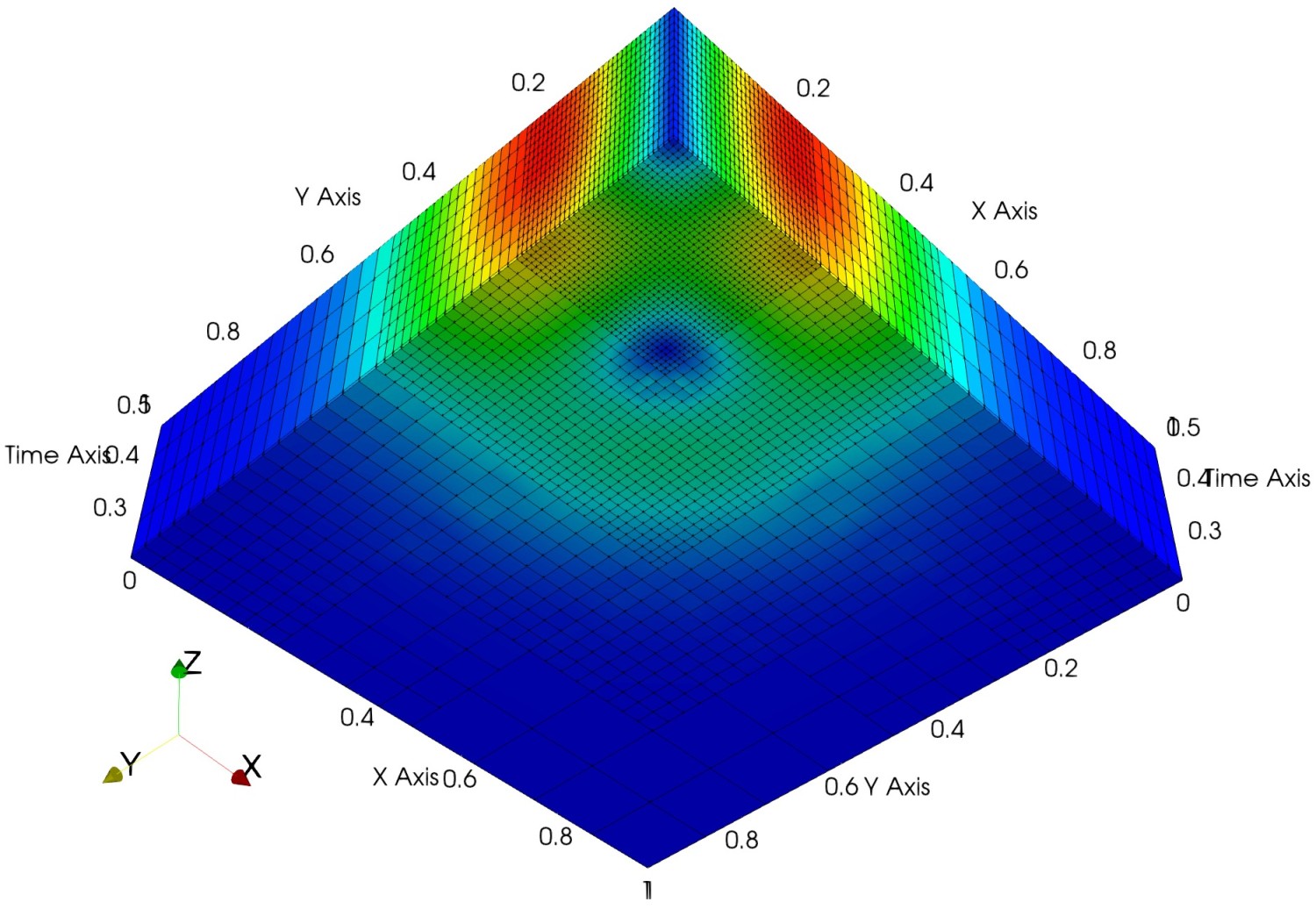}
		
		\includegraphics[width=1\columnwidth]{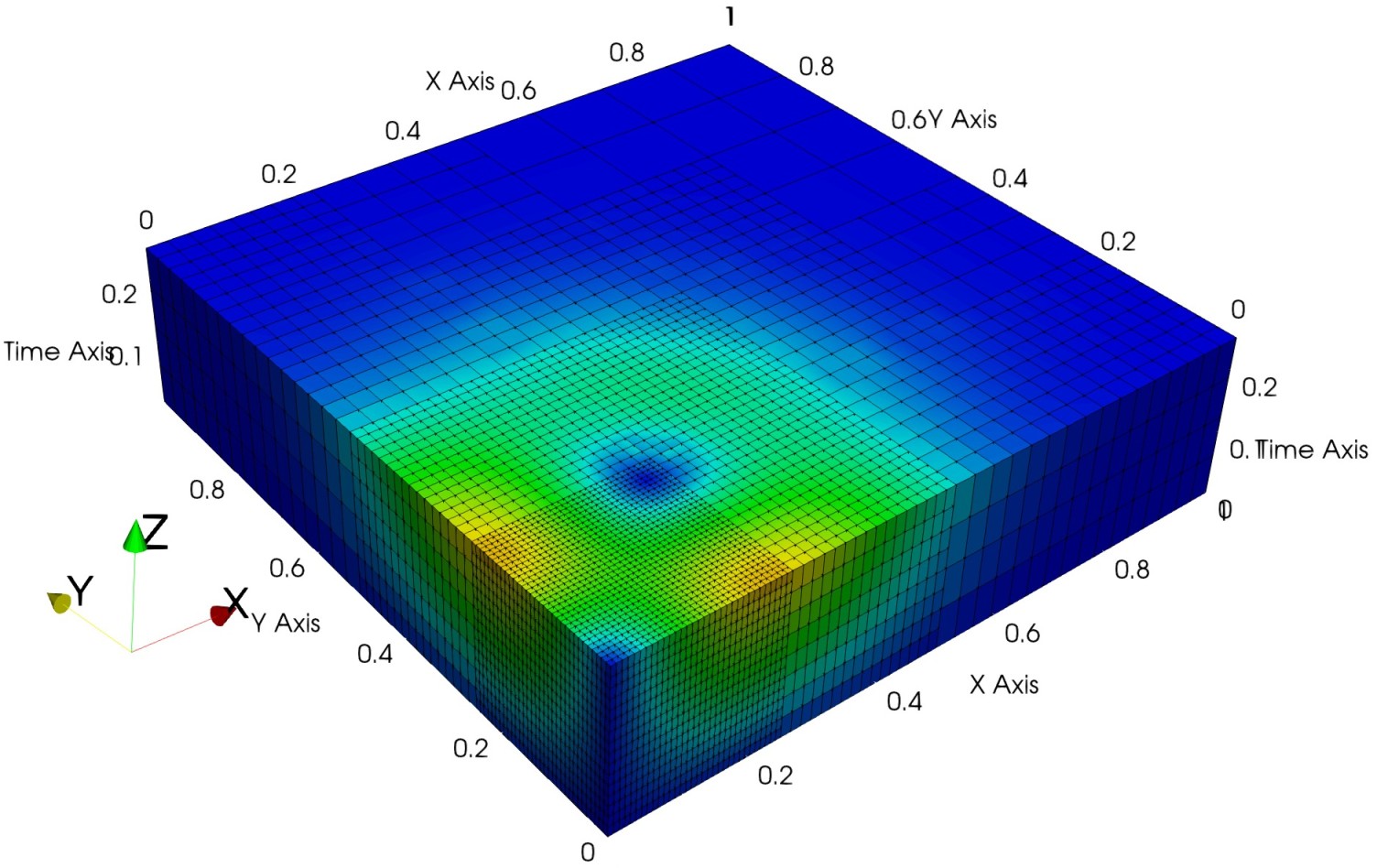}%
\end{minipage}
\begin{minipage}[c]{0.5\columnwidth}%
		\includegraphics[width=1\columnwidth]{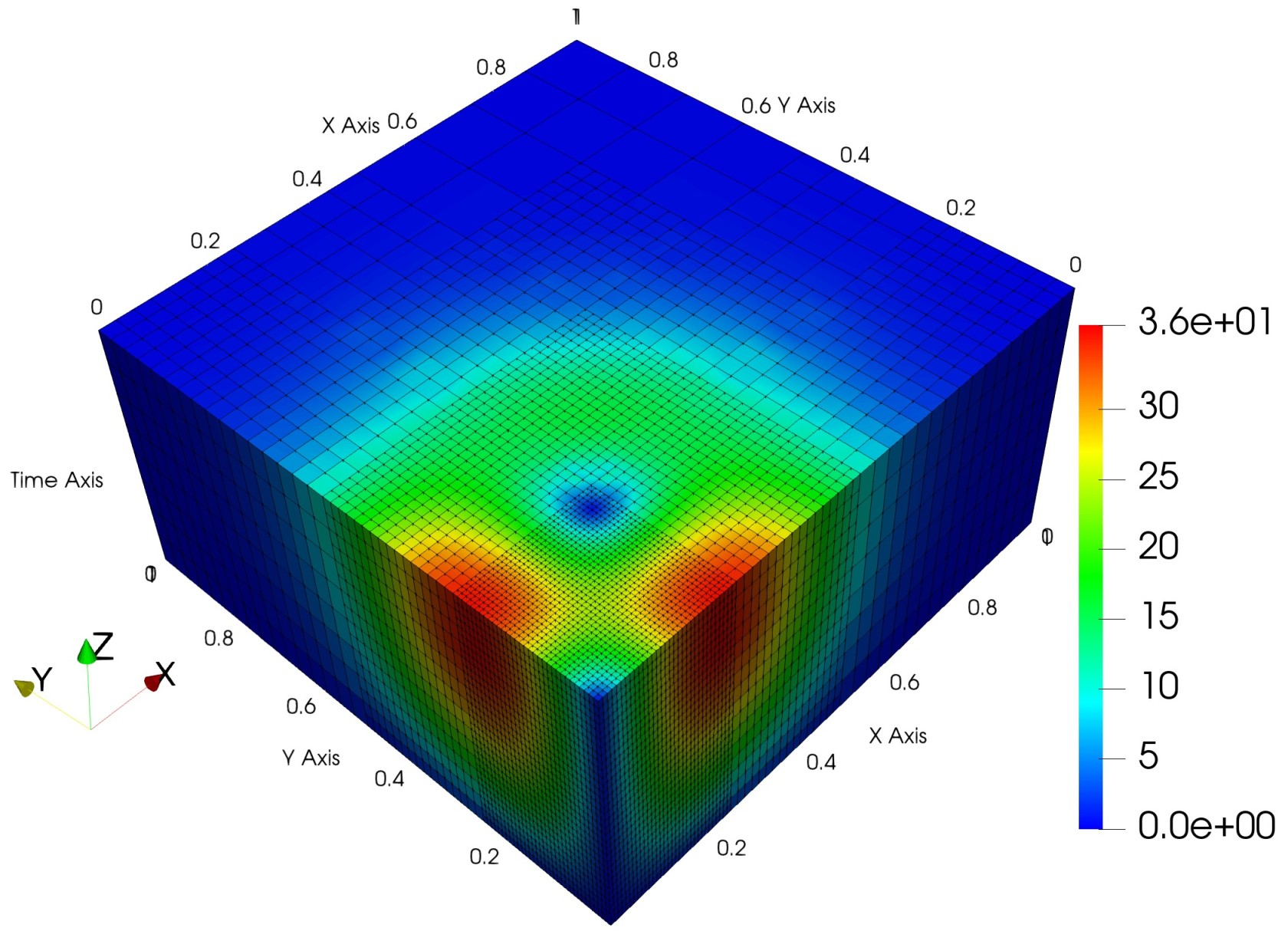}%

\includegraphics[width=1\columnwidth]{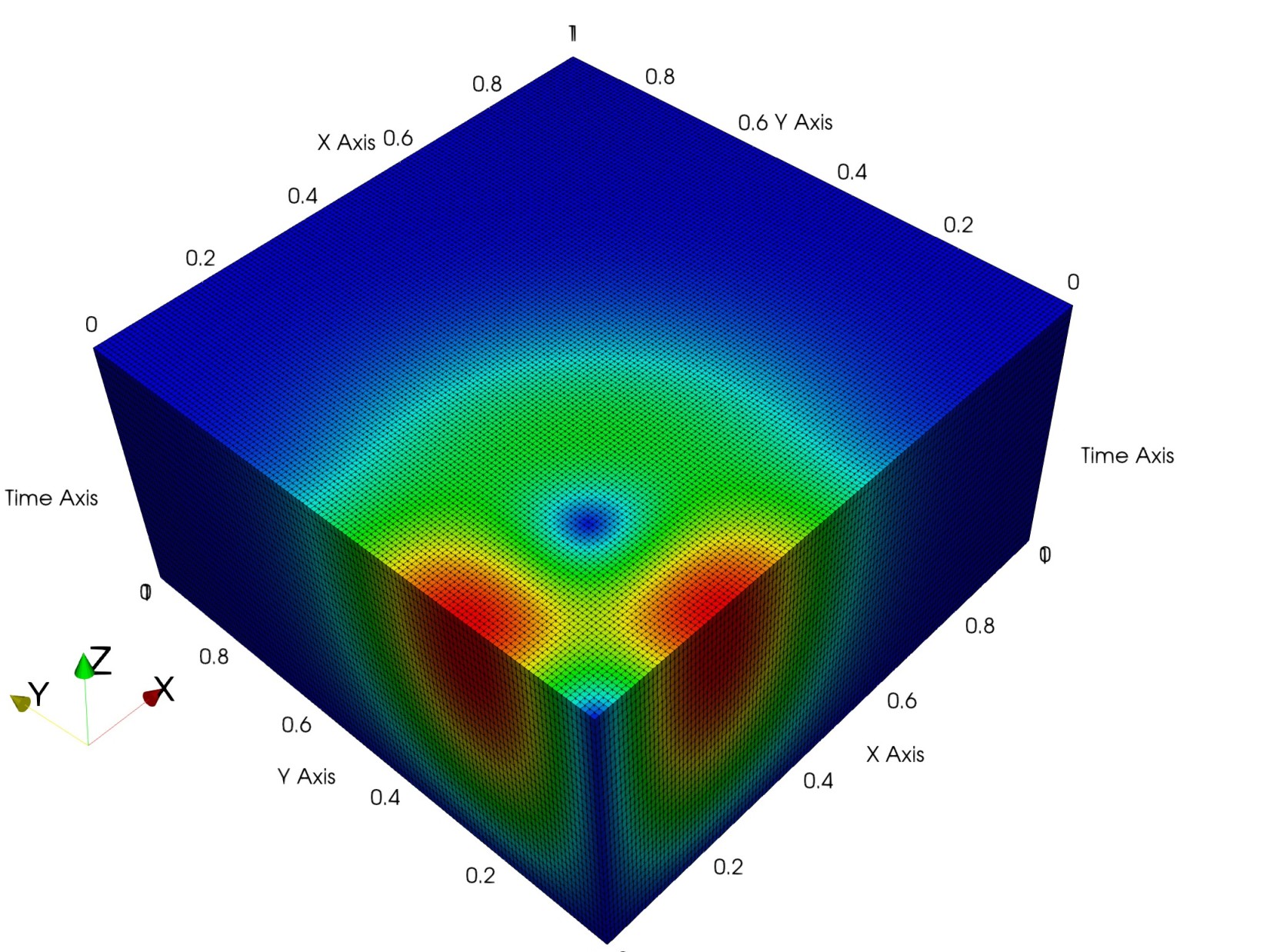}
\end{minipage}
	
\caption{Example 2, left: velocity magnitude from the multiscale method, cut along the plane $t=0.35$; right: velocity magnitude from the multiscale (top) and fine-scale (bottom) methods on the whole domain.}
\label{fig:ST:Example-2:velocity_top_bottom}
\end{figure}

The computed multiscale solution is presented in
Figures~\ref{fig:ST:Example-2:pressure_left_right}--\ref{fig:ST:Example-2:velocity_top_bottom}. The
plots show that the multiscale method provides good resolution where
it matters -- in the regions with high solution variation. Moreover,
we observe very good enforcement of continuity of both pressure and
velocity across various space and time interfaces. Side-to-side
comparisons of the multiscale and fine-scale solutions are given on
the right sides in Figure~\ref{fig:ST:Example-2:pressure_top_bottom}
and Figure~\ref{fig:ST:Example-2:velocity_top_bottom}. They show
excellent agreement between the two solutions and once again confirm
that the less expensive multiscale method provides comparable accuracy
to the more expensive fine-scale method.

%\iftrue % Compiles Example 3 (followed by \fi)
%\iffalse % Does not compile Example 3 (followed by \fi)

%\fi

\section{Conclusions}\label{sec_concl}

We presented a space-time domain decomposition mixed finite element method for parabolic problems that allows for non-matching spatial grids and local time stepping via space-time mortar finite elements. Well-posedness and a priori error estimates were established. A parallel non-overlapping domain decomposition algorithm was developed, which reduces the algebraic problem to a coarse-scale interface problem for the mortar variable. The theoretical results and the flexibility of the method were illustrated in a series of numerical experiments. Future
work may include the development and analysis of a Neumann--Neumann preconditioner for the interface problem in Section~\ref{sec_red_int} as in~\cite{hoang2013space}, using techniques from \cite{PenYot}, as well as deriving a posteriori error estimates, possibly building upon the ideas from~\cite{Ern_Sme_Voh_heat_HO_Y_17, AliH_Japh_Kern_Voh_DD_MFE_18,Hassan-aposteriori}.

%\nocite{*}
\bibliographystyle{abbrv}
\bibliography{space-time}
\end{document}